\documentclass{article}

 \usepackage{amsmath}
  \usepackage{amsthm}
 \usepackage{amssymb}
    \usepackage[utf8]{inputenc}
      \usepackage[english]{babel}
         \usepackage{wrapfig}
           \usepackage{graphicx}
             \usepackage{geometry}
               \usepackage[rightcaption]{sidecap}
                  \usepackage{tikz}
                   \usepackage{bbm}
\usepackage{float}
\usepackage{amsfonts}
\usepackage{caption}
\geometry{a4paper}

\newcommand{\RN}[1]{\textup{\uppercase\expandafter{\romannumeral#1}}}
\newcommand{\Drq}{\mathcal{D}(r;q)}

\newcommand{\vgamma}{{\boldsymbol\gamma}}
\newcommand{\vbeta}{{\boldsymbol\beta}}
\newcommand{\vt}{{\bold t}}
\newcommand{\vn}{\bold n}

\newcommand{\spd}{(\sigma+\delta)}
\newcommand{\huone}{\hat u_1}
\newcommand{\hvone}{\hat v_1}
\newcommand{\FS}{\frac{v}{u_1-u}}
\newcommand{\phiisregular}{\phi\in C^1(\overline\Omega)\cap C^3(\overline\Omega\setminus(\overline\Gamma_{\text{sonic}}\cup\mathcal{C}))}

\newcommand{\linearequationofphi}{ a_{11}\phi_{\xi\xi}+2a_{12}\phi_{\xi\eta}+a_{22}\phi_{\eta\eta}=0}
\newcommand{\equationofu}{a_{11}u_{\xi\xi}+2a_{12}u_{\xi\eta}+a_{22}u_{\eta\eta}+a_{22}(\frac{a_{11}}{a_{22}})_\xi u_\xi+a_{22}(\frac{2a_{12}}{a_{22}})_\xi u_\eta=0}
\newcommand{\equationofv}{a_{11}v_{\xi\xi}+2a_{12}v_{\xi\eta}+a_{22}v_{\eta\eta}+a_{11}(\frac{a_{22}}{a_{11}})_\eta v_\eta+a_{11}(\frac{2a_{12}}{a_{11}})_\eta v_\xi=0}
 \newcommand{\Dn}{D_{\vn}}  
\newcommand{\ft}{{f^{(t)}}}\newcommand{\rt}{{r^{(t)}}}

\newcommand{\G}{\Gamma}
\newcommand{\Gammawedgeminus}{$\Gamma_{\text{wedge}}^-$}
\newcommand{\Gammawedgeplus}{$\Gamma_{\text{wedge}}^+$}
\newcommand{\gammawedgeminus}{\Gamma_{\text{wedge}}^-}
\newcommand{\gammawedgeplus}{\Gamma_{\text{wedge}}^+}
\newcommand{\Gammawedge}{$\Gamma_{\text{wedge}}$}

\newcommand{\gammawedge}{\Gamma_{\text{wedge}}}
\newcommand{\Gammawedgepm}{$\Gamma_{\text{wedge}}^{\pm}$}
\newcommand{\gammawedgepm}{\Gamma_{\text{wedge}}^{\pm} }
\newcommand{\Gammashock}{$\Gamma_{\text{shock}}$}
\newcommand{\gammashock}{\Gamma_{\text{shock}}}
\newcommand{\Gammashockplus}{$\Gamma_{\text{shock}}^+$}

\newcommand{\Gammashockpm}{$\Gamma_{\text{shock}}^\pm$}
\newcommand{\Gammasonic}{$\Gamma_{\text{sonic}}$}
\newcommand{\Gammasonicplus}{$\Gamma_{\text{sonic}}^+$}
\newcommand{\Gammasonicminus}{$\Gamma_{\text{sonic}}^-$}
\newcommand{\Gammasonicpm}{$\Gamma_{\text{sonic}}^\pm$}
\newcommand{\gammasonic}{\Gamma_{\text{sonic}}}
\newcommand{\gammasonicplus}{\Gamma_{\text{sonic}}^+}
\newcommand{\hgammasonicplus}{{\hat\Gamma}_{\text{sonic}}^+}
\newcommand{\gammasonicminus}{\Gamma_{\text{sonic}}^-}

\newcommand{\Statetwobar}{{State$(\overline{\RN{2}})$}}
\newcommand{\Stateone}{State$(\RN{1})$}
\newcommand{\Statetwo}{State$(\RN{2})$}
\newcommand{\Statetwoplus}{State$(\RN{2}_+)$}
\newcommand{\Statetwominus}{State$(\RN{2}_-)$}
\newcommand{\statetwominus}{\text{State}(\RN{2}_-)}
\newcommand{\Statetwopm}{State$(\RN{2}_\pm)$}
\newcommand{\Statecero}{State$(0)$}


\newcommand{\vecp}{p}
\newcommand{\vecq}{q}

\newcommand{\tvphi}{\tilde\varphi}
\newcommand{\tpsi}{\hat\psi}
\newcommand{\tphi}{\tilde\phi}

\newcommand{\rhot}{{\rho^{(t)}}}
\newcommand{\hrho}{\hat\rho}
\newcommand{\hrhoone}{{{\rho_1}}}
\newcommand{\tf}{\hat f}
\newcommand{\trho}{\tilde\rho}
\newcommand{\widetilderhoone}{{\rho_1}}

\newcommand{\varphit}{{\varphi^{(t)}}}
\newcommand{\bs}{\backslash}

\newcommand{\MC}{\mathcal{C}}
\newcommand{\MF}{\mathcal{F}}
\newcommand{\MD}{\mathcal{D}}
\newcommand{\MH}{\mathcal{H}}
\newcommand{\MU}{\mathcal{U}}

\newcommand{\MR}{\mathcal{R}}

\newcommand{\pMR}{\partial\MR}

\newcommand{\tMD}{\widetilde{\mathcal{D}}}
\newcommand{\pSMR}{\partial_S\MR}\newcommand{\pAMR}{\partial_A\MR}\newcommand{\pFMR}{\partial_F\MR}
\newcommand{\pAMU}{\partial_A\MU}\newcommand{\pFMU}{\partial_F\MU}

\newcommand{\pStMD}{\partial_S\tMD}\newcommand{\pAtMD}{\partial_A\tMD}\newcommand{\pFtMD}{\partial_F\tMD}
\newcommand{\sdtc}{\sin\delta\tan\sigma}
\newcommand{\solveregion}{{{\mathbb{R}}^2\setminus W}}

\newtheorem*{notation}{Notation}

\newtheorem{remark}{Remark}
\newtheorem{definition}{Definition}
\newtheorem{problem}{Problem}
\newtheorem{theorem}{Theorem}

\newtheorem{proposition}{Proposition}
\newtheorem{lemma}{Lemma}[section]

\newcommand{\statementstarts}{\begin{equation}\text}
\newcommand{\statementends}{\end{equation}}
\newcommand{\longstatementstarts}{\begin{align}\text}
\newcommand{\longstatementends}{\end{align}}

  \title{ Loss of Regularity of Solutions to Shock Reflection Problems by a Non-symmetric Convex Wedge with Potential Flow Equations}
\author{Jingchen Hu}
\begin{document}
\maketitle

\begin{abstract}
  In this paper, we study the problem of shock reflection by a wedge, with the potential flow equation, which is a simplification of the Euler System.

  In \cite{CF} and \cite{CFbook}, the existence theory of shock reflection problems with the potential flow equation was established, when the wedge is symmetric w.r.t. the direction of the upstream flow. As a natural extension of \cite{CF} and  \cite{CFbook}, we study non-symmetric cases, i.e. when the direction of the upstream flow forms a nonzero angle with the symmetry axis of the wedge.

  The main idea of investigating the regularity of solutions to non-symmetric problems is to study the symmetry of the solution. Then difficulties arise such as free boundaries and degenerate ellipticity, especially when ellipticity degenerates on the free boundary. We developed an integral method to overcome these difficulties.
 
   Some estimates near the corner of wedge is also established, as an extension of G.Lieberman's work. 

    We proved that in non-symmetric cases, the ideal Lipschitz solution to the potential flow equation, which we call regular solution, does not exist. This suggests that the potential flow solutions to the non-symmetric shock reflection problem, should have some singularity which is not encountered  in symmetric case.
\end{abstract}

\section{Introduction}
In \cite{CF}, it was shown that with some simplifications we can reduce the shock reflection problem to a boundary value problem for a quasi-linear second order degenerate elliptic equation.  And in \cite{CF} the problem is rigorously solved when the shock is reflected by a symmetric convex wedge (the symmetric axis of the wedge is perpendicular to the shock).  It's natural to ask if the existence result holds in the non-symmetric case, i.e. when the symmetric axis of the wedge is not perpendicular to the shock. To answer this question, we define regular solutions to the problem, which are characterized by reasonable physical and mathematical properties. Then we derive a contradiction from the existence of such kinds of regular solutions (Theorem \ref{MainTheorem}). So this implies solutions to the non-symmetric problem  should have some type or types of singularity, which is not encountered in the symmetric case.

In this section, we first derive the potential flow equation from conservation laws, then we reduce the problem to a boundary value problem in a domain with a free boundary. Then after stating the existence results and introducing some notations, we define regular solutions to the problem. 

\subsection{The Potential Flow Equation}
 In this paper we study the phenomena of plane shock reflection by a wedge. Precisely, when a plane shock, with upstream state $(u_1,0;\rho_1)$ and downstream state $(0,0;\rho_0)$, hits a wedge $W$,
 \[W=\{(x_1,x_2)\mid-x_1 \cot(\sigma-\delta)<x_2<x_1 \cot (\sigma+\delta), x_1>0\}\] it experiences a reflection-diffraction process.  Here we denote the state with density $\rho$ and velocity $(u,v)$ by $(u,v;\rho)$.

\begin{figure}\centering
    \includegraphics[height=6cm]{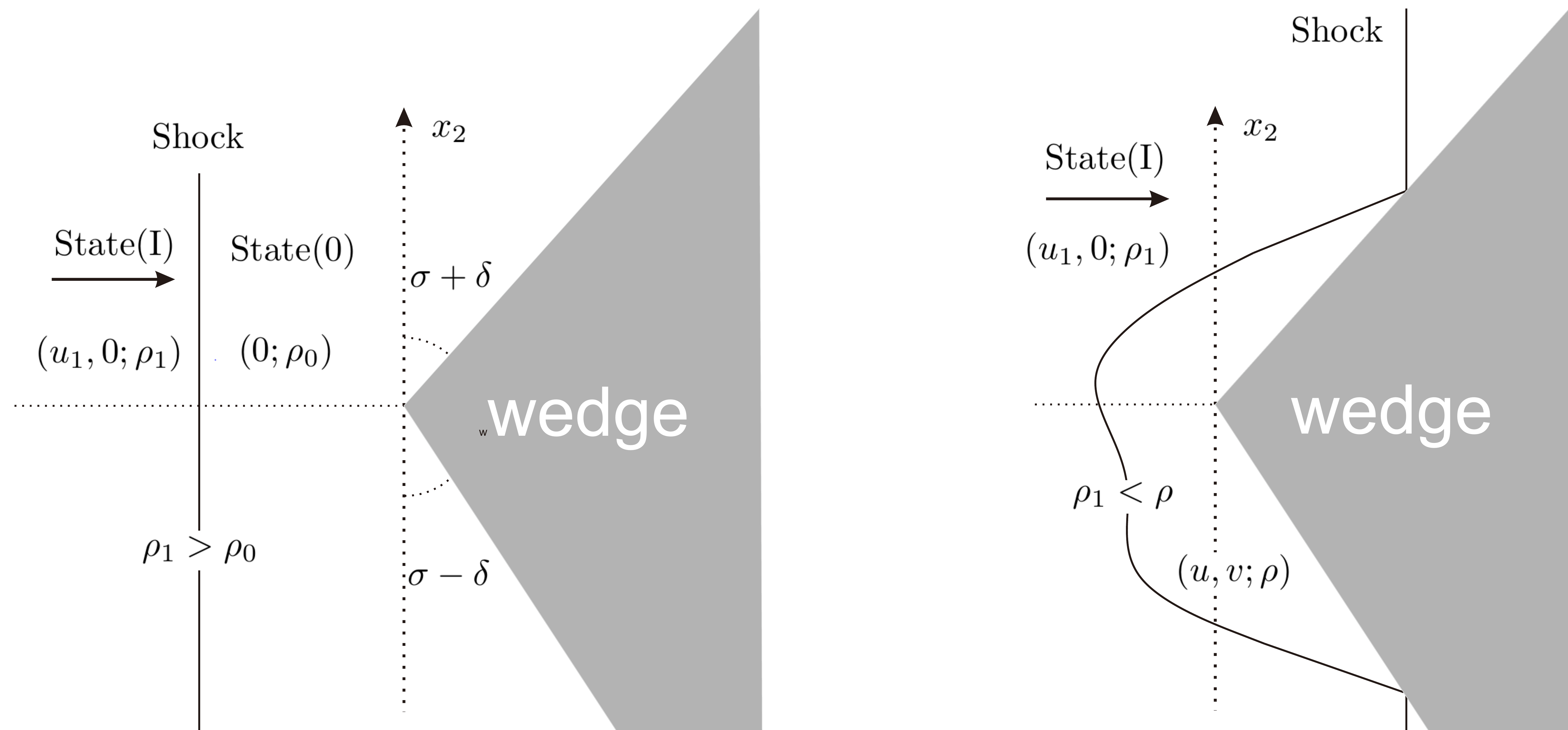}
\caption{Non-symmetric Shock Reflection-Diffraction Problem}
\label{fig:PhysicsStructure}
\end{figure}

Some mathematical models have been used in the past to study this problem, including Euler System \cite{ST} and its simplification potential flow equation \cite{CF} and \cite{CFbook}.

In this paper we consider isentropic $(p=\kappa\rho^\gamma,\gamma>0)$ and non-vortex$(\nabla\times(u,v)=0)$ fluids. So the state of the fluid is governed by the density  $\rho$ and a potential function $\Phi$($\nabla\Phi=(u,v)$).
 
Our equations in $(\vec x, t)$ coordinates are
\begin{align}
\partial_t\rho+\nabla_x\cdot[\rho\nabla_x\Phi]=0\text{\ (mass conservation)}\label{MC},\\
\partial_t\Phi+{1\over 2}\left|\nabla_x\Phi\right|^2+\rho^{\gamma-1}=B_0\text{\ (Bernoulli law)}\label{BL}.
\end{align}
 In the equations above, $\nabla_x=(\partial_{x^1},\partial_{x^2})$. And equation (\ref{BL}) comes from combining the following momentum conservation equation (\ref{MomentumConservation3dim}) with mass conservation equation (\ref{MC}).

The equation of  momentum conservation is 
\begin{align}
\partial_t(\rho\nabla_x\Phi)+\partial_j(\rho\partial_i\Phi\partial_j\Phi)\partial_i+\nabla p=0.\label{MomentumConservation3dim}
\end{align}
Combine it with mass conservation (\ref{MC}), we get:

\[\partial_t\nabla_x\Phi+\nabla_x\frac{|\nabla_x\Phi|^2}{2}+\frac{\nabla p}{\rho}=0.\]
Plugging in $p=\kappa \rho^\gamma$, it becomes:
\[\nabla_x\left(\partial_t\Phi+\frac{|\nabla_x\Phi|^2}{2}+\frac{\gamma\kappa}{\gamma-1}\rho^{\gamma-1}\right)=0.\]

If the potential function $\Phi$ satisfies the equation above weakly on ${\mathbb{R}}^2\setminus W$, we can remove $\nabla_x$ and get
\begin{align}\partial_t\Phi+\frac{|\nabla_x\Phi|^2}{2}+\frac{\gamma\kappa}{\gamma-1}\rho^{\gamma-1}=B_0,\label{pre_Equation2}
\end{align}
where $B_0$ is a constant independent of $x, t$. Previously, for fixed $t$, $\Phi$ is only defined up to a constant, so, if $B_0$ depends on $t$, we can extract a function of $t$ from $B_0$. 

Finally, we can make $\kappa=\frac{\gamma-1}{\gamma}$ by the scaling,
\[(\vec x,t,B_0)\rightarrow (\alpha\vec x, \alpha^2 t, \alpha^{-2}B_0), \alpha^2=\kappa\gamma/(\gamma-1),\] and get 
\begin{align}
\partial_t\Phi+{1\over 2}\left|\nabla_x\Phi\right|^2+\rho^{\gamma-1}=B_0.
\end{align}
And in front of the incident shock $(u, v; \rho)=(0; \rho_0)$, so $B_0$ should be $\rho_0^{\gamma-1}.$
Now we have reduced our reflection-diffraction problem to the following:

\begin{problem}Initial-boundary value problem

Find a solution of system (\ref{MC}) (\ref{BL}), in $(\solveregion)\times \mathbb{R}^{\geq 0}$, with $B_0=\rho_0^{\gamma-1}$, which satisfies the initial condition
\[(\rho,\Phi)\mid_{t=0}=\left\{\begin{array}{cc}(\rho_0,0),              &\text{for } x_2>x_1\cot(\sigma+\delta) \text{ or } x_2<-x_1\cot(\sigma-\delta), x_1>0,\\
                                                                                     (\rho_1, u_1x_1),&\text{for } x_1<0,\end{array}\right.\]
and the slip boundary condition along the boundary of the wedge $\partial W$:
\[\nabla_x\Phi\cdot \vn\mid_{\partial W}=0.\]
\end{problem}
This initial-boundary value problem is invariant under the self-similar scaling:
\[(\vec x, t)\rightarrow (\alpha\vec x, \alpha t),\ (\rho,\Phi)\rightarrow (\rho,\Phi/\alpha)\ \text{for }\alpha> 0,\]
so we look for self-similar solutions which satisfy
\[\rho(\vec x, t)=\rho(\xi,\eta), \ \Phi(\vec x, t)=t \phi(\xi,\eta)\text{ for } (\xi,\eta)=\vec x/t.\]
For self-similar solution mass conservation becomes:
\begin{align}\nabla\cdot(\rho\nabla\phi)-\nabla\rho\cdot(\xi,\eta)=0,\label{selfsimilarequation}
\end{align}
and by Bernoulli Law(\ref{BL}) $c$, $\rho$ can be represented by:
\begin{align}c^2=(\gamma-1)\rho^{\gamma-1}=(\gamma-1)(\rho_0^{\gamma-1}-\phi+\phi_\xi\xi+\phi_\eta\eta-\frac{\phi_\xi^2+\phi_\eta^2}{2}).\label{sonicspeedinintroduction}
\end{align}
If we define $\varphi=\phi-\frac{\xi^2+\eta^2}{2}$, which is named as pseudo-potential function in \cite{CF}, then relations above can be written as
\begin{align}
\nabla\cdot(\rho\nabla\varphi)+2\rho=0, \label{divergenceformequationforpsedopotential}
\end{align}
\begin{align}
c^2=(\gamma-1)\rho^{\gamma-1}=(\gamma-1)\left(\rho_0^{\gamma-1}-{1\over 2}|\nabla\varphi|^2-\varphi\right),\label{sonicspeedwithpseudopotential}
\end{align}
where $c$ stands for the sonic speed. By plugging (\ref{sonicspeedinintroduction}) into (\ref{selfsimilarequation}), the equation for $\phi$ can also be written as 
\[
\left[c^2-(\phi_\xi-\xi)^2\right]\phi_{\xi\xi}-2(\phi_\xi-\xi)(\phi_\eta-\eta)\phi_{\xi\eta}+\left[c^2-(\phi_\eta-\eta)^2\right]\phi_{\eta\eta}=0.
\]
So the equation for $\phi$ or $\varphi$ is elliptic if and only if 
\[|\nabla\varphi|<c.\]


So now we can reduce the initial-boundary value problem in $(\vec x, t)$-space to the following boundary value problem in self-similar $(\xi,\eta)$-space. 

\begin{problem}\label{prob:2}Boundary Value Problem

Find a weak solution $\varphi$ of (\ref{divergenceformequationforpsedopotential}), in ${\mathbb{R}}^2\setminus W$, which satisfies Neumann boundary condition
\[\nabla\varphi\cdot\vn=0,\ \ \text{on }\partial W, \]
and asymptotic boundary condition at infinity: $ \text{when }\xi^2+\eta^2\rightarrow \infty$
\[\varphi\rightarrow\left\{\begin{array}{cc}
						\varphi_0=-{1\over 2}(\xi^2+\eta^2), &\text{for } \xi>X,\eta>\xi \cot (\sigma+\delta) \text{ or } \eta<-\xi \cot(\sigma-\delta)\\
                                               \varphi_1=-{1\over 2}(\xi^2+\eta^2)+u_1(\xi-X),  &\text{for } \xi<X, \eta>0	
						\end{array}\right.,\]
where $X$ is the speed of the incident shock, and  is also the $\xi$-coordinate of the incident shock in $(\xi,\eta)$-plane.
\end{problem}
\subsection{Boundary Condition, Shock and Sonic Circle}
\label{incidentshocknormalandregularreflection}

We try to find a weak solution to the equation (\ref{divergenceformequationforpsedopotential}) (\ref{sonicspeedwithpseudopotential}), however because of the existence of shocks and sonic circles, directly solving (\ref{divergenceformequationforpsedopotential}) (\ref{sonicspeedwithpseudopotential}) is not convenient. So, based on some physical observations, we divide $\solveregion$ into several regions, and solve equations in different regions separately.

Physical observations suggest that,  on $(\xi,\eta)$-plane, $\solveregion$ should be divided into six regions, as illustrated by Figure \ref{fig:GlobalNotation}. The fluid in each region has properties which are different from those of its neighbors. Fluids in front of the incident shock are static, they are in \Statecero. Fluids in { State($\RN{1}$)} are the fluids behind the incident shock, its velocity and density are given as conditions. Fluids in \Statetwopm have uniform velocities and densities, their velocities can be found by algebraic computations.  The essential problem is to describe the behavior of the fluid  in the region $\Omega$. We solve this problem by finding a $\varphi$ solving (\ref{divergenceformequationforpsedopotential}) and (\ref{sonicspeedwithpseudopotential}) in $\Omega$ and satisfying some boundary conditions on $\partial\Omega$,

\begin{figure} 
       \centering
    \includegraphics[height=6cm]{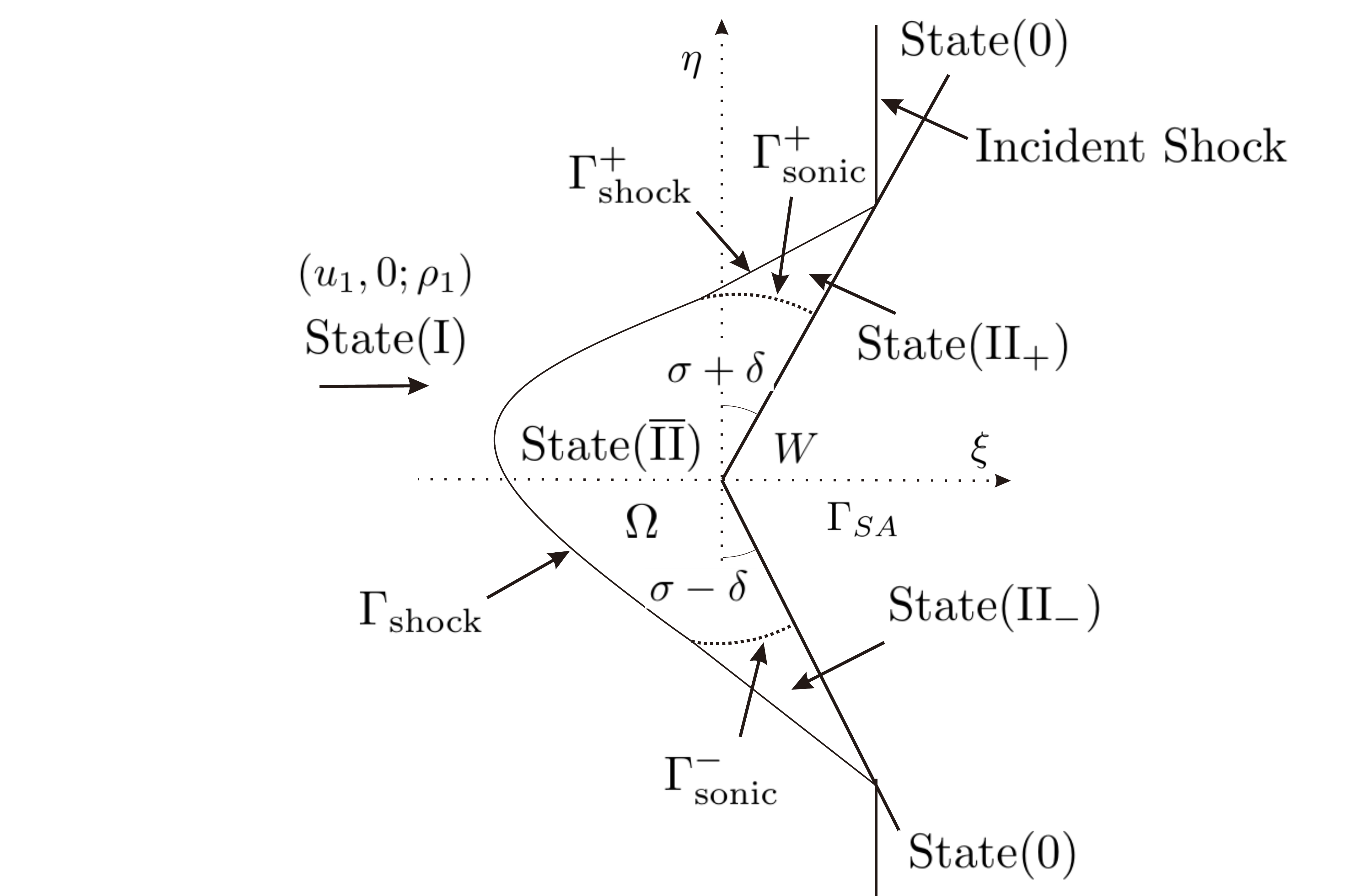}
\caption{Structures and Notations}
\label{fig:GlobalNotation}
\end{figure}

In the following, we explain what are the conditions that should be satisfied on $\partial\Omega$. Actually these conditions are the physical conditions that should be satisfied along shocks and sonic circles.

We first consider the boundary conditions on the shock. Let $\Gamma$ be a shock on $(\xi,\eta)$-plane, separating regions $\Omega_+$ and $\Omega_-$. The potential function, velocity and density of the gas in $\Omega_\pm$ are denoted by $\phi_{\pm}$, $(u_\pm, v_\pm)$ and $\rho_\pm$ respectively.

And since this is self-similar coordinate, point $(\xi,\eta)$ on $\gammashock$, will be moving with velocity  $(\xi,\eta)$. So the relative velocity of point represented by $(\xi,\eta)$ and gas in $\Omega_\pm$, is $(u_\pm-\xi,v_\pm-\eta)$, which is $\nabla\phi_\pm-(\xi,\eta)$ or $\nabla\varphi_\pm$, with pseudo-potential function $\varphi_\pm=\phi_\pm-{\xi^2+\eta^2\over 2}$.  

So Mass Conservation across shock should be 
\begin{equation}[\rho_+(\nabla\phi_+-(\xi,\eta))-\rho_-(\nabla\phi_--(\xi,\eta))]\cdot\vn=0
\label{RHconditionwithphiandn},
\end{equation}
or, with pseudo-potential function 
\begin{equation}(\rho_+\nabla\varphi_+-\rho_-\nabla\varphi_-)\cdot\vn=0
\label{RHconditionwithvarphiandn}.
\end{equation}
And note that, in generic case, the normal direction of $\Gamma$ is $(u_+-u_-,v_+-v_-)$. So (\ref{RHconditionwithphiandn}) and (\ref{RHconditionwithvarphiandn}) can also be written as
\begin{gather}
[\rho_+(\nabla\phi_+-(\xi,\eta))-\rho_-(\nabla\phi_--(\xi,\eta))]\cdot(u_+-u_-,v_+-v_-)=0
\label{RHconditionwithphianduv},\\
(\rho_+\nabla\varphi_+-\rho_-\nabla\varphi_-)\cdot(u_+-u_-,v_+-v_-)=0
\label{RHconditionwithvarphianduv}.
\end{gather}

In this paper, most of time, (\ref{RHconditionwithphiandn}) (\ref{RHconditionwithvarphiandn}) (\ref{RHconditionwithphianduv}) (\ref{RHconditionwithvarphianduv}) will all be referred to as RH condition.

And we also require vorticity vanish, so on $\Gamma$, the tangential velocity of gas in $\Omega_+$ and $\Omega_-$ should be equal. This leads to the continuity assumption of potential function
\begin{equation}
\phi_+=\phi_- \ (\text{or equivalently,}\ \varphi_+=\varphi_-)\label{continuityfreeboundarycondtion}.
\end{equation}

(\ref{continuityfreeboundarycondtion}) and any one of (\ref{RHconditionwithphiandn}) (\ref{RHconditionwithvarphiandn}) (\ref{RHconditionwithphianduv}) (\ref{RHconditionwithvarphianduv}) constitute free boundary condition on shock.

Then for convenience,  we define weak solutions to the potential flow equation. Notice that free boundary condition is singular form of equation(\ref{selfsimilarequation}), which is mass conservation. So it implies that two solutions to equation (\ref{selfsimilarequation}) in two domains separated by a shock, should be considered as a global weak solution, if they satisfy equation weakly, separately in two different domains, and satisfy  free boundary condition.

Assume $g$ is a smooth function with compact support in $\Omega=\Omega_+\cup\Omega_-$, here $\Omega_+,\Omega_-$ are separated by an arc, which is a free boundary $\Gamma$. $\varphi_+, \varphi_-$ are defined in $\Omega_+,\Omega_-$ respectively, satisfying (\ref{selfsimilarequation}) weakly and separately. And they satisfy free boundary condition along $\Gamma$.
\begin{align*}
   &\int_{\Omega}\rho(\nabla\varphi,\varphi)\nabla\varphi\cdot\nabla g-2\rho(\nabla\varphi,\varphi)g \ (\rho \text{ is defined by } (\ref{sonicspeed}),
            \  \varphi=\varphi_\pm \text{ in }\Omega_{\pm})\\
=&\int_{\Omega_+}\rho(\nabla\varphi_+,\varphi_+)\nabla\varphi_+\cdot\nabla g-2\rho(\nabla\varphi_+,\varphi_+)g
    +\int_{\Omega_-}\rho(\nabla\varphi_-,\varphi_-)\nabla\varphi_-\cdot\nabla g-2\rho(\nabla\varphi_-,\varphi_-)g\\
=&\int_\Gamma(\rho(\nabla\varphi_+,\varphi_+)\nabla\varphi_+-\rho(\nabla\varphi_-,\varphi_-)\nabla\varphi_-)\cdot\vn g\ (\vn\text{ is outer normal of }\Omega_+)\\
=&0\ (\text{by free boundary condition})
\end{align*}

In this paper, speed of incident shock will be denoted as $X$, so $X$ is also the $\xi$-coordinate of incident shock on $(\xi,\eta)$-plane. As stated in previous subsection, speed and density of the gas behind incident shock are denoted as 
$u_1$ and $\rho_1$. They satisfy:
\begin{equation}
(X-u_1)\rho_1=X\rho_0\ (\text{Mass Conservation})\label{massconservationforincidentshock}
\end{equation}
\begin{equation}
\rho_1^{\gamma-1}+{1\over 2}u_1^2-u_1X=\rho_0^{\gamma-1}\ (\text{Bernoulli Law})\label{Bernoullilawforincidentshock}
\end{equation}
(\ref{Bernoullilawforincidentshock}) comes from plugging $\phi=0,$ $\phi_\xi=u_1$, $\xi=X$ and $\phi_\eta=\eta=0$ into
(\ref{sonicspeedinintroduction}).

When shock hits a parallel wall ($\sigma=0,\delta=0$ with our notation), the shock will be reflected back, leaving gas static in its wake. This is called {\em\bf normal reflection}. We denote the static gas state behind normally reflected shock as {\Statetwobar}. Density and sonic speed of \Statetwobar\ are denoted as $\overline\rho_2$ and $\overline c_2$ respectively.
They satisfy
\begin{equation}
\overline c_2^2=(\gamma-1)\overline\rho_2^{\gamma-1}.
\end{equation} The speed of reflected shock is denoted as $Z$. $Z$ and $\overline \rho_2$ are determined by 
\begin{equation}
\rho_1(Z+u_1)=\overline\rho_2 Z,\ \ \ \ \ \ \ \ \ \ \ \ (\text{Mass Conservation})\label{massconservationfornormallyreflectedshock}
\end{equation}
\begin{equation}
\rho_0^{\gamma-1}+u_1(Z+X)=\overline\rho_2^{\gamma-1}.\ \ \ \ \ \ \  (\text{Bernoulli Law}) \label{Bernoullilawfornormallyreflectedshock}
\end{equation}
There will be one and only one physically admissible solution to (\ref{massconservationfornormallyreflectedshock}) and (\ref{Bernoullilawfornormallyreflectedshock}), which satisfies
\[\overline \rho_2>\rho_1,\text{ or }Z>0.\]
And by argument in section 3.1 of \cite{CF}, we know 
\begin{equation}
\overline c_2^2>Z^2 \label{subsonicityofnormallyreflectedshock}.
\end{equation}
We will denote $\sqrt{\overline{c}_2^2-Z^2}=Y$, so $(-Z,Y)$ is the coordinate of point, where reflected shock intersects sonic circle.

When incident shock is not parallel to the wall ($\sigma=0, \delta\neq0$, with our notation), a two-shock structure will form. And this is called the {\bf regular reflection}, contrary to the Mach reflection, which is more complicated.

Two regular reflections occur in our case, one is above $\xi$-axis, another is below $\xi$-axis. They are denoted as {\Statetwoplus} and {$\statetwominus$}. The density and velocity  of \Statetwopm\  are denoted as $\rho_2^\pm$ and $(u_2^\pm, v_2^\pm)$ respectively. When $\sigma$ is small enough, the $\rho_2^\pm$ and $(u_2^\pm,v_2^\pm)$, satisfying $\rho_2^\pm>\rho$ are uniquely determined, as shown in section 3.1 of \cite{CF}. And analysis in \cite{CF} also shows or implies
\begin{itemize}
\item  $ u_2^+>0,\ v_2^+>0;\ u_2^->0,\ v_2^-<0$;
\item  $|u_2^+|+|v_2^+|+|u_2^-|+|v_2^-|\leq C_p\sigma$;
\item  $|\rho_2^+-\overline\rho_2|+|\rho_2^--\overline\rho_2|\leq C_p\sigma$.
\end{itemize}
\subsection{Symmetric Case}
\begin{figure}
       \centering
    \includegraphics[height=6cm]{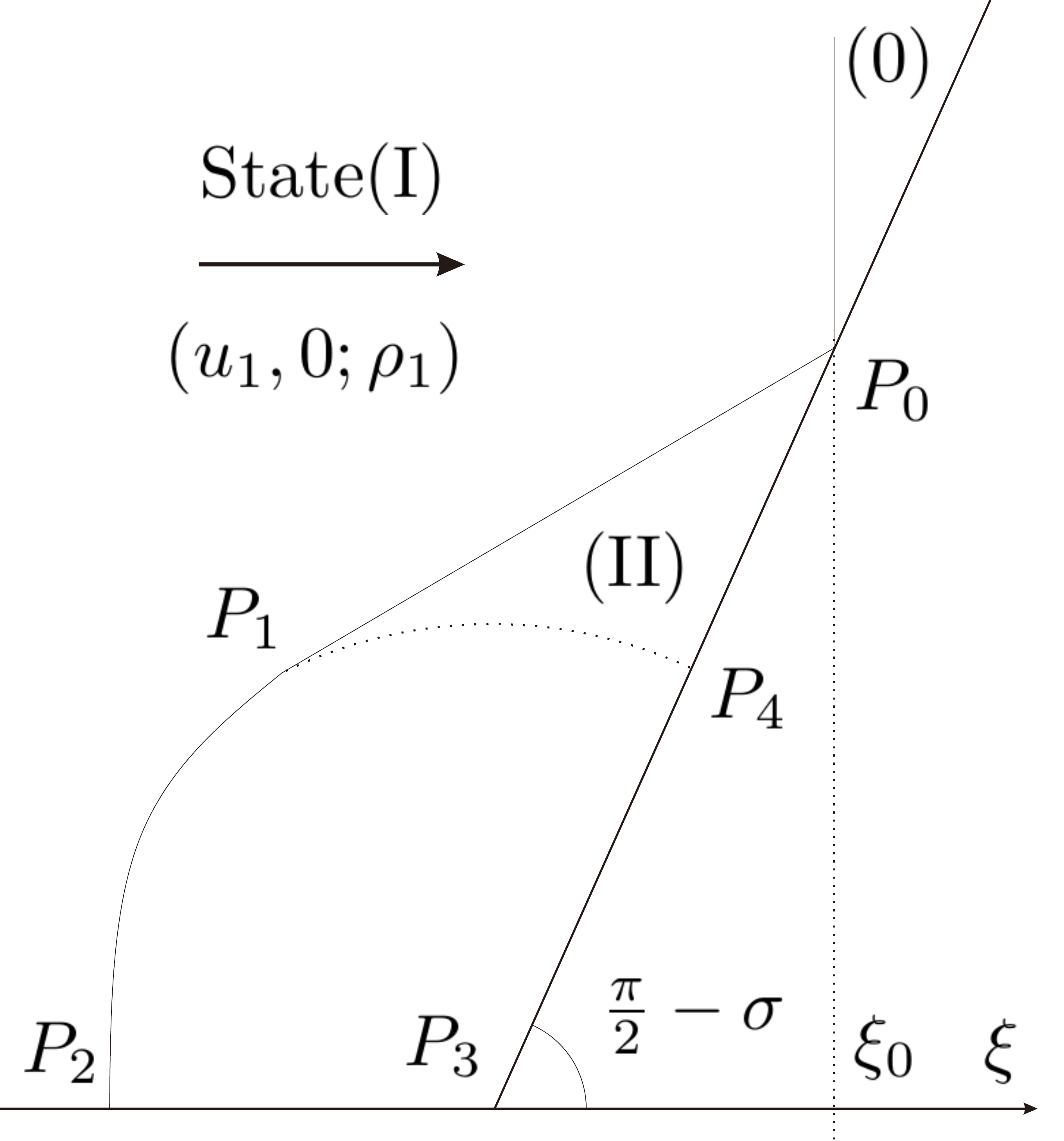}
\caption{Symmetric Case}
\label{fig:SymmetricCase}
\end{figure}
In \cite{CF}, when the wedge is symmetric $(\delta=0)$ and $\sigma$ is very small,  solutions to boundary value problem was rigorously constructed. Later, more general existence results, without requiring $\sigma$ being small, was proved in \cite{CFbook}. In the remaining part of this subsection for the convenience of notation, we denote the region surrounded by arc $A_1A_2$, $A_2A_3$...$A_iA_{i+1}$...$A_kA_1$ by region $A_1A_2...A_k$. And arc $A_1A_2...A_k$ means arc $A_1A_2\cup A_2A_3\cup...A_{k-1}A_k$.

When solving the problem in the symmetric case, many difficulties arise, including free boundary problems and ellipticity degeneration, especially when ellipticity degenerates on free boundary. To overcome these difficulties many new techniques were developed in \cite{CF} and \cite{CFbook}.In \cite{CF}, they first showed the existence of \Statetwo, based on implicit function theorem, when the wedge is close to a half-plane($\sigma$ closes to zero). Then in region $P_1P_2P_3P_4$ they constructed $\varphi$ as a solution to equation (\ref{divergenceformequationforpsedopotential}) (\ref{sonicspeedwithpseudopotential}) with the following boundary condition:
\[
\begin{array}{cc}
\varphi_{\vn}=0, &\text{  \ on  arc } P_2P_3P_4,\\
\varphi=\varphi_2, &\text{ on arc }  P_1P_4,\\
\varphi=\varphi_1, &\text{ on arc }  P_1P_2,\\
(\rho\nabla\varphi-\rho_1\nabla\varphi_1)\cdot \vn=0, &\text{on arc }  P_1P_2.
\end{array}\]

Notice that, in above, arc $P_1P_2$ is a free boundary so we can pose two boundary conditions on it. And arc $P_1P_4$ is a part of the sonic circle of \Statetwo \[P_1P_4\subset\left\{(\xi,\eta)\mid|\xi-u_2|^2+|\eta-v_2|^2=c_2^2\right\}.\] Along it, equation degenerates. And for the solution they constructed, $\nabla\varphi=\nabla\varphi_2$ on arc $P_1P_4$.

If we extend the definition of $\varphi$ to the whole ${\mathbb{R}}^2\setminus W$, by defining
\[\varphi=\left\{
\begin{array}{cc}
\varphi_0, & \text{in } \{\xi>\xi_0\}\setminus W\\
\varphi_1, & \text{in } \{\xi\leq\xi_0\}\setminus (W\cup \text{region } P_0P_1P_2P_3)\\
\varphi_2, & \text{in } \text{region } P_0P_1P_4
\end{array}
\right.\]
then $\varphi$ is a weak solution to Problem 2.

So a natural question is can we solve the boundary value problem for non-symmetric case? And if so, can the solution be as regular as the solution constructed in \cite{CF}?

\subsection{Notations}

Before presenting the result of this paper, we introduce some notations and conventions. Notations introduced here will be universally used through out the paper, except Appendix. Some other notations, which will only be used in a specific section (or subsection), will be introduced later, in the corresponding sections.

In $(\xi,\eta)$-plane, we still use { $W$} to denote the wedge.
 { State($\RN{1}$)} is the state  left to shock (behind incident shock and in front of reflected shock), and  {  State${(\overline{\RN{2}})}$} denotes the state behind normal  reflected shock. Since now we have two regular shock reflections, we use  { State($\RN{2}_+$)} denotes the state above $\xi$-axis, and  use  { State($\RN{2}_-$) }denotes the state below $\xi$-axis. And  { \Statecero} is the state in front of (or right to)  incident shock.

 $ $ {  ${\gammasonic}^\pm$} denotes the sonic circle of State($\RN{2}_{\pm}$), and  { \Gammasonic } denotes their union.
 {  \Gammashock\ } denotes the part of reflected shock between \Gammasonicplus\ and \Gammasonicminus.  And  { $\gammashock^+$} is used to denote the part of reflected shock above \Gammasonicplus. While  { $\gammashock^-$} is used to denote the part of reflected shock below \Gammasonicminus.
 { \Gammawedgeplus}({\Gammawedgeminus})\ denotes upper(lower) part of boundary of wedge, while \Gammawedge\ denotes the union of these two rays.

$ $  {  $\MC$} denotes corner of wedge.
 { $\Omega$} denotes the region surrounded by \Gammashock, \Gammasonic\ and \Gammawedge.
 { $C_p$} is a constant depends only on physical constants, $\rho_1, u_1, \rho_0, \gamma$.

And, for convenience, we may choose different directions for wedge and flow in different parts of this paper, like illustrated in the following pictures. But the angle between symmetry axis of wedge and direction of flow will always be denoted as $\delta$. So in every section we declare either direction of flow or direction of wedge.
\begin{figure}[h]
\begin{minipage}{0.2\linewidth}
    \includegraphics[height=4cm]{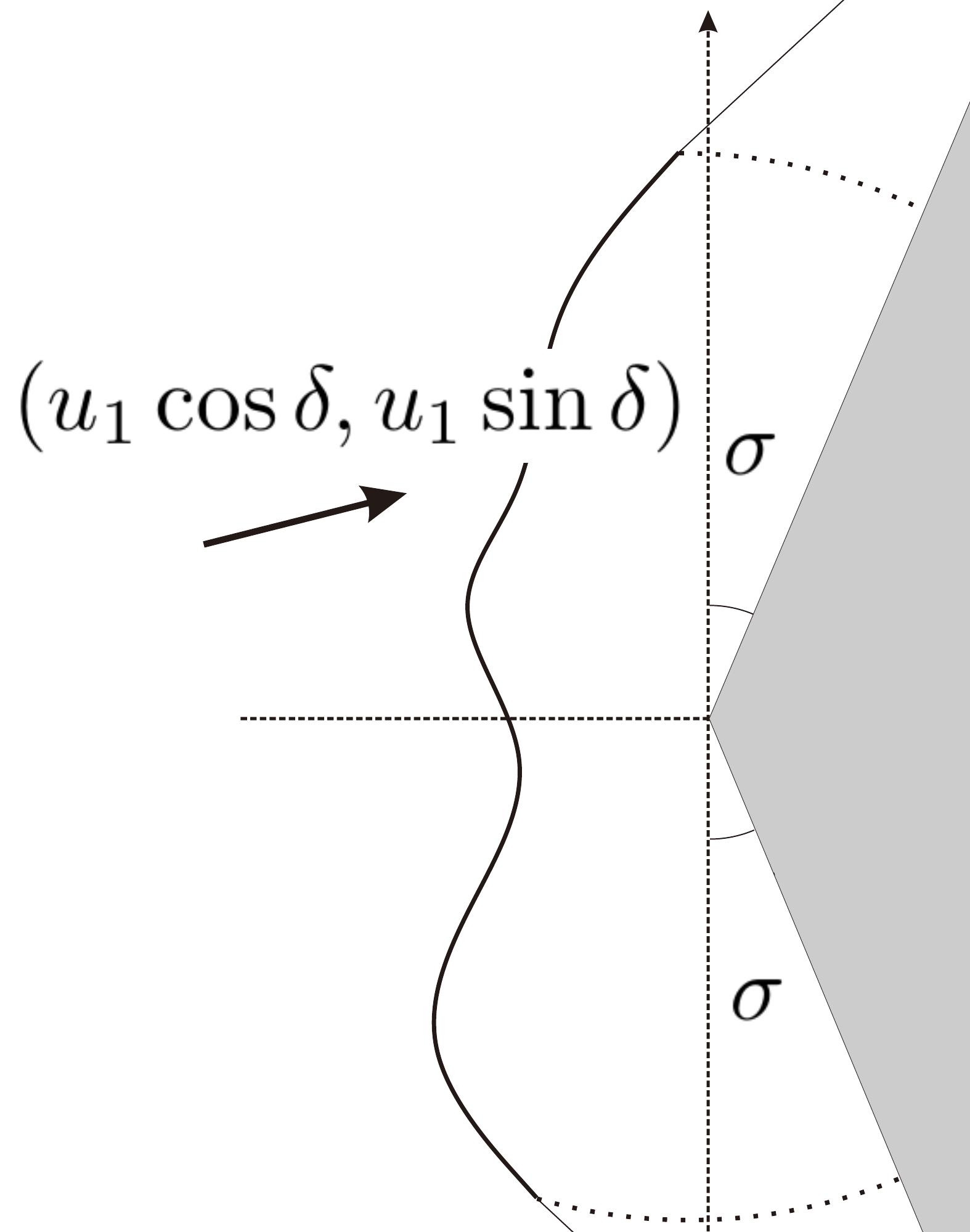}
\caption{Non-Vertical Shock hits Symmetric Wedge}
\label{fig:NonVerticalShockhitsSymmetricWedge}
\end{minipage}\hfill
\begin{minipage}{0.2\linewidth}
    \includegraphics[height=4cm]{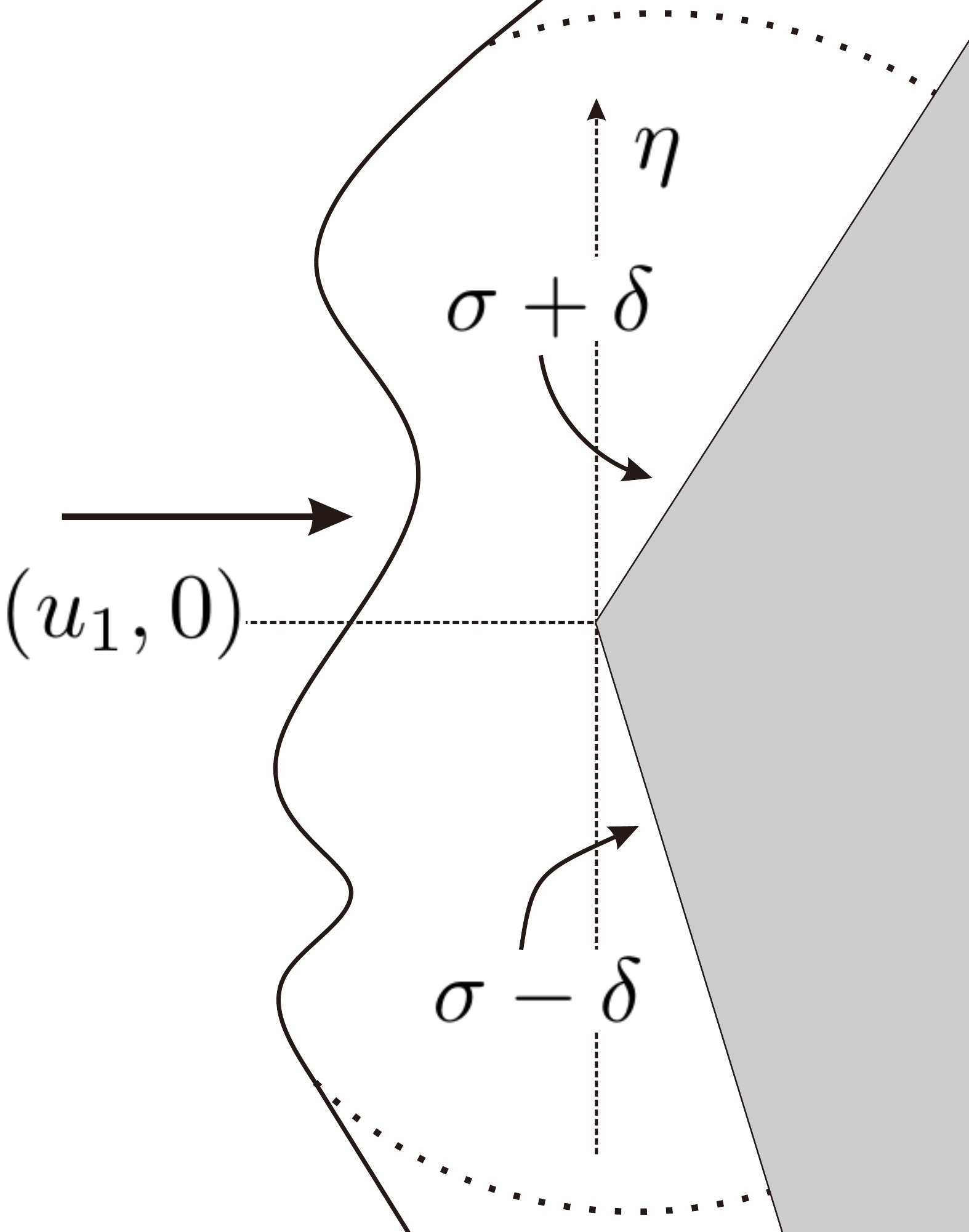}
\caption{Vertical Shock hits Non-Symmetric Wedge}
\label{fig:VerticalShockhitsNonSymmetricWedge}
\end{minipage}\hfill
\begin{minipage}{0.2\linewidth}
    \includegraphics[height=4cm]{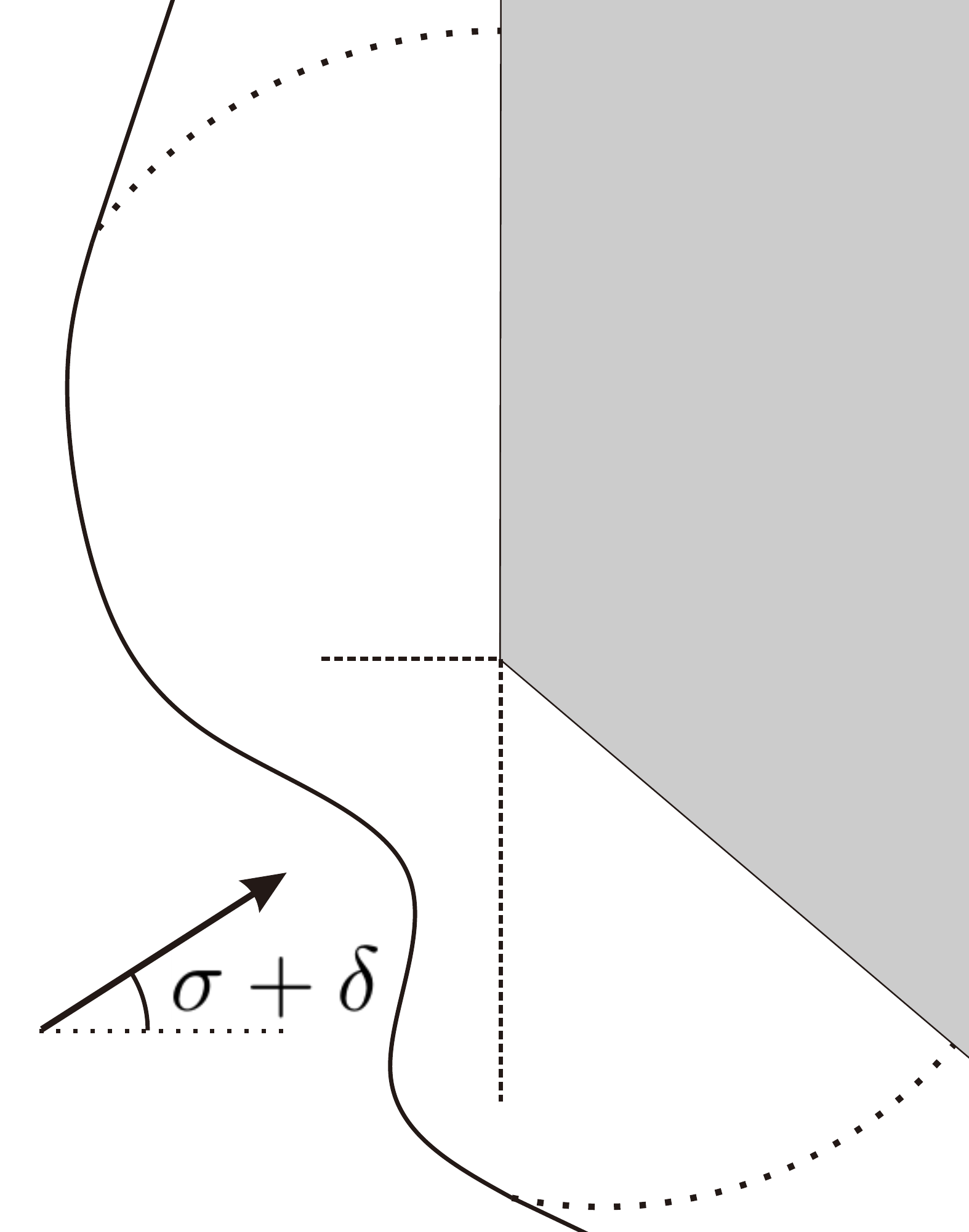}
\caption{\Gammawedgeplus\  coincides with $+\eta$-Axis}
\label{fig:Gammawedgepluscoincideswith+eta-Axis}
\end{minipage}\hfill
\begin{minipage}{0.2\linewidth}
    \includegraphics[height=4cm]{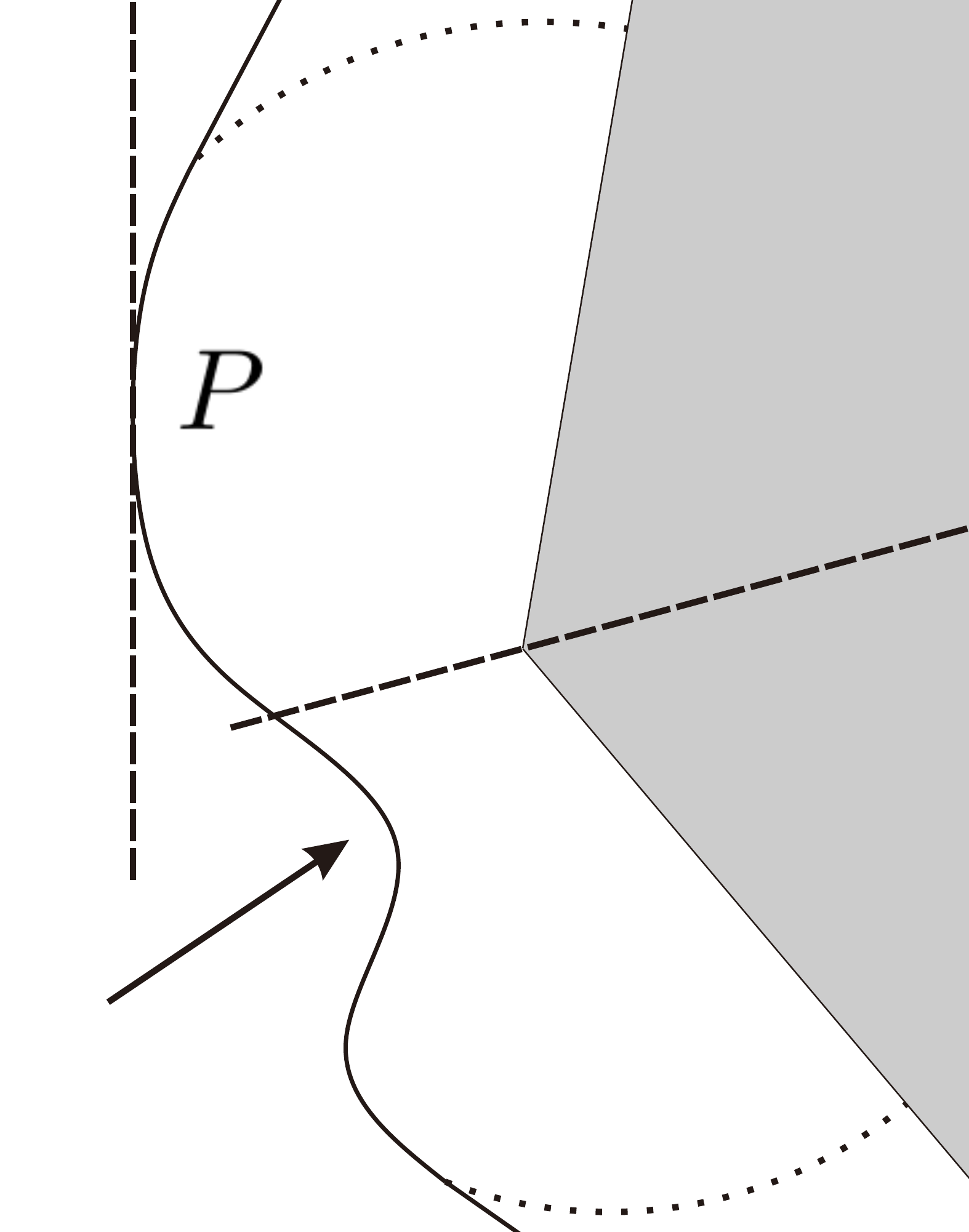}
\caption{A position easy to compute RH condition at $P$}
\label{fig:easytocomputeRHconditionbecausenownormalishorizontal}
\end{minipage}
\end{figure}

\subsection{ Regular Solutions and the Conclusion of the Paper}

{\em In this subsection the direction of the wedge and the flow is shown in Figure \ref{fig:NonVerticalShockhitsSymmetricWedge}, s.t. the symmetry axis of the wedge is $\xi$-axis.}
\begin{definition}{Regular Solution for Shock Reflection}
\label{def:definitionofregularsolution}

We define that a regular solution of self-similar shock reflection problem is a function $\phi\in C^3(\overline\Omega\setminus(\overline{\gammasonic}\cup\mathcal{C}))\cap C^1(\overline\Omega\setminus\mathcal{C})\cap Lip(\overline\Omega)$ which satisfies the following conditions:
\begin{description}
\item[Equation] \begin{equation}\left[c^2-(\phi_\xi-\xi)^2\right]\phi_{\xi\xi}-2(\phi_\xi-\xi)(\phi_\eta-\eta)\phi_{\xi\eta}+\left[c^2-(\phi_\eta-\eta)^2\right]\phi_{\eta\eta}=0\text{,  in $\Omega$,}
\label{quasilinearequationofphi}
\end{equation}
where, 
  \begin{equation}c^2=(\gamma-1)\rho^{\gamma-1}=(\gamma-1)
\left(\rho_0^{\gamma-1}-\phi+\phi_\xi\xi+\phi_\eta\eta-\frac{|\nabla\phi|^2}{2}\right),
\label{sonicspeed}
\end{equation} and equivalently  if we define $\varphi=\phi-\frac{\xi^2+\eta^2}{2}$,
relations above can be written as 
\[\nabla\cdot[\rho\nabla\varphi]+2\rho=0,\ \text{in $\Omega$},\]
where,
\[ c^2=(\gamma-1)\rho^{\gamma-1}=(\gamma-1)\left(\rho_0^{\gamma-1}-{1\over 2}|\nabla\varphi|^2-\varphi\right);\] 

\item[Subsonic Condition] 
\begin{equation}|\nabla\varphi|^2<c^2\text{, in } \overline\Omega\setminus\overline{\gammasonic};
\label{subsoniccondition}      
\end{equation}

\item[Continuity Conditions on Sonic Circles]
\begin{equation}\phi=\phi_2^{\pm},\ \ \text{ on $\gammasonic^\pm $},\label{continuityofpotentialfunctiononsoniccircle}
\end{equation}
\begin{equation}\nabla\phi=\nabla\phi_2^{\pm},\ \ \text{ on $\gammasonic^\pm $};\label{continuityofvelocityonsoniccircle}
\end{equation}

\item[Free Boundary Conditions on the Shock (Non-Vorticity and RH Condition)]
\begin{align}\varphi=\varphi_1,\ \ \ [\rho_1\nabla\varphi_1-\rho\nabla\varphi]\cdot \vn=0, \text{ on }\gammashock,\label{nonvorticityassumptiononshock}
\end{align}
or equivalently,
\begin{equation} \phi=\phi_1, \ \ \ [\rho_1(u_1\cos\delta-\xi,u_1\sin\delta-\eta)-\rho(u-\xi,v-\eta)]\cdot \vn=0,\text{ on }\gammashock;
\end{equation}

\item[Slip (or Neumann) Boundary Condition on the Boundary of Wedge]
\begin{equation}\phi_{\vn}=0 \text{, on } \gammawedge;
\end{equation}

\item[Regularity Assumption of Shock] 
\begin{equation}\centering
\text{\parbox{.8 \textwidth}
{$\gammashock$ is not tangential to the direction of the upcoming flow at any point, and it's a $C^1$ curve up to its ends;}}
\label{regularityassumptionofshock}
\end{equation}
\item[Admissible Condition] \begin{equation}\rho_1<\rho \text{, in } \overline\Omega \label{densityassumptionofregularsolution}.
\end{equation}

\end{description}

\end{definition}

If we can find such a regular solution $\phi$, then we can combine it with potential functions of \Stateone, \Statetwopm\ \ and \Statecero\  to form a function from ${\mathbb{R}}^2\setminus W$ to ${\mathbb{R}}$, which will be a weak solution to  Problem \ref{prob:2}.

In this paper we prove
\begin{theorem}[Main Theorem] \label{MainTheorem}
For isentropic gases (with $c^2=(\gamma-1)\rho^{\gamma-1}$), given a vertical shock with the upstream state $(u_1,0;\rho_1)$ and the downstream state $(0,0;\rho_0)$, we can find $\epsilon_T>0$ small enough, which depends on $\rho_0,\rho_1,u_1$ and $\gamma$, s.t. when the shock hits a convex wedge, with vertex angle $\pi-2\sigma(0<\sigma<\epsilon_T)$ and if the symmetry axis of the wedge forms an angle $\delta(0<\delta\leq \sigma)$ with the direction of the upstream flow, the shock reflection problem does not have a regular solution as defined in Definition \ref{def:definitionofregularsolution}.
\end{theorem}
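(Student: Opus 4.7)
The plan is to argue by contradiction: assume a regular solution $\phi$ exists for some $0<\delta\le\sigma$ with $\sigma$ small, and extract a symmetry-breaking impossibility. The core idea is to exploit the invariance of the self-similar equation (\ref{quasilinearequationofphi}) under the reflection $(\xi,\eta)\mapsto(\xi,-\eta)$. Although the domain $\Omega$ is not reflection-symmetric when $\delta\ne 0$ (the two halves of $\gammawedge$ make angles $\sigma\pm\delta$ with the upstream direction, and $\gammasonicplus$ is not the reflection of $\gammasonicminus$), the reflected function $\check\phi(\xi,\eta):=\phi(\xi,-\eta)$ still solves the same interior PDE on the reflected domain $\check\Omega$. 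Comparing $\phi$ with $\check\phi$ on the overlap $\Omega\cap\check\Omega$ isolates the antisymmetric content that any regular solution would be forced to carry.

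First I would establish quantitative closeness of $\phi$ to the symmetric $(\delta=0)$ solution from \cite{CF} when $\sigma,\delta$ are both small, via a perturbation / implicit function argument anchored at $\delta=0$. This gives enough regularity to define $w:=\phi-\check\phi$ on $\Omega\cap\check\Omega$; subtracting (\ref{quasilinearequationofphi}) at $\phi$ and at $\check\phi$ and freezing coefficients at intermediate states yields a homogeneous linear equation $Lw=0$ whose coefficients are uniformly elliptic in the interior by the subsonic condition (\ref{subsoniccondition}). The boundary data for $w$ can then be read off from the regular-solution conditions: on the admissible part of $\gammashock$ the Dirichlet condition $\phi=\phi_1$ together with Rankine-Hugoniot (\ref{nonvorticityassumptiononshock}) translate into an inhomogeneous oblique condition whose right-hand side encodes the reflection asymmetry of $\varphi_1$; on the symmetric part of $\gammawedge$ the slip condition becomes an oblique condition for $w$; on $\gammasonicpm$ the conditions (\ref{continuityofpotentialfunctiononsoniccircle})--(\ref{continuityofvelocityonsoniccircle}) force explicit Dirichlet and Neumann data proportional to the differences $\phi_2^+-\phi_2^-$ and $\rho_2^+-\rho_2^-$, which are of size $O(\sigma\delta)$ but strictly nonzero whenever $\delta>0$.

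Next I would test $Lw=0$ against a carefully chosen multiplier (either $w$ itself or an antisymmetric weight built from $\partial_\eta\phi$) and integrate by parts over $\Omega\cap\check\Omega$. Using the divergence form (\ref{divergenceformequationforpsedopotential}) together with the admissibility $\rho_1<\rho$ from (\ref{densityassumptionofregularsolution}), the interior contribution becomes a positive-definite quadratic form in $\nabla w$, while the boundary contributions assemble into explicit terms on $\gammashock$, $\gammawedge$ and $\gammasonicpm$ whose leading order is controlled by the asymmetry between \Statetwoplus\ and \Statetwominus. The intended contradiction is then quantitative: a manifestly nonnegative interior integral must equal a boundary integral whose sign is fixed and whose magnitude is strictly positive of order $\delta$, which is impossible for $\delta>0$.

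The main obstacle I anticipate is the analysis near the corner $\MC$ of the wedge and along $\gammasonicpm$, where either three boundary conditions meet at a corner of the domain or the ellipticity of the equation degenerates. At $\MC$, standard oblique/elliptic estimates do not apply, and to justify the integration by parts one needs the extension of G.~Lieberman's corner estimates announced in the abstract to ensure that $\nabla\phi$ is sufficiently controlled and the boundary integrals converge. Along $\gammasonicpm$ the flux $(\rho\nabla\varphi-\rho_2^{\pm}\nabla\varphi_2^{\pm})\cdot\vn$ degenerates to zero and direct trace arguments fail; here the integral method developed in the paper must be invoked to make the boundary integrals absolutely convergent and to compute their leading order. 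Once these two technical ingredients are in place, the integral identity closes and Theorem \ref{MainTheorem} follows.
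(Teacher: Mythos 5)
Your setup — reflecting across the symmetry axis, forming $w=\phi-\check\phi$ on $\Omega\cap\check\Omega$, linearizing to get $Lw=0$, and attacking the problem through integral identities with the RH and sonic-circle conditions supplying boundary data — matches the paper's strategy in its broad outline. But your proposed endgame contains a genuine gap: you claim the contradiction will come from a single integration by parts in which ``a manifestly nonnegative interior integral must equal a boundary integral whose sign is fixed and whose magnitude is strictly positive of order $\delta$.'' This cannot work. A hypothetical regular solution genuinely differs from its reflection by $O(\delta)$, so in any such identity the interior quadratic form in $\nabla w$ is itself of size $O(\delta^2)$ or $O(\delta^3)$ and is perfectly consistent with boundary terms of the same size; there is no sign or magnitude mismatch to exploit at the level of one energy identity. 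In the paper the integral comparisons only ever produce \emph{estimates}: Section \ref{symmetricestimate} gives the upper bound $\int_{\Omega_c}|\phi-\tilde\phi|^3\le C_p\delta^3$, upgraded by Moser iteration and a boundary Krylov--Safonov lemma to $|\phi-\tilde\phi|\le C_p\delta$ and to pointwise bounds on $\phi_\eta$ along the symmetry axis; Section \ref{antisymmetricestimate} then symmetrizes the upstream flow (subtracting the linear function $u_1\sin\delta\tan\sigma\,\xi+u_1\sin\delta\,\eta$) and runs a second, sharper integral comparison to obtain the \emph{lower} bound $\phi(\xi,\eta)-\phi(\xi,-\eta)\ge\delta(\eta/C_p-C_p\sigma^{1/3})$.

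The actual contradiction is a corner-regularity obstruction, and this ingredient is entirely absent from your proposal. The antisymmetric part of $\phi$ vanishes together with its gradient at the wedge corner $\MC$ and satisfies the linearized equation with Dirichlet data on the symmetry axis and Neumann data on the wedge; Lemma \ref{lem:singularityestimate} constructs a subsolution growing like $y/(-\log y)^{\gamma}$ which forces any such function bounded below by $y-\varepsilon$ on an arc to fail to be $C^{1,\alpha}$ at the corner. The lower bound from Section \ref{antisymmetricestimate} puts the antisymmetric part exactly in this situation once $\sigma^{1/3}$ is small, contradicting the uniform $C^{1,\alpha}$ corner estimate of Proposition \ref{prop:gradientestimateatcornerofwedge} that every regular solution must satisfy. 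Separately, your opening step — establishing closeness to the symmetric solution of \cite{CF} by an implicit-function argument — is not available: nothing is known about uniqueness or stability of the hypothetical nonsymmetric solution, so one cannot anchor it to the $\delta=0$ branch. The paper instead derives the needed a priori control (closeness to \emph{normal} reflection, bounds on $u$, $v$, $v/(u_1-u)$, and the coercive estimate $RH>(\phi_1-\phi)(\rho-\rho_1)$) directly from the definition of a regular solution via maximum principles. Without the two-sided symmetric/antisymmetric squeeze and the corner singularity lemma, your argument does not close.
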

\subsection{Structure of the Paper}
Section \ref{regularity} provides some relatively rough estimates. Some of these estimates are used to show more regularities of regular solution, some of them imply that when $\sigma$ tends to zero, our corresponding regular solution tends to potential function of normal reflection which is a constant.

With these estimates, we can have a good control on regular solution in later sections.

In section \ref{asymptotic}, with computation we show precisely how geometric structure differs from symmetric shock reflection. And so we can tell the relative position of \Gammasonicplus\ and $\widetilde{\Gamma_{\text{sonic}}^-}$, here $\widetilde{\Gamma_{\text{sonic}}^-}$ is the reflection of ${\gammasonicminus}$ across symmetry axis of wedge. And the geometric structure will be useful when doing integral by parts in section \ref{symmetricestimate}. The technique in this section depends on comparison of derivatives, at $\delta$ or $\sigma=0$, so it's only available for $\sigma$ and $\delta$ small.

In section \ref{symmetricestimate}, we proved ``symmetric estimate". It says, if we take the symmetry axis of wedge as $\xi$-axis, regular solution is almost symmetric w.r.t. $\eta$, precisely, near  symmetry axis of wedge,
\[|\phi(\xi,\eta)-\phi(\xi,-\eta)|\leq C_p\delta.\]

To prove above, we first get an integral type symmetric estimate. With method of \cite{CFcomparison} we are able to control $\int_{\Omega\cap\widetilde\Omega}|\phi-\tilde\phi|^3$ by a divergence integral,  so we can reduce it to integral on boundary of $\Omega\cap\widetilde{\Omega}$, then making use of RH condition and $\nabla\phi=\nabla\phi_2^{\pm}$ on $\gammasonic^{\pm}$, we can further transform boundary integral into integral in $\Omega\triangle\widetilde\Omega$ and on $\partial(\Omega\cup\widetilde\Omega)$. These integrals can either be estimated with the properties of $RH$ function derived in section \ref{regularity}, or by explicit computation and boundary condition.

 Then we get $L^\infty$ symmetric estimate near corner of wedge, with Moser iteration; and around where shock intersects symmetry axis, with a generalized Krylov-Safonov estimate (Lemma \ref{lem:BoundaryKrylovSafonov}). From this $L^\infty$ estimate, we get on symmetry axis of wedge, $|\phi_\eta|\leq C_p\delta$ near shock and $ |\phi_\eta|\leq C_p\delta r^{{\alpha\over 2}-1} $ near corner of wedge. 

Above gradient estimates will be used in section \ref{antisymmetricestimate}, to control boundary integral.

In section \ref{antisymmetricestimate}, we provided a lower bound for $\phi-\widetilde{\phi}$, precisely:

\begin{equation}\phi(\xi,\eta)-\phi(\xi,-\eta) \geq \delta({\eta \over C_p}-C_p\sigma^{1\over 3})\label{antisymmetricestimateinintroduction}
\end{equation}
 So contrary to section \ref{symmetricestimate}, we call this estimate as ``antisymmetric estimate".
To prove above, roughly speaking, we minus a linear function $\eta\sin\delta$ from $\phi$, and denote the new function as $\psi$, then estimate $\psi(\xi,-\eta)-\psi(\xi,\eta)$ from above. We first do the integral estimate as in last section, then with Moser Iteration we get an $L^\infty$ estimate on $\partial B_{r_0}$, $r_0$ is smaller then some physical constants, decided in Lemma \ref{lem:maximumprinciple} and \ref{lem:singularityestimate}.

 With (\ref{antisymmetricestimateinintroduction}), we can use Lemma \ref{lem:singularityestimate} to get a contradiction.

In appendix are some lemmas about linear partial differential equations.

Lemma \ref{lem:maximumprinciple} \ref{Lemma A2} and \ref{lem:singularityestimate} are all about the solution of linear elliptic partial differential equation with singular coefficients near a corner.
Lemma \ref{lem:maximumprinciple} is a maximum principle, Lemma \ref{Lemma A2} is an estimate based on conformal mapping, it gives a singular bound on $|\phi_\eta|$ and Lemma \ref{lem:singularityestimate} implies that out of a convex wedge, a $C^1$ solution to our linear partial differential equation, should not be $C^{1,\alpha}$, if solution is bigger on one side of the symmetry axis of wedge.

Lemma \ref{lem:BoundaryKrylovSafonov} is a generalized Krylov-Safonov estimate, which is used to get $L^\infty$-symmetric estimate near free boundary.



\section{Fundamental Estimates for Regular Solutions}\label{regularity}
In this section, we provide some fundamental estimates for regular solutions defined in {Definition \ref{def:definitionofregularsolution}}.  Roughly speaking, with estimates in this section we have a better control on regular solution.
\begin{itemize}
\item We show the shock reflection corresponds to our regular solution is close to  normal shock reflection (subsection\ref{gradientestimate}), 
\item we provide some regularity estimate of potential function (subsection \ref{continuityofvelocityatcornerofwedge}, \ref{holdergradientestimateatcornerofwedge}, \ref{holdergradientestimateawayfromsoniccircleandcornerofwedge},   \ref{C2chiestimateawayfromsoniccircleandcornerofwedge})
\item we provide a ``coercive'' estimate of the RH function, in subsection \ref{estimateofRHfunction}, which will be used  in section \ref{symmetricestimate} and section \ref{antisymmetricestimate} to estimate boundary integral.
\end{itemize}
\subsection{Continuity of Velocity at the Corner of the Wedge}\label{continuityofvelocityatcornerofwedge}
Based on Estimate in \cite{CFHX}, a regular solution to potential flow equation, which is only assumed to be Lipschitz at corner of wedge should  actually be $C^1$. And so at corner of wedge,
\statementstarts{ $\phi_\xi=u=\phi_\eta=v=0$.\label{cornerspeed}  }\statementends
We cannot immediately get $|\nabla\phi|(\xi,\eta)\leq C_p(|\xi|+|\eta|)^{\alpha}$, which is contained in Proposition \ref{prop:gradientestimateatcornerofwedge}, because now we don't have any control on $|\nabla\phi|_0$ in $\overline\Omega$. We will more precisely estimate $\nabla\phi$ until section \ref{holdergradientestimateatcornerofwedge}.

\subsection{Comparison of $\phi$ and $\phi_1$}
{\em In this subsection, position of wedge and flow is as shown in Figure \ref{fig:VerticalShockhitsNonSymmetricWedge}, so velocity of upstream flow is $(u_1,0)$.} And we want to show $\phi<\phi_1$ in $\Omega$. 

First, in $\Omega$, $\phi$ satisfies (\ref{quasilinearequationofphi}). In this paper, except Appendix, when we only use linear property of (\ref{quasilinearequationofphi}), we denote this linear equation by 
\begin{equation}a_{ij}\phi_{ij}=0.\label{linearequationofphi}
\end{equation} ${a_{ij}}'s$ are polynomials of $\xi,\eta,u,v$, so $a_{ij}\in C^1(\overline\Omega\setminus(\overline{\gammasonic}\cup\mathcal{C}))$. 

So,
\begin{equation}
a_{ij}(\phi-\phi_1)_{ij}=a_{ij}\phi_{ij}=0,\label{equationofphiminusphione}
\end{equation}
since $\phi_1$ is a linear function. So
\statementstarts
{$\phi-\phi_1$ cannot achieve maximum in $\Omega$}\label{phiminusphionecannotachievemaxinomega}.
\statementends

Then, on \Gammasonicpm, 
\[\phi-\phi_1\underset{(\ref{continuityofpotentialfunctiononsoniccircle})}=\phi_2^\pm-\phi_1.\]
Since in section \ref{incidentshocknormalandregularreflection}, we have shown that $\phi_1=\phi_2^\pm$ on \Gammashockpm, and when $\sigma$ small enough $|\nabla\phi_2^\pm|\leq C_p\sigma$, so on right-hand side of \Gammashockpm
\[\phi^{\pm}_{2,\xi}<\phi_{1,\xi}=u_1,\ \text{(providing $\sigma$ small enough)}\]
and so 
\begin{equation}
\phi=\phi_2^{\pm}<\phi_1 \text{ on \Gammasonicpm.}\label{phismallerthanphioneongammasonic}
\end{equation}

For $\vn_{\pm}=(\cos(\sigma\pm\delta),\mp\sin(\sigma\pm\delta))$, which are outer unit normal vectors of $\Omega$ on \Gammawedgepm, 
\[(\phi-\phi_1)_{\vn_{\pm}}=\phi_{\vn_{\pm}}-\phi_{1,\vn_\pm}=-u_1\cos(\sigma\pm\delta)<0,\ \text{on \Gammawedgepm} \text{\ (providing $\sigma$ small enough)}\]
So,
\statementstarts
{$\phi-\phi_1$ cannot  achieve maximum on ${\gammawedgepm}$}.\label{phiminusphionecannotachievemaximumongammawedge}
\statementends
Note, that at corner of wedge when $\overline\gammawedgeplus$ meets $\overline\gammawedgeminus$, we can apply both $\vn_+$ and $\vn_-$ to $\phi$.

Combine (\ref{equationofphiminusphione})
		(\ref{phiminusphionecannotachievemaxinomega})
		(\ref{phismallerthanphioneongammasonic})
		  (\ref{phiminusphionecannotachievemaximumongammawedge}) 
	and (\ref{nonvorticityassumptiononshock}) together, we get
\begin{equation}
\phi<\phi_1\text{, in } \Omega\label{philessthanphioneinOmega}
\end{equation}

\subsection{Gradient Estimates}
\label{gradientestimate}
{\em In this subsection the direction of upcoming flow  is $(u_1,0)$, as shown in Figure {\ref{fig:VerticalShockhitsNonSymmetricWedge}}.}

Take $\xi-$derivative of  linear equation of $\phi$, (\ref{linearequationofphi}), we get:
\begin{equation}
a_{11}u_{\xi\xi}+2a_{12}u_{\xi\eta}+a_{22}u_{\eta\eta}+a_{22}(\frac{a_{11}}{a_{22}})_{\xi}u_{\xi}+a_{22}(\frac{2a_{12}}{a_{22}})_{\xi}u_{\eta}=0.
\label{linearequationofu}
\end{equation}
\statementstarts{So $u$ can't achieve maximum(or minimum) in $\Omega^{\circ}$ .}\label{uinterior}\statementends
If at some interior point of \Gammawedgeplus, $u$ achieves a local maximum (or minimum), then at this point,
\[u_{\vt}=0,\ D_{\vt}(\phi_{\vn})=0,\ a_{ij}\phi_{ij}=0.\]
In above $\vt=(\sin(\sigma+\delta),\cos(\sigma+\delta)),\ \vn=(\cos(\sigma+\delta),-\sin(\sigma+\delta))$, write above  equations in matrix form:
\[
\left(\begin{array}{ccc}
         \sin(\sigma+\delta)                                   &\cos(\sigma+\delta)                          &0\\
        \sin(\sigma+\delta)\cos(\sigma+\delta)&\cos^2(\sigma+\delta)-\sin^2(\sigma+\delta)&-\sin(\sigma+\delta)\cos(\sigma+\delta)\\
        a_{11}                                                          &2a_{12}                                                                 &a_{22}
        \end{array}
\right)
\left(\begin{array}{c}u_\xi\\u_\eta\\v_\eta\end{array}\right)=0.
\]
Determinant of this $3\times 3$ matrix equals
\[-\sin(\sigma+\delta)[a_{11}\cos^2(\sigma+\delta)-2a_{12}\sin(\sigma+\delta)\cos(\sigma+\delta)+a_{22}\sin^2(\sigma+\delta)]\neq0.\]
This implies at this maximum point $\nabla u=0$, which is a contradiction to Hopf lemma.
\statementstarts{So $u$ can't achieve maximum(or minimum) value at any  interior point of \Gammawedgeplus.
\label{uwedgeplus}}\statementends
\statementstarts{
Similarly, when $\sigma\neq\delta$,
 $u$ can't achieve any maximum(or minimum) value on \Gammawedgeminus,
}\label{uwedgeminus}\statementends
\statementstarts{ and when $\sigma=\delta$, $u=0$ on \Gammawedgeminus}.\label{deltaequalssigma}\statementends
On \Gammashock, $\phi-\phi_1=0$ (\ref{nonvorticityassumptiononshock}); in $\Omega$, $\phi-\phi_1<0$ (\ref{philessthanphioneinOmega}),
\[a_{ij}(\phi-\phi_{1})_{ij}=0\ \text{(because $\phi_1$ is a linear function),}\]
 and since we know now \Gammashock\ is the graph of a $C^1$ function of $\eta$ (\ref{regularityassumptionofshock}), so we can apply Hopf maximum principle, to $\phi-\phi_1$, and get
 \begin{equation}u=\phi_\xi<\phi_{1,\xi}=u_1, \ \text{on } \gammashock-\overline{\gammasonic^\pm}\label{ushock}.\end{equation}
Put (\ref{cornerspeed}) (\ref{uinterior}) (\ref{uwedgeplus}) (\ref{uwedgeminus}) (\ref{deltaequalssigma}) (\ref{ushock})  together, and use the assumption, that regular solution is $C^1$ on sonic circle, we have : when $\sigma,\delta$ small enough s.t. $u_2^\pm<u_1$, 
\begin{equation}u<u_1, \ \text{in}\ \overline\Omega\label{upperboundofu}.\end{equation}

Now with (\ref{upperboundofu}) we can define $S=\FS$ in $\overline\Omega$, and since $\phiisregular$ and $|u|=|u_2^\pm|<u_1$ on $\gammasonic^\pm$, $S\in C^0(\overline\Omega)\cap C^2(\overline\Omega\setminus(\overline\gammasonic\cup\mathcal{C})) $.

In $\overline\Omega\setminus(\overline\gammasonic\cup\mathcal{C})$, we have
\[
\left\{\begin{array}{c}
        S_\xi=S_u u_\xi+S_v v_\xi\\
        S_\eta=S_u u_\eta+S_v v_\eta\\
        \linearequationofphi
        \end{array}\right.
\]
Solve above equation of $u_\xi, v_\xi, v_\eta$, we get (we know $S_v=\frac{1}{u_1-u}$ is also well defined and $S_v\neq0$)
\begin{align}\label{derivativeofS}
\left(\begin{array}{c}u_\xi \\v_\xi \\ v_\eta
        \end{array}                                    \right)={1 \over a_{11}S_v^2-2a_{12}S_u S_v+a_{22}S_u^2}\left(\begin{array}{cc}S_u a_{22}-2S_va_{12}&-S_va_{22}\\
																											a_{11}S_v                        &S_ua_{22}\\
																											-a_{11}S_u 			  &-2S_ua_{12}+S_va_{11}
																							\end{array}
                                                                                                                                                                                \right)
\left(\begin{array}{c}
        S_\xi\\S_\eta
        \end{array}
\right)
.\end{align}
Plug
\[\equationofu\]
\[\equationofv,\] into:
\begin{align*}
  &a_{11}S_{\xi\xi}+2a_{12}S_{\xi\eta}+a_{22}S_{\eta\eta}\\
=&a_{11}(S_u u_{\xi\xi}+S_v v_{\xi\xi}+S_{uu}u_\xi^2+2S_{uv}u_\xi v_\xi+S_{vv} v_\xi^2)\\
  &+2a_{12}(S_u u_{\xi\eta}+S_v v_{\xi\eta}+S_{uu}u_\xi u_\eta+S_{uv}u_\xi v_\eta+S_{uv}u_\eta v_{\xi}+S_{vv} v_\xi v_\eta)\\
  &+a_{22}(S_u u_{\eta\eta}+S_v v_{\eta\eta}+S_{uu}u_\eta^2+2S_{uv}u_\eta v_\eta+S_{vv} v_\eta^2)
\end{align*}
and replace $u_\xi, v_\xi=u_\eta, v_\eta$ by linear combination of  $S_\xi, S_\eta$, using (\ref{derivativeofS}). We get there exists $ \hat b_i's$, which are $C^0$ functions in $\overline\Omega\setminus(\overline\gammasonic\cup\mathcal{C})$, s.t.
\begin{equation}a_{ij}S_{ij}+\hat b_i S_i=0.\label{equationofS}
\end{equation}
\statementstarts{So $S$ can't achieve maximum or minimum at interior point of $\Omega$.}\label{Sinterior}\statementends
 
To show that $S$ can not achieve maximum or minimum on \Gammashock\   either,  we define 
\begin{equation}
RH=[\rho(u-\xi,v-\eta)-\rho_1(u_1-\xi,-\eta)](u-u_1,v), \text{ in $\overline{\Omega}$}
 \label{RH}
\end{equation} in the expression above, 
\begin{equation} \rho=\left[\rho_0^{\gamma-1}-\phi+\phi_\xi\xi+\phi_\eta\eta-\frac{\phi_\xi^2+\phi_\eta^2}{2}\right]^{1 \over \gamma-1}\label{rho}\end{equation}
$RH$ can be considered either as a function of $\xi, \eta$ or a function of five variables, $\phi, u, v, \xi, \eta$, as expressed in (\ref{RH}) and (\ref{rho}).  To avoid confusion, when RH is considered as a function of five variables,  $\xi(\eta\text{ and }T)$-derivative of RH will be denoted as $RH_{(\xi)}(RH_{(\eta)}\text{ and } RH_{(T)})$ respectively.  Taking tangential derivative of  RH on \Gammashock\ gives: 
\begin{equation}
RH_T=RH_u u_T+RH_v v_T+RH_\phi \phi_T+RH_{(T)}\ \ (\text{here }T=(v, u_1-u)),\label{tangentialderivativeofRHonshock}
\end{equation} 
\begin{itemize}
\item
on left-hand side of (\ref{tangentialderivativeofRHonshock}), RH is considered as a pure function of $\xi, \eta$, 
\item
on right-hand side of  (\ref{tangentialderivativeofRHonshock}), RH is considered as a function of $\phi, u, v, \xi, \eta$. 
\end{itemize}
Then algebraic computation gives, when RH is considered as a function of $\phi, u, v, \xi, \eta$:
\[RH_\phi=-[(u-\xi)(u-u_1)+(v-\eta)v]\frac{\rho^{2-\gamma}}{\gamma-1},\]
\[RH_{(\xi)}=(\rho_1-\rho)(u-u_1)+[(u-\xi)(u-u_1)+(v-\eta)v]\frac{\rho^{2-\gamma}}{\gamma-1}u,\]
\[RH_{(\eta)}=v(\rho_1-\rho)+[(u-\xi)(u-u_1)+(v-\eta)v]\frac{\rho^{2-\gamma}}{\gamma-1}v.\]
Combine these with 
\[\phi_T=uv+(u_1-u)v=u_1v, \] 
and RH condition, we know on right-hand side of  (\ref{tangentialderivativeofRHonshock}) 
\[RH_\phi\phi_T+RH_{(T)}=0.\]
So along \Gammashock 
\begin{equation}
0=RH_T=RH_u u_T+RH_v v_T.\label{reducedtangentialderivativeofRHonshock}
\end{equation}
If $S$ achieves maximum or minimum at some interior point of \Gammashock, at this point:
\begin{equation}D_TS=S_u u_T+S_v v_T=0,\label{tangentialderivativeofSonshock}
\end{equation}
We put (\ref{tangentialderivativeofSonshock}) (\ref{reducedtangentialderivativeofRHonshock}) and linear equation of $\phi$ (\ref{linearequationofphi}) together in matrix form:
\[
\left(\begin{array}{ccc}
       RH_u v&RH_v v+RH_u (u_1-u)&RH_v(u_1-u)\\
       S_u v   &S_v v+S_u(u_1-u)        &S_v(u_1-u)\\
       a_{11}   & 2a_{12   }                      &a_{22}
         \end{array}
\right)
\left(\begin{array}{c}
      u_\xi \\  v_\xi \\v_\eta
         \end{array}
\right)=0.
\]
Plug $S_u=\frac{v}{(u_1-u)^2}, S_v=\frac{1}{u_1-u}$ into the $3\times3$ matrix above, with computation we get its determinant is
\begin{align*}
       &-(RH_u(u-u_1)+RH_v v)(a_{11}-2a_{12}\frac{v}{u_1-u}+a_{22}\frac{v^2}{(u_1-u)^2})\\
\leq&-\rho[(u-u_1)^2+v^2](1-\frac{(u-\xi)^2+(v-\eta)^2}{c^2})(a_{11}-2a_{12}\frac{v}{u_1-u}+a_{22}\frac{v^2}{(u_1-u)^2})<0
\end{align*}
So at such minimum or maximum point, $D^2\phi=0,\  S_{\vn}=0$, which contradicts with Hopf lemma, since $S$ satisfies (\ref{equationofS}) in $\Omega$.
\statementstarts{So $S$ can not achieve a minimum or maximum at any interior point of \Gammashock}\label{maxminofSonshock}.
\statementends
Then if $S$ achieves maximum or minimum at some interior point of \Gammawedgeplus, at this point we have:
\[ D_\vt\phi_{\vn}=0, \  D_\vt S=0,\   \linearequationofphi,\] in above $\vt=(\sin(\sigma+\delta),\cos(\sigma+\delta))$, again we write them  in matrix form:
\[
\left(\begin{array}{ccc}
       \sin\spd\cos\spd& 	  \cos^2\spd-\sin^2\spd&		-\sin\spd\cos\spd\\
       S_u\sin\spd 		  &S_v \sin\spd+S_u\cos\spd       &S_v\cos\spd\\
       a_{11}   				 & 2a_{12   }              			        &a_{22}
         \end{array}
\right)
\left(\begin{array}{c}
      u_\xi \\  v_\xi \\v_\eta
         \end{array}
\right)=0,
\]
with computation we get the determinant of  the $3\times 3$ matrix above is:
\begin{align*}
   &(a_{11}\cos^2\spd-2a_{12}\sin\spd\cos\spd+a_{22}\sin^2\spd)(S_v \cos\spd+S_u\sin\spd)\\
=&(a_{11}\cos^2\spd-2a_{12}\sin\spd\cos\spd+a_{22}\sin^2\spd)\frac{u_1\cos\spd}{(u_1-u)^2}>0.
\end{align*}
In above we used $D_{\vn}\phi=v\sin\spd-u\cos\spd=0$.
Again we get at such a maximum(or minimum) point $D^2\phi=0, $ so $ D_{\vn}S=0$, which is a contradiction to Hopf lemma.
With same method we can show $S$ cannot achieve maximum or minimum on $(\gammawedgeminus)^{\circ}$

Combine with (\ref{maxminofSonshock}), we know $S$ can only achieve maximum or minimum on $\overline\gammasonic\cup \MC$, so
 \begin{align} -C_p\sigma\leq\frac{v_2^-}{u_1-u_2^-}<S=\frac{v}{u_1-u}<\frac{v_2^+}{u_1-u_2^+}\leq C_p \sigma, \text{ in }\Omega.
\label{estimateofS}\end{align}
(\ref{estimateofS}) implies, providing $\sigma$ small enough, if $\vn=(n_1,n_2)$ is the unit normal direction of \Gammashock, pointing rightward, then 
\begin{align}n_1>{1\over 2}, |n_2|\leq C_p\sigma.\label{estimateofnormaldirection}\end{align}
This implies,
\begin{equation}
{1\over\left|\text{slope of }\gammashock\right|}\leq C_p\sigma  \label{estimateofslopeofshock},
\end{equation}
so,
\statementstarts{ \Gammashock\ stays in a $C_p\sigma$ neighborhood of normal reflected shock.}\label{positionofshock}
\statementends
And (\ref{estimateofslopeofshock}) (\ref{regularityassumptionofshock}) also implies
\begin{equation}
\text{ {$\gammashock$ is a $C^1$ function of $\eta$,}}
\label{gammashockisaconefunctionofeta}
\end{equation}
 in any of 4 positions shown in Figure \ref{fig:NonVerticalShockhitsSymmetricWedge}, \ref{fig:VerticalShockhitsNonSymmetricWedge}, \ref{fig:Gammawedgepluscoincideswith+eta-Axis} and \ref{fig:easytocomputeRHconditionbecausenownormalishorizontal}.
Plug (\ref{estimateofnormaldirection}) into $RH$ condition:
\[[\rho(u-\xi,v-\eta)-\rho_1(u_1-\xi,-\eta)](n_1,n_2)=0,\]
we get,
\[|\rho(u-\xi)-\rho_1(u_1-\xi)|\leq C_p\sigma\rho|v-\eta|+C_p\\
\leq C_p\sigma\rho(u_1-u)+C_p\sigma\rho+C_p,\]
so,
\[(1-C_p\sigma)\rho u\geq \rho(\xi-C_p)-C_p,\]
which implies:
\begin{align} u\geq {-C_p}\text{, on } \gammashock ,\label{lowerboundofu}\end{align}
since $\rho\geq\rho_1$.
Combining (\ref{lowerboundofu}) with $|\frac{v}{u_1-u}|<C_p\sigma$ (\ref{estimateofS}), and $u<u_1$ (\ref{upperboundofu}), gives
\begin{equation}|v|<C_p\sigma\label{estimateofv}.\end{equation}

\begin{figure}
       \centering\label{graphofR}
    \includegraphics[height=3.5cm]{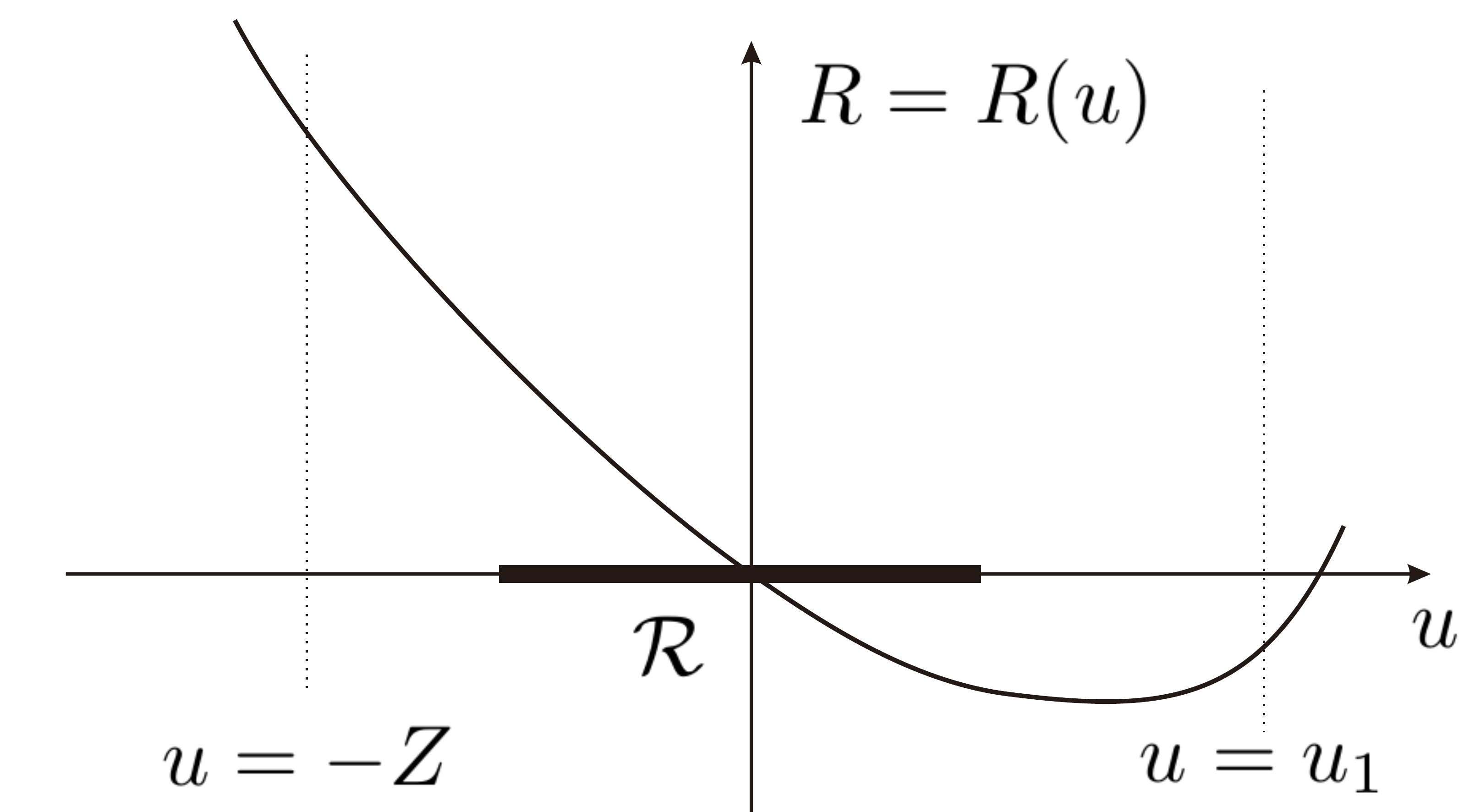}
    \caption{Graph of $R$}
\end{figure}
Now with estimate of $v$ in hand we want to get a same level estimate of $u$, to do so, we define: 
\begin{align}R(u)=\overline\rho_2 Z-(\overline\rho_2^{\gamma-1}-u Z-\frac{u^2}{2})^{1\over \gamma-1}(u+ Z),\label{definitionofR}\end{align}
here, $-Z$ is the $\xi$-coordinate of normal reflected shock, $\overline\rho_2$ is the density of normal shock reflection.
\begin{align}
\overline\rho_2Z=\rho_1(u_1+Z)  \label{RHfornormalshockreflection}
\end{align}
is the RH condition for normal shock reflection.
\begin{align}
\rho_0^{\gamma-1}-\overline\phi_2=\overline\rho_2^{\gamma-1} \label{Bernoullilawfornormalshockreflection}
\end{align}
is Bernoulli law for normal reflection.\\
Now in (\ref{rho}), replacing $\rho_0$ by $\overline\rho_2$ with (\ref{Bernoullilawfornormalshockreflection}) gives on \Gammashock
\begin{equation}\left(\overline\rho_2^{\gamma-1}+\overline\phi_2-\phi_1+u\xi+v\eta-\frac{u^2+v^2}{2}\right)^{1\over\gamma-1}=\rho\label{rhoexpressedbyrhotwo}
\end{equation}
Since we know , $|u|\leq C_p$ (\ref{upperboundofu}) (\ref{lowerboundofu}),  $|v|\leq C_p\sigma$ (\ref{estimateofv}), $\phi=\phi_1$ on \Gammashock (\ref{nonvorticityassumptiononshock}) and \Gammashock\ is close to normal reflected shock (\ref{positionofshock}), we can estimate
\begin{equation}
\overline\rho_2^{\gamma-1}-u Z-\frac{u^2}{2}=\rho^{\gamma-1} +O(\sigma) \label{rhorelatedtonormalreflectionbyBernoulli}.
\end{equation}

(\ref{rhorelatedtonormalreflectionbyBernoulli}) implies:
first, that 
\begin{equation}\overline\rho_2^{\gamma-1}-u Z-\frac{u^2}{2}>\rho_1^{\gamma-1}-C_p\sigma>\frac{\rho_1^{\gamma-1}}{2}
 \ \ (\text{providing $\sigma$ small enough})\label{lowerboundofm}\end{equation}so $R(u)$ is well defined; second,
\begin{equation}\rho\leq C_p\text{, on \Gammashock.}\label{boundofrhoonshock}\end{equation}

With (\ref{estimateofnormaldirection}) and (\ref{estimateofv}), we can reduce RH condition on \Gammashock
\[[\rho(u-\xi,v-\eta)-\rho_1(u_1-\xi,-\eta)](n_1,n_2)=0,\]
 to
\[|\rho(u-\xi)-\rho_1(u_1-\xi)|\leq C_p\sigma.\]
Plug in (\ref{positionofshock}) (\ref{rhorelatedtonormalreflectionbyBernoulli}) and (\ref{RHfornormalshockreflection}), and use (\ref{boundofrhoonshock}), RH condition can be further reduced to
\begin{equation}
|R(u)|=\left|\left(\overline\rho_2^{\gamma-1}-uZ-\frac{u^2}{2}\right)^{1\over \gamma-1}(u+Z)-\overline\rho_2 Z\right|\leq C_p\sigma\label{estimateofR}
\end{equation} 
When $u<-Z$, $R(u)\geq \overline\rho_2 Z$. So with estimate (\ref{estimateofR}), we know on \Gammashock, $u>-Z$. And we also know $u<u_1$ (\ref{upperboundofu}), so on \Gammashock, 
\begin{equation}-Z<u<u_1.\label{roughboundofu}\end{equation}

Then for convenience, define
\[m(u)=\overline\rho_2^{\gamma-1}-u Z-\frac{u^2}{2}.\]

And denote the range of $u$ on \Gammashock\ by $\mathcal{R}$, precisely let $\MR$ be the minimal subset of $\mathbb{R}$ s.t. for every point $p\in$\Gammashock, $u(p)\in \mathcal{R}$. Since $u$ is continuous on \Gammashock, $\mathcal{R}$ is a connected interval. And (\ref{roughboundofu}) equals $\mathcal{R}\subset[-Z,u_1]$.

\begin{align*}
\frac{dR}{du}&=\frac{m^{2-\gamma\over\gamma-1}}{\gamma-1}(u+ Z)^2-m^{1\over \gamma-1}\\
                        &=\frac{m^{2-\gamma\over\gamma-1}}{\gamma-1}((u+ Z)^2-(\gamma-1)m)
\end{align*}
so \[(\gamma-1)\frac{dR}{du}m^{\gamma-2\over\gamma-1}=\frac{\gamma+1}{2}(u+Z)^2-\frac{(\gamma-1)Z^2}{2}-(\gamma-1)\overline\rho_2^{\gamma-1}\] is a monotone function of $u$ on $\mathcal{R}$, and
\[
\frac{dR}{du}(0)=\frac{\overline\rho^{\gamma-1}_{2}}{\gamma-1}(Z^2-\overline\rho_2^{\gamma-1}(\gamma-1))\underset{(\ref{subsonicityofnormallyreflectedshock})}{<}-{1\over C_p}.
\]
So on $[- Z, 0], \ R'<-{1\over C_p}$.
Since $R$ is a smooth function of $u$ and $m(u)>{1\over C_p}$ (\ref{lowerboundofm}), 
\[\left|\frac{d^2 R}{du^2}\right|\leq C_p,\] 
so there exists $ \delta_0\geq{1\over C_p},$ s.t. 
\[R'<{-1\over C_p} \text{ on $[0,{\delta_0} ]$ }\Rightarrow  R(\delta_0)<{-1 \over C_p}.\]
And by (\ref{subsonicityofnormallyreflectedshock}) and (\ref{rhorelatedtonormalreflectionbyBernoulli}):
\[(\gamma-1)m(u)-(u+ Z)^2-(\eta-v)^2\geq -C_p\sigma\]
\[\Rightarrow \frac{dR}{du}=\left[(u+ Z)^2-(\gamma-1)m\right]\frac{m^{2-\gamma\over \gamma-1}}{\gamma-1}\leq C_p\sigma.\]
This means if we choose $\sigma$ small enough, then $R<-{1\over C_p}$ on $({\delta_0}, u_1)$.

And so $u\in [-Z,\delta_0]$, and in this interval, $R_u\leq {-1\over C_p}$. Since $|R|\leq C_p\sigma$ (\ref{estimateofR}) and $R(0)=0$,
\begin{equation}
|u|<C_p\sigma \text{, in } \Omega\label{estimateofu}
.\end{equation}
Then plug (\ref{estimateofu}) into (\ref{rhorelatedtonormalreflectionbyBernoulli}) we get
\begin{equation} |\rho-\overline\rho_2|\leq C_p\sigma \text{, in } \Omega\label{rhocomparewithnormalreflection}
.\end{equation}
So, we have reached conclusion of this subsection:
\begin{proposition}\label{propprop:smallnessestimate}
If there is a regular solution $\phi$, as defined in Definition \ref{def:definitionofregularsolution}, then the physical state of gas, described by this potential function, is close to that of normal shock reflection, precisely
\[|u|+|v|+|\rho-\overline{\rho}_2|\leq C_p\sigma, \ \text{in }\Omega,\]
and $\gammashock$ stays in a $C_p\sigma$ neighborhood of normal reflected shock.
\end{proposition}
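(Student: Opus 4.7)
The strategy is to stitch together the chain of estimates already developed in this subsection into a single smallness conclusion. The key observation is that the regular-reflection states (II$_\pm$) satisfy $|u_2^\pm|+|v_2^\pm|+|\rho_2^\pm-\overline\rho_2|\leq C_p\sigma$ by the construction in Section~\ref{incidentshocknormalandregularreflection}, and we want to propagate this smallness from the sonic arcs $\gammasonicpm$ and the corner $\MC$ into all of $\Omega$. The bridge is the slope function $S=\frac{v}{u_1-u}$, which is well defined once $u<u_1$ globally and whose smallness simultaneously controls the shock geometry and the vertical velocity $v$.

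The first reduction is to establish $u<u_1$ in $\overline\Omega$. Differentiating $a_{ij}\phi_{ij}=0$ in $\xi$ yields a linear elliptic PDE for $u$; interior extrema are ruled out by the maximum principle, extrema on $\gammawedgepm$ are ruled out by combining the slip condition $\phi_{\vn}=0$ with the PDE via a determinant computation and Hopf, extrema on $\gammashock$ are ruled out by applying Hopf to $\phi-\phi_1\leq 0$ (strict in $\Omega$ by (\ref{philessthanphioneinOmega}), zero on the shock), and on $\gammasonicpm$ we have $u=u_2^\pm<u_1$ for $\sigma$ small. Hence $S$ is well defined throughout $\overline\Omega$.

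The heart of the argument is then to derive a linear elliptic equation $a_{ij}S_{ij}+\hat b_i S_i=0$ for $S$ via (\ref{derivativeofS}), and rerun the maximum principle. Interior and wedge extrema are excluded by the same Hopf/determinant scheme; the subtle point is extrema on the free boundary $\gammashock$. The trick is to take a tangential derivative of the $RH$ functional on the shock, viewing it alternately as a function of $(\xi,\eta)$ and of $(\phi,u,v,\xi,\eta)$, and to observe the non-obvious cancellation $RH_\phi \phi_T + RH_{(T)}=0$, which follows from the identity $\phi_T=u_1 v$ on the shock combined with the RH relation itself. What remains, $RH_u u_T + RH_v v_T=0$, pairs with $S_u u_T + S_v v_T=0$ and the linear PDE to produce a $3\times 3$ system on $(u_\xi,v_\xi,v_\eta)$ whose determinant is strictly signed by ellipticity; this forces $D^2\phi=0$ and contradicts Hopf. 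Therefore $S$ attains its extrema only on $\overline\gammasonic\cup\MC$, where the smallness of $(u_2^\pm,v_2^\pm)$ delivers $|S|\leq C_p\sigma$ throughout $\Omega$.

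Finally, $|S|\leq C_p\sigma$ forces the shock normal to satisfy $n_1>1/2$, $|n_2|\leq C_p\sigma$, so $\gammashock$ lies in a $C_p\sigma$-neighbourhood of the normal reflected shock. Rewriting $\rho$ via Bernoulli in terms of $\overline\rho_2$ and substituting into RH reduces the shock condition to the scalar inequality $|R(u)|\leq C_p\sigma$ for $R$ as in (\ref{definitionofR}); the subsonicity $\overline c_2^2>Z^2$ gives $R'(0)<-1/C_p$, and uniform $C^2$-control on $R$ extends strict monotonicity to $[-Z,\delta_0]$ while a smallness argument using (\ref{subsonicityofnormallyreflectedshock}) excludes $u\in(\delta_0,u_1)$. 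With $R(0)=0$ this pins $|u|\leq C_p\sigma$ on $\gammashock$, and the maximum principle on the PDE for $u$ transfers the bound to the interior since all boundary values are now $O(\sigma)$. Then $|v|=|S||u_1-u|\leq C_p\sigma$ in $\Omega$, and Bernoulli with $|u|,|v|,|\phi-\overline\phi_2|$ all $O(\sigma)$ yields $|\rho-\overline\rho_2|\leq C_p\sigma$. The main obstacle is genuinely the shock-side maximum principle for $S$: the cancellation $RH_\phi\phi_T+RH_{(T)}=0$ only emerges after explicitly computing $RH_\phi,RH_{(\xi)},RH_{(\eta)}$ and invoking both $\phi_T=u_1 v$ and the RH equation, and the ensuing determinant sign relies delicately on the ellipticity bound $|(u-\xi,v-\eta)|^2<c^2$.
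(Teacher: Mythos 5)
Your proposal is correct and follows essentially the same route as the paper: the maximum principle for $u$ giving $u<u_1$, the elliptic equation for $S=v/(u_1-u)$ with the cancellation $RH_\phi\phi_T+RH_{(T)}=0$ on the shock forcing the extrema of $S$ onto $\overline{\Gamma_{\text{sonic}}}\cup\mathcal{C}$, and then the monotonicity of $R(u)$ to pin down $u$ on the shock. The only (minor) ordering point is that the reduction of the RH condition to $|R(u)|\leq C_p\sigma$ already requires the crude bounds $u\geq -C_p$ and $|v|\leq C_p\sigma$ on $\Gamma_{\text{shock}}$, which the paper extracts from the RH condition together with $n_1>1/2$, $|n_2|\leq C_p\sigma$ and $\rho\geq\rho_1$ before introducing $R$; you should insert that step rather than deriving $|v|$ only at the end.
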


\subsection{Monotonicity of the Potential Function along Some Direction}
\label{monotonicityofpotentialfunction}

{\em In this subsection we first rotate wedge together with flow, such that upper part of \Gammawedge\ is $+\eta$-axis, as illustrated in Figure {\ref{fig:Gammawedgepluscoincideswith+eta-Axis}}. Later we need to change direction of wedge and flow. }

We denote:
\[\huone=u_1\cos(\sigma+\delta), \ \ \hvone=u_1\sin(\sigma+\delta)\]
 So the direction of upstream flow is $(\huone,\ \hvone)$.

First we want to show  $u$ cannot achieve minimum at any interior point of \Gammashock.

If $u$ achieves minimum at some interior point on \Gammashock, then at this point (denoted by $M$),
\begin{equation}{u_\xi>0}\label{uxigreaterthanzero}.
\end{equation}
This is because $u$ satisfies a linear elliptic equation (\ref{linearequationofu}), \Gammashock\ is a $C^1$ curve (Assumption (\ref{regularityassumptionofshock})), and $\left|\frac{1}{\text {slope of shock}}\right|\leq C_p\sigma$ (\ref{estimateofslopeofshock}), so we can apply Hopf maximum principle to $u$ at $M$.

At this minimum point of $u$ we should also have  
\begin{equation}u_T=0, \text{\   here }T=(v-\hvone,\huone-u),\label{uTequalszero}
\end{equation}

We put $u_\xi$, (\ref{uTequalszero}) (\ref{linearequationofphi}) together in matrix form:
\[\left(\begin{array}{ccc}
          1                &0		&0\\
	  v-\hat v_1&\hat u_1-u&0\\
           a_{11}      & 2a_{12}  &a_{22}
                     \end{array}\right)
\left(\begin{array}{c}
    u_{\xi}\\u_{\eta}\\v_{\eta}
   \end{array}\right)
                                                  =       \left(\begin{array}{c}
                                                            u_\xi\\0\\0
                                                             \end{array}\right)
\]
Solve above linear equation  of $(u_\xi,u_\eta,v_\eta)$, we get
\begin{equation}\left(u_\xi , u_\eta , v_\eta\right)
=u_\xi\left(1,\frac{v-\hat v_1}{u-\hat u_1},-\frac{a_{11}}{a_{22}}-2\frac{a_{12}}{a_{22}}\frac{v-\hat v_1}{u-\hat u_1}
\right)\label{secondorderderivativeatminimumpointofuonshock}
\end{equation}

Then we can compute the sign of 
\[D_T\hat S=D_T(\frac{v-\hat v_1}{\hat u_1-u}), \ (\text{ here, we define } \hat S=\frac{v-\hat v_1}{\hat u_1-u})\]
Plug (\ref{secondorderderivativeatminimumpointofuonshock}) and
 \[\text{ }\hat S_u=\frac{v-\hvone}{(\huone-u)^2},\ \hat S_v=\frac{1}{\huone-u} ,\]
into:
\[D_T\hat S=\hat S_u u_\xi(v-\hvone)+\hat S_v v_\xi(v-\hvone)+\hat S_u u_\eta(\huone-u)+\hat S_v v_\eta(\huone-u)\]
we get
\[ D_T\hat S=u_\xi\left(-\hat S^2+2\frac{a_{12}}{a_{22}}\hat S-\frac{a_{11}}{a_{22}}\right)<0\]



This means at this minimum point of $u$, shock bends against upstream flow direction, as shown in Figure \ref{fig:sliding}.
So the tangent line $l_T$ of \Gammashock\ at M,  must separate domain $\Omega$ into three parts, and when we slide this line leftwards it must contact domain $\Omega$ at some interior point of \Gammashock\ (denote this point by $N$).

\begin{figure}

 \centering%
 \includegraphics[height=5.5cm]{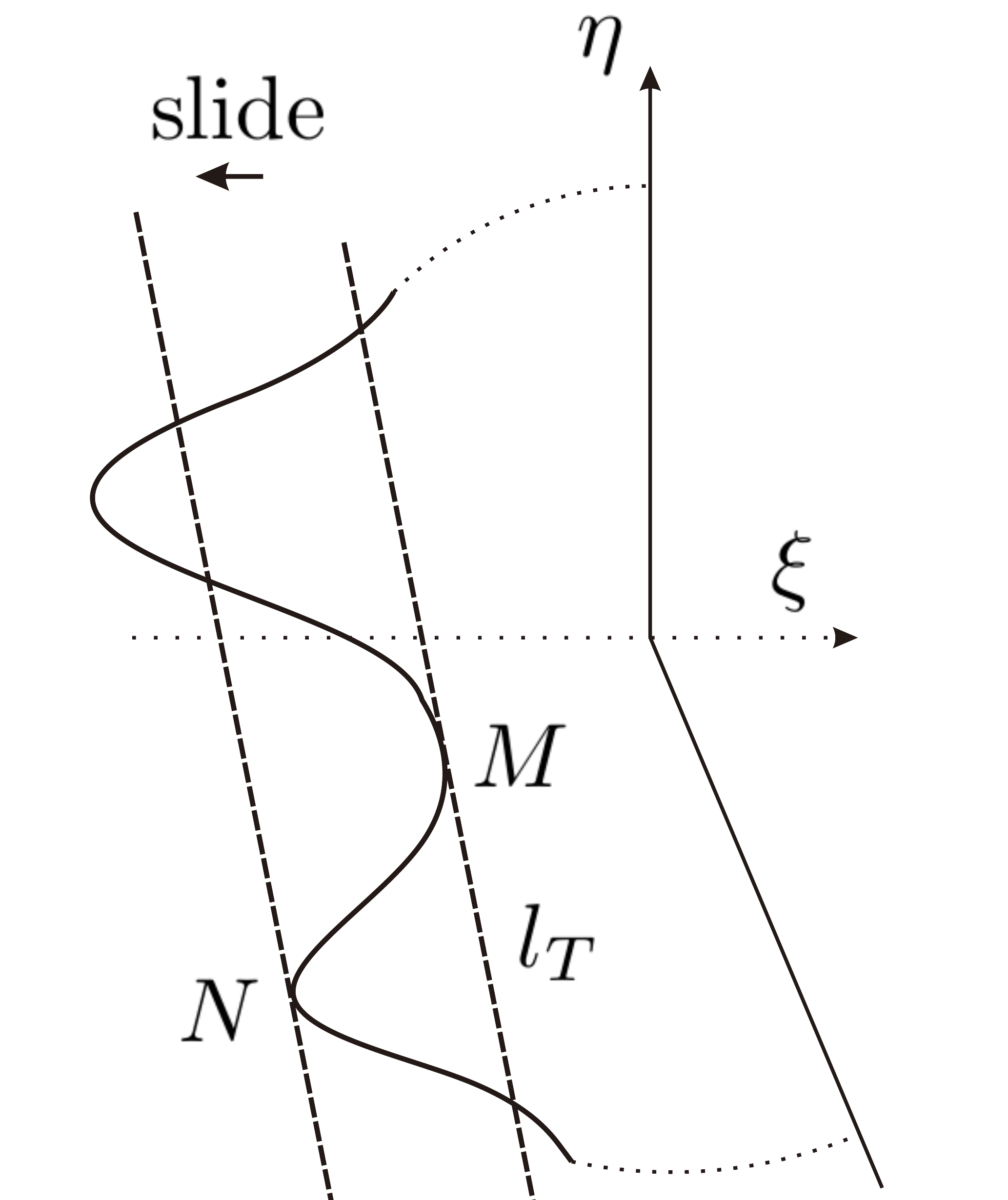}
   \caption{Sliding Tangent Line}
  	 \label{fig:sliding}
\end{figure}
Then we compare these two points $M$ and $N$. For convenience, denote $\partial_\xi \phi=u,\partial_\eta \phi=v$ and $\rho,\  \xi,\ \eta$ at $M, N$ will be denoted as $u_M,v_M,\rho_M,\xi_M,\eta_M$ and $u_N,v_N,\rho_N,\xi_N,\eta_N$ respectively.

We define \[k\triangleq\frac{\hvone-v_M}{\huone-u_M}=\frac{\hvone-v_N}{\huone-u_N}.\]
And direct computation shows
\begin{align*}
  &u_M+kv_M\\
=&u_M+k(-k\huone+ku_M+\hvone)\\
\leq&u_N+k(-k\huone+ku_N+\hvone)\\
=&u_N+kv_N.
\end{align*}

{\em Now we rotate wedge and flow s.t. $l_T$ parallels to $\eta$-axis. }

For simplicity, after this rotation we still use notation in previous part of this subsection. So we still have \begin{equation}\partial_\xi\phi(M) =u_M\leq\partial_\xi\phi(N)=u_N\label{uMleqthanuN}.
\end{equation}

Since, now, the tangent lines at $M, N$ are vertical, RH condition becomes
\begin{equation}
\rho_1(\hat u_1-\xi_M)=\rho_M(u_M-\xi_M);\ \ \rho_1(\hat u_1-\xi_N)=\rho_N(u_N-\xi_N)\label{RHatMN}.
\end{equation}
Plugging state parameters of State(\RN{1}) into (\ref{sonicspeed}), gives at $M$ and $N$
\begin{align}
\rho_0^{\gamma-1}=\rho_1^{\gamma-1}+\phi_1-\hat u_1\xi-\hat v_1\eta+{\hat u_1^2+\hat v_1^2\over 2}
				   =\rho^{\gamma-1}+\phi- u\xi- v\eta+{ u^2+v^2\over 2}\label{BernoullilawatMN}.
\end{align}
Since at $M, N$, $\hvone=v$ and $\phi_1=\phi$, (\ref{BernoullilawatMN}) can be reduced to 
\begin{align}
\rho^{\gamma-1}=\rho_1^{\gamma-1}+\frac{\huone^2}{2}-(\huone-u)\xi-\frac{u^2}{2}\label{reducedBernoullilawatMN}.
\end{align}

Then plug (\ref{reducedBernoullilawatMN}) into (\ref{RHatMN}), we get at $M, N$
\[-(\rho_1^{\gamma-1}+\frac{\huone^2}{2}-\huone\xi+u\xi-{u^2 \over 2})^{1\over \gamma-1}(u-\xi)+\rho_1(\huone-\xi)=0.\]

Define:
\[R(u,\xi)=-(\rho_1^{\gamma-1}+\frac{\huone^2}{2}-\huone\xi+u\xi-{u^2 \over 2})^{1\over \gamma-1}(u-\xi)+\rho_1(\huone-\xi),\]
then
 $R(u_M,\xi_M)=R(u_N,\xi_N)=0$.
\[R_u=-\rho\left[1-\frac{(\xi-u)^2}{(\gamma-1)\rho^{\gamma-1}}\right]=-\rho\left[1-\frac{(\xi-u)^2}{c^2}\right]<0,\]
Inequality above follows from  (\ref{subsonicityofnormallyreflectedshock}). Note that the rotation we did in later part of this section didn't move shock much, since we have estimated $|u|,|v|\leq C_p\sigma$ (\ref{estimateofu}) (\ref{estimateofv}), so \Gammashock\ still stays in a $C_p\sigma$ neighborhood of \Gammashock, after the rotation.  

So when $u$ closes to $0$ and $\xi$ closes to $- Z$ we can consider $u$ as a function of $\xi$, s.t. $R(u(\xi),\xi)=0$. And since
\[ R_\xi=-\frac{\rho^{2-\gamma}}{\gamma-1}(-\huone+u)(u-\xi)+\rho-\rho_1>0,\]
we get $\partial_\xi u>0$,  which implies $u_M>u_N$, and contradicts with $u_M\leq u_N$(\ref{uMleqthanuN}).

This means the minimum of $u$ cannot be achieved at any interior point of \Gammashock. 

{\em Now we rotate back to the position used at the beginning of this subsection}.

  Argument in section \ref{gradientestimate} shows $u$ cannot achieve its minimum in $\Omega$ and at interior point of $\gammawedgeminus$. So
\begin{equation}
u\geq\underset{\gammasonicplus\cup\gammasonicminus\cup \gammawedgeplus}{\min }u=\min\{u_2^+=0, u_2^-, 0\}=0
\label{ugeqthancero}
\end{equation}

Similar argument works when we rotate wedge, s.t. \Gammawedgeminus\ coincides with $\eta$ axis, so we get the following conclusion of this subsection:
\begin{proposition}
Let $T_\pm$ denote the tangent vector of $\gammawedgepm$ respectively, with $T_+$ pointing upwards and $T_-$ pointing downwards. If a vector $\vgamma$ satisfies
\[\vgamma\cdot T_\pm\geq 0,\]
then,
\[\phi_{\vgamma}\geq0,\ \text{in} \  \overline\Omega,\]
for any regular solution $\phi$.
\end{proposition}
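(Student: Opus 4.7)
The plan is to deduce the proposition from two applications of the monotonicity $u \geq 0$ just established, followed by a convex-cone argument.

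First, in the orientation of Figure \ref{fig:Gammawedgepluscoincideswith+eta-Axis} where $\gammawedgeplus$ coincides with the $+\eta$-axis, we have $T_+ = (0,1)$, and the vector $(1,0)$ is perpendicular to $T_+$. Since $\gammawedgeminus$ leaves the corner into the right half-plane, $T_-$ has a positive horizontal component, so $(1,0)\cdot T_- > 0$; together with $(1,0)\cdot T_+ = 0$, this places $(1,0)$ on one of the two extreme rays of the cone $\{\vgamma:\vgamma\cdot T_\pm \geq 0\}$. The bound $u = \phi_\xi \geq 0$ established in (\ref{ugeqthancero}) is then precisely the conclusion of the proposition for this extreme direction.

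Second, I would rotate the wedge-and-flow configuration so that $\gammawedgeminus$ (rather than $\gammawedgeplus$) coincides with the rotated $+\eta$-axis, or equivalently reflect the picture across the axis of symmetry and rename. Every ingredient of the preceding analysis is symmetric under the exchange $\gammawedgeplus \leftrightarrow \gammawedgeminus$: the smallness estimates of Proposition \ref{propprop:smallnessestimate}, the sign and monotonicity properties of the shock function $R(u,\xi)$, the sliding tangent-line comparison between the two shock points $M$ and $N$, and the Hopf-lemma exclusions on both wedge sides and on $\gammashock$. Consequently the same chain of arguments carries over verbatim and yields the monotonicity of $\phi$ along the unit direction $\vgamma_2$ perpendicular to $T_-$ that lies in the cone, i.e., along the second extreme ray (on which $\vgamma\cdot T_- = 0$).

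Third, the convex cone $\{\vgamma:\vgamma\cdot T_\pm \geq 0\}$ is generated by its two extreme rays, so any admissible $\vgamma$ can be written $\vgamma = \alpha\,\vgamma_1 + \beta\,\vgamma_2$ with $\alpha,\beta \geq 0$, where $\vgamma_1 = (1,0)$ (perpendicular to $T_+$) and $\vgamma_2$ (perpendicular to $T_-$) are the two extreme directions. Linearity of the directional derivative gives
\[
\phi_{\vgamma} = \alpha\,\phi_{\vgamma_1} + \beta\,\phi_{\vgamma_2} \geq 0 \quad \text{in } \overline\Omega,
\]
which is the desired conclusion.

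The main obstacle is the bookkeeping in the second step. Although the argument is morally symmetric in the upper and lower parts, one has to verify that the sign conventions for $u_2^\pm$ and $v_2^\pm$, the choice of the tangent direction $T$ along $\gammashock$ used in (\ref{uTequalszero}), and the direction of sliding of the tangent line $l_T$ are all consistent after the reflection. Once these are handled case by case, the third step is pure linear algebra and the proof is complete.
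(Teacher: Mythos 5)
Your proposal is correct and follows essentially the same route as the paper: the paper establishes $\phi_\xi\geq 0$ in the orientation where $\Gamma_{\text{wedge}}^+$ is the $+\eta$-axis, invokes the symmetric argument with $\Gamma_{\text{wedge}}^-$ rotated to the $\eta$-axis, and (implicitly) concludes for the whole cone by taking nonnegative combinations of the two extreme directions. Your write-up merely makes the extreme-ray decomposition and the reflection bookkeeping explicit.
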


\subsection{Estimates of the RH function}\label{estimateofRHfunction}

 {\em In this subsection, at first, the direction is chosen s.t. the upstream flow is $(u_1,0)$, as illustrated in Figure {\ref{fig:NonVerticalShockhitsSymmetricWedge}}, later for convenience we will need to change the direction of  wedge and flow. }\\
Consider 
\begin{equation}G=RH-(\rho-\rho_1)(\phi_1-\phi)\label{G}
\end{equation}
In subsection \ref{gradientestimate}, RH is defined (\ref{RH}) (\ref{rho}). And we stated there, that RH can be considered either as a function of $\xi,\eta$ or a function of five variables $u, v, \phi, \xi, \eta$, and when considered as a 5-variable- function, $\xi(\eta)$-derivative of RH is denoted as $RH_{(\xi)}(RH_{(\eta)})$ respectively. Here we adopt the idea and notation.

Pure algebraic computation gives:
\begin{equation}
G_{\phi}=RH_\phi+(\rho-\rho_1)+(\phi_1-\phi)\frac{\rho^{2-\gamma}}{\gamma-1}\label{algebraicphiderivativeofG}
\end{equation}
\begin{equation}
G_{(\xi)}=RH_{(\xi)}-(\rho-\rho_1)u_1-(\phi_1-\phi)\frac{\rho^{2-\gamma}}{\gamma-1}u\label{algebraicxiderivativeofG}
\end{equation}

Plug (\ref{algebraicphiderivativeofG}) and (\ref{algebraicxiderivativeofG}) into 
\begin{equation}
G_{\xi}=G_uu_{\xi}+G_vv_\xi+G_\phi u+G_{(\xi)} \label{xiderivativeofG},
\end{equation}
we find, on right-hand side of (\ref{xiderivativeofG}), $G_\phi u+G_{(\xi)}$ disappeared. So (\ref{xiderivativeofG}) can be reduced to 
\begin{equation}
G_\xi=G_uu_\xi+G_vv_{\xi} \label{reducedxiderivativeofG}.
\end{equation}
And replace $\xi$ by $\eta$, we get
\begin{equation}
G_\eta=G_uu_\eta+G_vv_{\eta} \label{reducedetaderivativeofG}.
\end{equation}

Given $\sigma$ small enough
\begin{align}
G_u&=RH_u-(\phi_1-\phi)\rho_u\nonumber\\
        &<\rho u_1\left(\frac{(\xi-u)^2}{c^2}-1\right)+(\rho_1-\rho)(\xi+ Z)-u_1(\xi+ Z)\frac{\rho}{c^2}(\xi-u)+C_p\sigma\nonumber\\
        &=\rho u_1\left(\frac{(- Z-u)(\xi -u)}{c^2}-1\right)+(\rho_1-\rho)(\xi+ Z)+C_p\sigma\nonumber\\
	&\leq\overline\rho_2u_1\left(\frac{Z^2}{\overline c_2^2}-1\right)+C_p\sigma\nonumber\\
        &<-C_p\label{negativeupperboundofGu}.
\end{align}
Now, to G, we can copy what we did to $S$, in subsection \ref{gradientestimate} (from (\ref{derivativeofS}) to (\ref{equationofS})), simply replace $S$ by $G$, and get there exist $\hat b_i$'s $(\hat b_i\in C^0(\overline\Omega\setminus(\overline{\gammasonic}\cup\overline{\MC}))$, s.t. 
\begin{equation}
a_{ij}G_{ij}+\hat b_i G_i=0,\label{equationofG}
\end{equation}
\statementstarts{so $G$ cannot achieve its minimum in $\Omega$.}\label{Gnointeriorminimum}
\statementends

We want to show $G$ can not achieve its minimum on \Gammawedge. To do so, we compute $G_{\vn}$ on \Gammawedgeplus
, here $\vn$ is the outer normal direction of $\Omega$ on \Gammawedgeplus. 

If we do the computation directly, it will be time consuming, and the idea is not clear. {\em So, we rotate wedge and flow s.t.  \Gammawedgeplus\ is $+\eta$ axis.} Now $G_{\vn}$, in original coordinate, becomes $G_\xi$. And in new coordinate, 
\begin{align}
G=[\rho(u-\xi,v-\eta)-\rho_1(u_1\cos(\sigma+\delta)-\xi, u_1\sin(\sigma+\delta)-\eta)]\nonumber\\
\cdot(u-u_1\cos(\sigma+\delta), v-u_1\sin(\sigma+\delta))-(\rho-\rho_1)(\phi_1-\phi),\label{Ginnewcoordinate}
\end{align}
\begin{align}\text{on \Gammawedgeplus},\ \ \  u=v_\xi=0\ (\text{Neumann Condition}), \ \ \xi=0 \ (\text{\Gammawedgeplus\  is $+\eta$ axis})\label{conditionsongammawedgeplus}.
\end{align}
Plug (\ref{conditionsongammawedgeplus}) into  $\xi$ derivative of (\ref{Ginnewcoordinate}), we get on \Gammawedgeplus
\[G_\xi=-(\rho+\rho_1)u_1\cos(\sigma+\delta)u_\xi.\]
In equality above, we know $u_\xi<0$, since in last subsection {\ref{monotonicityofpotentialfunction}}, we found $u>0$ in $\Omega$, and $u=0$ on \Gammawedgeplus. 
So $G$ cannot achieve its minimum on \Gammawedgeplus, with similar method we can show $G$ cannot achieve minimum on \Gammawedgeminus. And by definition $G=0$ on \Gammashock,  by computation $G=0$ on $\gammasonic$. 
Combine these with (\ref{Gnointeriorminimum}), we get
\[ G>0 \text{, in }  \Omega.\]
So
\begin{equation} RH>(\phi_1-\phi)(\rho-\rho_1) \text{, in }  \Omega.\label{lowerboundofRH}
\end{equation}

We consider (\ref{lowerboundofRH}) as a ``coercive" estimate of RH boundary condition, and it will play an essential role in Section {\ref{symmetricestimate}} and \ref{antisymmetricestimate}.

\subsection{Convexity of the Shock}\label{convexityofshock}
{\em In this section, direction of upstream flow is $(u_1,0)$.}\\
Now we know
\[G>0 \text{, in } \Omega \text{ ,  }\]\[ G=0 \text{, on } \gammashock,\]
 $G$ satisfies a second order elliptic PDE (\ref{equationofG}) and \Gammashock\ is the graph of a $C^1$ function of $\eta$ (\ref{gammashockisaconefunctionofeta}), so by Hopf Lemma $G_\xi>0$ on \Gammashock .
So along \Gammashock, we have the following:
\[G_\xi>0, \ G_T=0,\  \linearequationofphi.\]
Again we can write them in matrix form:
\[
\left(\begin{array}{ccc}
         G_u   &   G_v                          &0                   \\
         G_u v&G_v v+G_u (u_1-u)&G_v (u_1-u)\\
         a_{11}&2a_{12}                       &a_{22}
        \end{array}
        \right)
                           \left(\begin{array}{c}
                           u_\xi              \\
                            u_\eta            \\
    	   	           v_\eta
 		           \end{array}
                         \right)
                                          =  \left(\begin{array}{c}
                    			      G_\xi          \\
                			      0          \\
    	   				       0
 		 			          \end{array}
                  				       \right)
\]
Note, that the first line is (\ref{reducedxiderivativeofG}). 
The determinant of this $3\times3$ matrix is
 \[D\triangleq(u_1-u)(a_{11}G_v^2-2a_{12}G_uG_v+a_{22}G_u^2).\] Solve above linear equation  of $u_\xi,\ u_\eta,\ v_\eta$, we get

\[
                           \left(\begin{array}{c}
                           u_\xi              \\
                            u_\eta            \\
    	   	           v_\eta
 		           \end{array}
                         \right)
                                          = \frac{G_\xi}{D}\left(\begin{array}{cc}
       						G_u(u_1-u)a_{22}-2a_{12}G_v(u_1-u)&-G_va_{22}\\
                                              G_v(u_1-u)a_{11}                                     &  a_{22}G_u\\
          					-G_u(u_1-u)a_{11}                                    &G_va_{11}-2G_u a_{12}
   					     \end{array}
  					      \right)
                             						              \left(\begin{array}{c}
                    									       1         \\
                									     -v          
 		 			     						     \end{array}
                  				   						    \right)
\]
With this result we can compute the sign of $D_T S$, here   $ S=\frac{v}{u_1-u},T=(v,u_1-u)$.

\begin{align*}
D_T S&=S_u u_\xi v+S_v v_\xi v+S_u u_\eta (u_1-u)+S_vv_\eta(u_1-u)\\
            &=S^2 u_\xi +2S u_\eta +v_\eta\\
            &=\frac{G_\xi}{D}[G_v v+G_u(u-u_1)](a_{11}-2a_{12}S+a_{22}S^2)
\end{align*}
Earlier results show $|v|\leq C_p\sigma$ (\ref{estimateofv}), $|u|\leq C_p\sigma$ (\ref{estimateofu}) and $G_u<-C_p$ (\ref{negativeupperboundofGu}).
So when $\sigma$ is small enough $D_T S>0$,  shock is convex.
\subsection{Comparison of $\phi$ and $\phi_2^{\pm}$}
 In this subsection, we prove $\phi\geq \phi_2^{\pm}$ in $\Omega$. To do so, we compare
\begin{itemize}
\item value of $\phi$ and $\phi_2^{\pm}$, on $\gammashock\cup\gammasonicplus$
\item derivative of $\phi $ and $\phi_2^{\pm}$, in $\Omega$.
\end{itemize}
And we only prove $\phi\geq\phi_2^{+}$, since $\phi\geq\phi_2^-$ follows symmetrically.

{\em In this subsection, direction of wedge and flow is as illustrated in Figure \ref{fig:picofzigzagwalk}, s.t. \Gammawedgeplus\ coincides with $+\eta-$axis}. We denote velocity of \Stateone\ as $(\huone, \hvone)=(u_1\cos(\sigma+\delta), u_1\sin(\sigma+\delta))$, and velocity of \Statetwoplus\ as $(u_2^+,v_2^+)$. 

First we compare $\phi$ and $\phi_2^+$ on $\gammashock\cup \gammasonicplus$.

\begin{figure}
       \centering
    \includegraphics[height=6cm]{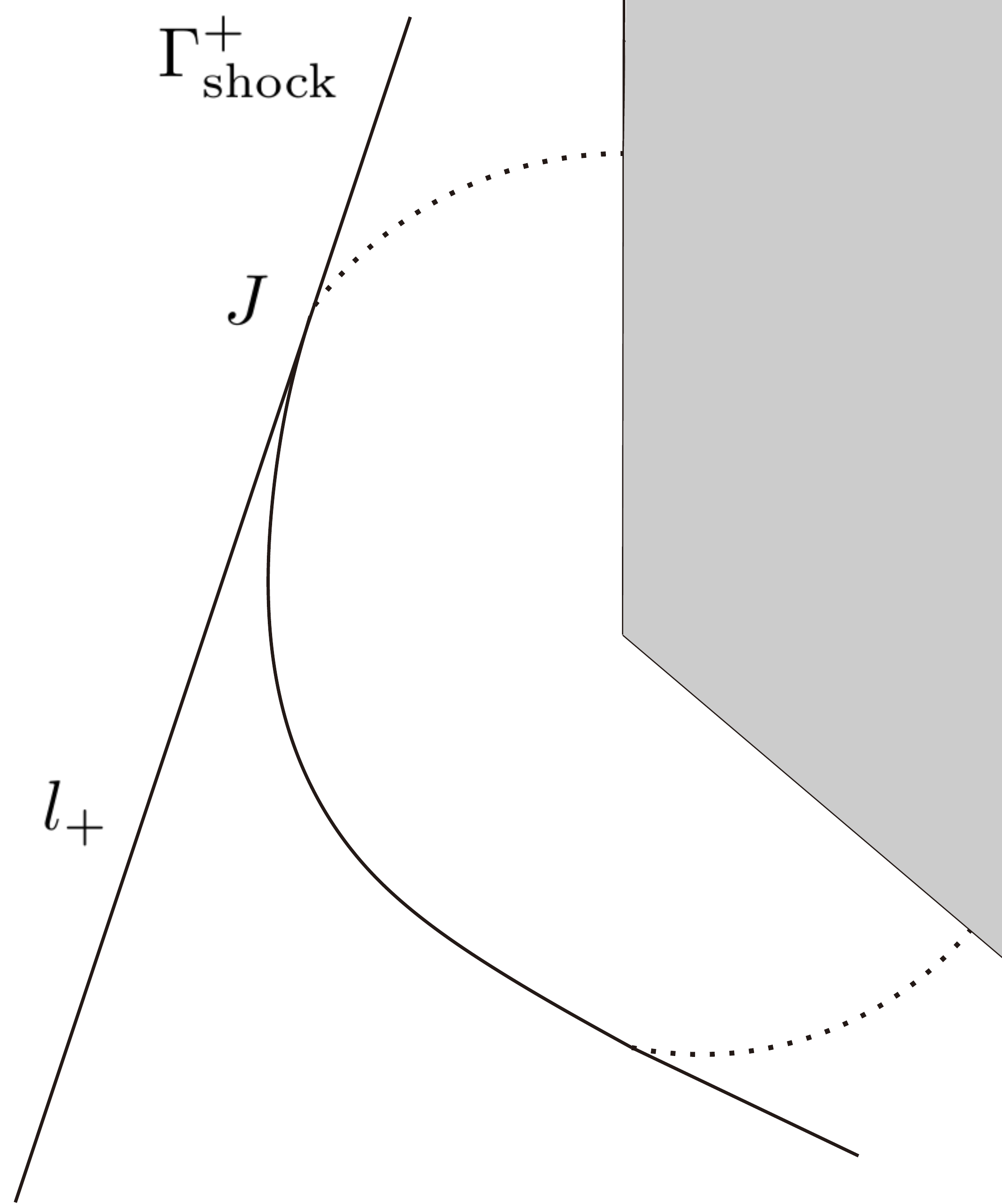}
\caption{Comparison of $\phi$ and $\phi_2^+$}\label{fig:picofzigzagwalk}
\end{figure}

Assumption (\ref{continuityofvelocityonsoniccircle}) implies, \Gammashock\ is tangent to \Gammashockplus\ at $J$ (here, $J\triangleq\overline{\gammashock}\cap\overline{\gammasonicplus}$). Here, recall that, \Gammashockplus\ is the regular reflected shock above $\xi$-axis. And in section \ref{convexityofshock} we proved that shock is convex.  So if we denote that  \Gammashockplus\ lay on a straight line $l_+$, then $l_+$ should be tangent to \Gammashock\ at $J$, and \Gammashock\ should stay right to $l_+$, as illustrated in Figure \ref{fig:picofzigzagwalk}. 

Since on \Gammashockplus, $\phi^+_2=\phi_1$, and $\phi_1, \phi_2^+$ are both linear functions, we have $\phi_1=\phi_2^+$
 on $l_+$. And since \[\phi_{1,\xi}=u_1\cos(\sigma+\delta)>0=u_2^+=\phi_{2,\xi}^+\]
$\phi_1\geq\phi_2^+$ on \Gammashock. Free boundary condition requires $\phi=\phi_1$ on \Gammashock\ (\ref{nonvorticityassumptiononshock}), so $\phi\geq\phi_2^+$ on \Gammashock.

By assumption $\phi=\phi_2^+$ on \Gammasonicplus\ (\ref{continuityofpotentialfunctiononsoniccircle}). So 
\begin{equation}
\phi\geq\phi_2^+ \text{\ \ on $\gammasonicplus\cup\gammashock$}
\end{equation}

Then, we compare velocity.

We know $u_2^+=0$, and analysis in section \ref{monotonicityofpotentialfunction} shows $u\geq0$ in $\Omega$ (\ref{ugeqthancero}), so $u\geq u_2^+$.

To $v$, we can apply analysis in section \ref{gradientestimate}(from (\ref{linearequationofphi}) to ({\ref{uwedgeplus}})),  only replacing $\xi$-derivative of (\ref{linearequationofphi}) by $\eta$-derivative of (\ref{linearequationofphi}), to assert that $v$ cannot achieve minimum or maximum in $\Omega$ or at interior point of \Gammawedgeplus\ and \Gammawedgeminus. 

Then, if $v$ achieves minimum or maximum at some interior point of $\gammashock$, at this point, 
\[v_T=0, (\text{here, }T=(\hvone-v, u-\huone))\]
Put this, (\ref{reducedtangentialderivativeofRHonshock}) and (\ref{linearequationofphi}) together:
\[
\left(\begin{array}{ccc}
0				&\hvone-v						&u-\huone\\
RH_u(\hvone-v)&RH_v(\hvone-v)+RH_u(u-\huone)&RH_v(u-\huone)\\
a_{11}			&2a_{12}						&a_{22}\end{array}\right)
\left(\begin{array}{c}  u_\xi\\v_{\xi}\\v_{\eta}   \end{array}\right)
=0\]
 Determinant of this 3$\times$3 matrix equals
\[-RH_u(a_{11}(u-\huone)^2-2a_{12}(\hvone-v)(u-\huone)+a_{22}(\hvone-v)^2)\]
In this subsection
\begin{equation}
RH=[\rho(u-\xi,v-\eta)-\rho_1(\huone-\xi,\hvone-\eta)](u-\huone,v-\hvone), \text{ on \Gammashock},
\end{equation}
 while $\rho$ is still expressed as (\ref{rho}).
Computation shows
\begin{align}
RH_u&=\frac{\rho^{2-\gamma}}{\gamma-1}(\xi-u)[(u-\xi)(u-\huone)+(v-\eta)(v-\hvone)]+\rho(2u-\huone-\xi)-\rho_1(\huone-\xi)\\
 	  &\leq C_p\sigma+u_1\overline{\rho}_2\underbrace{\left(\frac{Z^2}{\overline c_2^2}-1\right)}_{<-{1\over C_p}\text{(\ref{subsonicityofnormallyreflectedshock})}}+\underbrace{\overline\rho_2 Z-\rho_1(u_1+Z)}_{=0, \text{RH condition for normal shock reflection}}.
\end{align}

So when $\sigma$ small enough, if  the minimum or maximum of $v$ is  achieved at some point on $\gammashock$, then at this point, $D^2\phi=0$, $v_n=0$. This contradicts with Hopf Lemma, since $v$ would satisfy a linear elliptic equation without zero order term, similar to (\ref{linearequationofu}). 

So $v$ can only achieve its minimum on \Gammasonicplus\ and \Gammasonicminus. In section \ref{incidentshocknormalandregularreflection}, we have shown,  in current position, $v_2^-$ should be $\leq0$, while $v_2^+$ should be $\geq0$. So $v\leq v_2^+$ in $\overline \Omega$.

Now, we have proved, 
\begin{itemize}
\item $\phi_\xi=u\geq0=u_2^+=\phi_{2,\xi}^+,\ \phi_{\eta}=v\leq v_2^+=\phi_{2,\eta}^+ $   in $\overline\Omega$
\item $\phi\geq\phi_2^+$ on $\gammashock \cup \gammasonicplus$.
\end{itemize}

With above estimates, we can tell $\phi\geq\phi_2^+$ along $\overline\gammawedge$.  And computation gives
\[\phi^+_2<\phi^-_2, \  \text{on $\gammasonicminus$}
,\]
so $\phi_2^+<\phi$, on $\gammasonicminus$. So $\phi\geq \phi_2^+$, on $\partial \Omega$. 


Since $\phi_2^+-\phi$ satisfies a second order elliptic equation in $\Omega$, we can conclude 
\begin{equation}
\phi\geq\phi_2^+,\ \text{in } \Omega\label{comparisonofphiandphitwoplus}.
\end{equation}

\subsection{Elliptic Estimates away from the Sonic Circle}
In $\{|\eta|\leq {1\over 2}\sqrt{\overline c_2^2-Z^2}\}$ when $\sigma$ small enough,
\[\frac{|\nabla\varphi|^2}{c^2}\leq\frac{{1\over 2}(\overline c_2^2-Z^2)+Z^2+C_p\sigma}{\overline c_2^2-C_p\sigma}
                                                           =\frac{{1\over 2}(\overline c_2^2+Z^2)+C_p\sigma}{\overline c_2^2-C_p\sigma}<1-\frac{1}{C_p}.\]

\subsection{Derivative Estimate at Corner of Wedge}\label{holdergradientestimateatcornerofwedge}
Now with Proposition \ref{propprop:smallnessestimate}, we can control $|\nabla\phi|$ by $C_p$, so according to Lemma A.1 of \cite{CFHX}, there exists $0<\alpha<1$, which does not depend on angle of wedge s.t. we can take 
\[\omega(r)=C_pr^\alpha,\]
in Lemma 4.3 in \cite{CFHX}, and get:
\begin{proposition}\label{prop:gradientestimateatcornerofwedge}
Let $\phi$ be a regular solution in the sense of Definition \ref{def:definitionofregularsolution}, we can find $C_p$ and $\alpha$ $(0<\alpha<1)$, which do not depend on $\sigma$, s.t.
\begin{align}
|\nabla\phi|_{C^0}\leq C_pr^\alpha,\ \text{in }B_r(\MC)\cap\Omega;\\
|D^2\phi|_{C^0}\leq C_pr^{\alpha-1},\ \text{in }B_r(\MC)\cap\Omega.
\end{align}
for $r<\min\{{Z\over 2}, {Y\over 2}\}$.
\end{proposition}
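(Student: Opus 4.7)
The plan is to verify the hypotheses of the two cited results from \cite{CFHX} in a neighborhood of $\MC$ and apply them directly. The key preparatory observation is that, even though \eqref{quasilinearequationofphi} is only degenerate elliptic globally, it is \emph{uniformly} elliptic on a small ball around the corner, with constants independent of $\sigma$.

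First I would establish this uniform ellipticity on $B_r(\MC)\cap\Omega$ for $r<\min\{Z/2,Y/2\}$. By Proposition \ref{propprop:smallnessestimate}, throughout $\Omega$ one has $|u|+|v|+|\rho-\overline\rho_2|\leq C_p\sigma$, and both $\gammashock$ and $\gammasonic^\pm$ are $C_p\sigma$-perturbations of their normal-reflection counterparts $\{\xi=-Z\}$ and $\{\xi^2+\eta^2=\overline c_2^2\}$. In particular, for $\sigma$ small and the radius bound $r<\min\{Z/2,Y/2\}$, the ball $B_r(\MC)\cap\Omega$ is separated from $\gammasonic\cup\gammashock$, and one checks (exactly as in the short preceding subsection) that $|\nabla\varphi|^2/c^2\leq 1-1/C_p$ there. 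Combined with the $C^0$ bound on $\nabla\phi$ supplied by Proposition \ref{propprop:smallnessestimate}, this gives ellipticity constants and coefficient bounds for the linear form \eqref{linearequationofphi} that depend only on physical quantities, not on $\sigma$.

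Next I would assemble the two pieces of data needed at $\MC$: from subsection \ref{continuityofvelocityatcornerofwedge}, the regularity theorem of \cite{CFHX} gives $\nabla\phi(\MC)=0$; the homogeneous Neumann condition $\phi_{\vn}=0$ holds on both $\gammawedgepm$, whose tangent rays meet at $\MC$ with interior angle $\pi-2\sigma$. Lemma A.1 of \cite{CFHX} then yields an exponent $\alpha\in(0,1)$, depending only on the uniform ellipticity data and a lower bound of the opening angle, together with a modulus of continuity $\omega(r)=C_pr^\alpha$ for $\nabla\phi$ at $\MC$. Feeding this $\omega$ into Lemma 4.3 of \cite{CFHX} (a weighted Schauder-type estimate near the corner of a convex wedge with Neumann data) converts the modulus into the pointwise bounds $|\nabla\phi|\leq C_pr^\alpha$ and $|D^2\phi|\leq C_pr^{\alpha-1}$ claimed in the proposition.

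The main obstacle, and the only point where care is required, is the assertion that $\alpha$ can be chosen independently of $\sigma$. A priori the exponent produced by Lemma A.1 depends on both the ellipticity constants and the corner angle $\pi-2\sigma$. The first dependence is removed by the first paragraph above. The second is removed by observing that $\sigma$ ranges over $(0,\epsilon_T)$ with $\epsilon_T$ small, so the corner angle stays in a compact subinterval of $(0,\pi)$ bounded away from both endpoints; one may therefore take the $\alpha$ and $C_p$ associated to this compact range and apply the two cited lemmas uniformly in $\sigma$. No new estimates beyond Proposition \ref{propprop:smallnessestimate} and the cited Lipschitz-corner result of \cite{CFHX} are needed.
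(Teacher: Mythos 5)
Your proposal follows essentially the same route as the paper: use Proposition \ref{propprop:smallnessestimate} to get a $\sigma$-independent bound on $|\nabla\phi|$ (and, via the preceding subsection, uniform ellipticity near $\MC$), invoke Lemma A.1 of \cite{CFHX} to obtain the exponent $\alpha$ and the modulus $\omega(r)=C_pr^\alpha$, and then feed this into Lemma 4.3 of \cite{CFHX} to conclude. The paper states this in two lines; your additional remarks on uniform ellipticity and on the corner angle staying in a compact range are correct elaborations of why the constants are independent of $\sigma$, not a different argument.
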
  
\subsection{H\"older Gradient Estimates away from the Sonic Circle and the Corner of the Wedge }
\label{holdergradientestimateawayfromsoniccircleandcornerofwedge}
In this section we estimate H\"older norm of $\nabla\phi$ through quasiconformal mapping. Our method is a modification of the method in \cite{Ni} and Chap 12 of \cite{GT}.

For $ q\in\overline{\Omega}\cap\left\{|\eta|<\frac{Y}{4}\right\}\cap\left\{\xi+Z<\frac{Z}{4}\right\}$, 
consider
\[\Drq\triangleq\int_{B_r(q)\cap\Omega}u_\xi^2+v_\xi^2+u_\eta^2+v_\eta^2,\ \left(0<r<r_0\triangleq\min\left\{\frac{Y}{4},\frac{Z}{4}\right\}\right).
\]

\begin{align*}
\Drq\leq &2K\int_{B_r(q)\cap \Omega}(v_\xi^2-v_\eta u_\xi)\ (K\triangleq\frac{\Lambda}{\lambda}\leq C_p)\\
            = &2K\int_{B_r(q)\cap \Omega}\frac{v_\xi G_\eta-v_\eta G_\xi}{G_u}\ (\text{in denominator we consider $G$}\text{
                                             as a function of $u, v, \xi,\eta$,}\\ &\text{ while in numerator we consider $G$ as a function of $\xi,\eta$} )\\
          \leq&2C_p\int_{B_r(q)\cap \Omega}\nabla\cdot(v_\eta G,-v_\xi G)\ (\text{because $v_\xi^2-v_\eta u_\xi>0$, and  $G_u\leq-\frac{1}{C_p}$} (\ref{negativeupperboundofGu}))\\
          =&2 C_p\int_{\partial(B_r(q)\cap \Omega)}Gv_{\vt} \ \  \left((v_\eta,-v_\xi)\cdot \vn\triangleq v_{\vt}\right)\\
         = &2C_p\int_{(\partial B_r(q))\cap \Omega}Gv_{\vt}\ \ \left(\text{because }G\mid_{\gammashock}=0\right)\\
      \leq&C_p\left(2\pi r\int_{(\partial B_r(q))\cap\Omega}|\nabla v|^2 \right)^{1\over 2}.
\end{align*}
Above means \[\Drq^2\leq C_pr\frac{d\Drq}{d r}\ \Rightarrow \ \frac{d}{dr}\left[\frac{1}{\Drq}+\frac{\log r}{C_p}\right]\leq 0, \]
and after integral we get \[\Drq\leq \frac{C_p}{-\log{r\over r_0}}.\]

Then we estimate growth of $\Drq$ more precisely with\[\Drq\leq 2C_p\int_{(\partial B_r(q))\cap \Omega}Gv_{\vt}.\]
1) if $(\partial B_r)\cap\gammashock\neq\emptyset$
\begin{align*}
\Drq\leq &C_p\left( \frac{1}{\mu}\int_{(\partial B_r)\cap\Omega}|\nabla v|^2 +\mu\int_{(\partial B_r)\cap\Omega} G^2\right)\\
         \leq&C_p\left( \frac{1}{\mu}\int_{(\partial B_r)\cap\Omega}|\nabla v|^2 +\mu\int_{(\partial B_r)\cap\Omega} |\nabla G|^2 4\pi^2r^2\right)
             \ \ (G\mid_{\gammashock}=0)\\
         =     &C_pr\int_{(\partial B_r)\cap\Omega}|\nabla u|^2+|\nabla v|^2 \ \ (\text{take }\mu={1\over 2\pi r})\\
        =     &C_p r\frac{d\Drq}{dr};
\end{align*}
2) if $(\partial B_r)\cap\gammashock=\emptyset$
\[ \Drq\leq2 C_p\int_{(\partial B_r)\cap\Omega}(G-\overline G) v_{\vt},\]
again with method above we get\[ \Drq\leq C_p r\frac{d\Drq}{dr}.\]
Above result implies\[\Drq\leq\mathcal{D}(r_0/2,q)\left(\frac{r}{r_0/2}\right)^{1\over C_p}\leq C_p r^{1\over C_p}, \text{ for }r<r_0/2.
\]
Then with Lemma 7.16, Lemma 7.18 of \cite{GT} and the property that \Gammashock\ is convex we get:

\begin{proposition}
\label{prop:holdergradientestimateawayfromsoniccircleandcornerofwedge}For $\phi$, a regular solution in the sense of Definition \ref{def:definitionofregularsolution}, there exists $0<\chi<1$ and $C_p$, both do not depend on angle of wedge, s.t.
 \begin{equation}|\nabla\phi|_{\chi;\mathcal{F}}\leq C_p,
\label{equationofholdergradientestimateawayfromcornerofwedgeandsoniccircle}
\end{equation}
where we define 
\[\mathcal{F}\triangleq\left(\overline{\Omega}\cap\left\{|\eta|<\frac{Y}{4}\right\}\cap\left\{\xi+Z<\frac{Z}{4}\right\}\right).\]
\end{proposition}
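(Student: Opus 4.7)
The plan is to establish a uniform power-type decay estimate $\Drq \leq C_p r^{1/C_p}$ for the Dirichlet-type energy of the velocity $(u,v)$ on balls centered at points $q\in\MF$, and then to upgrade this Morrey-type decay into H\"older regularity of $\nabla\phi$ via a Campanato--Morrey embedding.

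The driving identity is that, by the uniform ellipticity of the linearized equation on $\MF$, the pointwise quantity $u_\xi^2+v_\xi^2+u_\eta^2+v_\eta^2$ is controlled by $v_\xi^2 - v_\eta u_\xi$ up to a constant depending only on the ellipticity ratio $\Lambda/\lambda$. Using the coercivity $G_u \leq -1/C_p$ from (\ref{negativeupperboundofGu}) together with (\ref{reducedxiderivativeofG})--(\ref{reducedetaderivativeofG}), one rewrites this integrand as the divergence of the vector field $(v_\eta G,\,-v_\xi G)/G_u$. Applying the divergence theorem on $B_r(q)\cap\Omega$ and using the crucial fact that $G \equiv 0$ on $\gammashock$ (by the very definition of $G$ together with the RH condition (\ref{nonvorticityassumptiononshock})), the free-boundary contribution vanishes and only the integral over the circular arc $(\partial B_r(q))\cap\Omega$ survives.

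From this boundary integral, Cauchy--Schwarz produces a differential inequality of the shape $\Drq^2 \leq C_p r\,\partial_r\Drq$ when the ball meets the shock (giving first a logarithmic decay $\Drq \leq C_p/|\log(r/r_0)|$), and in a second step the stronger inequality $\Drq \leq C_p r\,\partial_r\Drq$: for balls meeting the shock one uses $G=0$ on $\gammashock$ to bound $G$ pointwise on the arc by $r\cdot|\nabla G|$, while for balls disjoint from the shock one subtracts the circular mean $\overline G$ and applies a Poincar\'e inequality on the circle. Integrating the resulting Gr\"onwall-type inequality from $r$ up to a fixed radius $r_0\asymp\min\{Z,Y\}$ yields the polynomial decay $\Drq \leq C_p r^{1/C_p}$.

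With the Morrey-type decay in hand, Lemmas 7.16 and 7.18 of \cite{GT} convert it into a uniform H\"older estimate $|\nabla\phi|_{\chi;\MF}\leq C_p$ with exponent $\chi>0$ independent of the wedge angle. The main technical subtlety I anticipate is ensuring that the boundary version of the embedding applies with constants independent of the wedge angle at points on $\gammashock$: this requires a uniform interior-cone (or Lipschitz) condition on $\partial\Omega$ near the shock, which is exactly what the convexity of $\gammashock$ proved in subsection \ref{convexityofshock} supplies, since a convex $C^1$ graph with bounded slope (recall (\ref{estimateofslopeofshock})) admits interior cones of a fixed opening angle, making all constants uniform in the small parameters $\sigma,\delta$.
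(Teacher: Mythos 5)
Your proposal follows the paper's argument essentially verbatim: the same Morrey-type decay for $\Drq$ driven by the ellipticity ratio, the coercivity $G_u\le -1/C_p$ and the vanishing $G|_{\gammashock}=0$, the same two-case refinement (Poincar\'e on the arc when the ball misses the shock, $|G|\le C r|\nabla G|$ when it meets it) to upgrade $\Drq^2\le C_p r\,\partial_r\Drq$ to $\Drq\le C_p r\,\partial_r\Drq$, and the same appeal to Lemmas 7.16 and 7.18 of \cite{GT} with the convexity of $\gammashock$ supplying uniform constants. The only nitpick is that $G_u$ is not constant, so the integrand is not literally the divergence of $(v_\eta G,-v_\xi G)/G_u$; rather one bounds it by $C_p\,\nabla\cdot(v_\eta G,-v_\xi G)$ using the nonnegativity of $v_\xi^2-v_\eta u_\xi$ together with $G_u\le -1/C_p$, which is exactly what the paper does.
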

\subsection{$C^{2,\chi}$ Estimate away from Sonic Circle and Corner of Wedge}
\label{C2chiestimateawayfromsoniccircleandcornerofwedge}
Now we consider divergence form equation of $G$.

Since $a_{ij}\phi_{ij}=0$, given any $h\in C_0^\infty(\Omega)$, we have
\begin{align*}
0=&\int_\Omega\left({a_{11}\over a_{22}}\phi_{\xi\xi}+{2a_{12}\over a_{22}}\phi_{\xi\eta}+\phi_{\eta\eta}\right)h_\xi\\
  =&\int_{\Omega}{a_{11}\over a_{22}}u_{\xi}h_{\xi}+{2a_{12}\over a_{22}}v_\xi h_\xi+u_\eta h_{\eta}.
\end{align*}
Then make use of first two lines of
\begin{equation}
\left(\begin{array}{c}u_\xi\\u_\eta\\v_\eta\end{array}\right)
={1\over D}\left(\begin{array}{cc}a_{22}G_u-2a_{12}G_v&-a_{22}G_v\\
								a_{11}G_v&a_{22}G_u\\
								-a_{11}G_u&a_{11}G_v-2a_{12}G_u\end{array}\right)
\left(\begin{array}{c}G_\xi\\G_\eta\end{array}\right),
\label{eq:relationofderivativeofuandderivativeofG}
\end{equation}
where $D=a_{11}G_v^2-2a_{12}G_uG_v+a_{22}G_u^2$, we get
\[\int_{\Omega}{a_{11}G_u\over D}G_\xi h_\xi+{a_{11}G_v\over D}G_\xi h_\eta+{-a_{11}G_v+2a_{12}G_u\over D}G_\eta h_{\xi}+{a_{22}G_u\over D}G_\eta h_\eta=0.\]
With computation we can tell above  equation is elliptic in $\Omega$, when $\sigma$ small($G_u\neq 0$).
And we know $G\mid_{\gammashock}=0$ and $|\gammashock|_{1,\chi}\leq C_p$
 (\ref{equationofholdergradientestimateawayfromcornerofwedgeandsoniccircle}), with Theorem 8.33 of \cite{GT}, we get
 \[|G|_{1,\chi;\mathcal{F}}\leq C_p,\]
where $\MF$ is defined in last subsection.

Since with (\ref{eq:relationofderivativeofuandderivativeofG}) we can represent $D^2\phi$ by derivatives of $G$, derivatives of $\phi$ and $\phi$, we can conclude:
\begin{proposition}\label{prop:C2chiestimateawayfromsoniccircleandcornerofwedge}
For $\phi$, a regular solution in the sense of Definition \ref{def:definitionofregularsolution}, and for $0<\chi<1$, whose existence is argued in last subsection, there exists $C_p$, s.t.
\begin{equation}
|\phi|_{2,\chi;\MF}\leq C_p.
\end{equation}
\end{proposition}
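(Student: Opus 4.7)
The plan is to upgrade the $C^{1,\chi}$ control of $\nabla\phi$ from Proposition \ref{prop:holdergradientestimateawayfromsoniccircleandcornerofwedge} to a $C^{2,\chi}$ control of $\phi$ by combining the divergence-form equation for $G$ derived in the preceding paragraph with the algebraic identity \eqref{eq:relationofderivativeofuandderivativeofG}, which recovers the Hessian of $\phi$ from $\nabla G$.

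First I would verify that the divergence equation for $G$ is uniformly elliptic with H\"older-continuous coefficients on $\MF$. The principal symbol $a_{ij}$ is uniformly elliptic there because $\MF$ is bounded away from the sonic circle (cf.\ \S 2.7); the denominator $D=a_{11}G_v^2-2a_{12}G_uG_v+a_{22}G_u^2$ is bounded below because $-G_u\geq 1/C_p$ by \eqref{negativeupperboundofGu}; and the numerators are bounded since $\nabla\phi$ is bounded via Proposition \ref{propprop:smallnessestimate}. Because each of these quantities is a smooth function of $(\phi,\nabla\phi,\xi,\eta)$ and $\nabla\phi\in C^{0,\chi}(\MF)$, the coefficients themselves belong to $C^{0,\chi}(\MF)$.

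Next I would apply the boundary H\"older gradient estimate for divergence-form equations (Theorem 8.33 of \cite{GT}) on the portion of $\MF$ adjacent to $\gammashock$, where the homogeneous Dirichlet data $G\equiv 0$ holds. The shock is a $C^{1,\chi}$ curve by Proposition \ref{prop:holdergradientestimateawayfromsoniccircleandcornerofwedge} together with the normal-direction bound \eqref{estimateofnormaldirection}. Since $\MF$ sits near $\xi\approx -Z$ and is therefore disjoint from the wedge (which lies in the half-plane to the right), the only portion of $\partial\MF$ that coincides with $\partial\Omega$ is this shock segment; the remainder of $\partial\MF$ is an artificial cutoff coming from the slab $|\eta|<Y/4$ and the half-plane $\xi+Z<Z/4$. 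Handling this cutoff by working in a slightly enlarged region $\MF'\supsetneq\MF$ and applying interior Schauder estimates away from $\gammashock$, I would conclude $|G|_{1,\chi;\MF}\leq C_p$.

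Finally I would invert \eqref{eq:relationofderivativeofuandderivativeofG}: its right-hand side expresses $u_\xi$, $u_\eta=v_\xi$, and $v_\eta$ as rational combinations of $G_\xi,G_\eta$ with denominator the uniformly positive $D$ and numerators that are H\"older-continuous functions of $(\phi,\nabla\phi,\xi,\eta)$. Since both $\nabla G$ and $\nabla\phi$ are now in $C^{0,\chi}(\MF)$, each factor on the right is in $C^{0,\chi}(\MF)$, so $|D^2\phi|_{\chi;\MF}\leq C_p$; combining with the previously established bounds on $\phi$ and $\nabla\phi$ yields the desired $|\phi|_{2,\chi;\MF}\leq C_p$. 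I expect the main obstacle to be the application of Theorem 8.33 in a region whose boundary is only partially physical: one must either carry out the estimate in the enlarged domain $\MF'$ and restrict, or localize via a cutoff function so that only the shock portion of the boundary enters the estimate; once this technicality is dispatched, the rest of the argument is a direct combination of uniform ellipticity, the coercive bound $-G_u\geq 1/C_p$, and the already-proved H\"older regularity of $\nabla\phi$ and of $\gammashock$.
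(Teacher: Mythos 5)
Your proposal follows essentially the same route as the paper: derive the divergence-form equation for $G$, use the uniform ellipticity coming from $-G_u\geq 1/C_p$ and the $C^{0,\chi}$ coefficients supplied by Proposition \ref{prop:holdergradientestimateawayfromsoniccircleandcornerofwedge}, apply Theorem 8.33 of \cite{GT} with the Dirichlet data $G=0$ on the $C^{1,\chi}$ shock, and then invert \eqref{eq:relationofderivativeofuandderivativeofG} to recover $D^2\phi$ from $\nabla G$. Your extra care about the artificial cutoff portion of $\partial\MF$ (enlarging to $\MF'$ and using interior estimates) is a reasonable way to make explicit a technicality the paper leaves implicit, but it does not change the argument.
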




\section{Perturbation of \Statetwo}\label{asymptotic}

In this section we show that some parameters of \Statetwopm\ are analytic functions of $\sigma$, and compute the derivative of these parameters w.r.t. $\sigma$ and $\delta$.

\subsection{Symmetric: Derivatives w.r.t. $\sigma$}
{\em In this subsection the position of flow and wedge is as in Figure \ref{fig:SymmetricCase}.} And we compute derivative of parameters of \Statetwo\ w.r.t.  $\sigma$.  If we change $\sigma$ to $\sigma+\delta$, results here apply to \Statetwoplus; and if we change $\sigma$ to $\sigma-\delta$, then results here apply to $\statetwominus$, after a reflection.
\subsubsection{Parameters of \Statetwo}

\begin{figure}
       \centering
    \includegraphics[height=7cm]{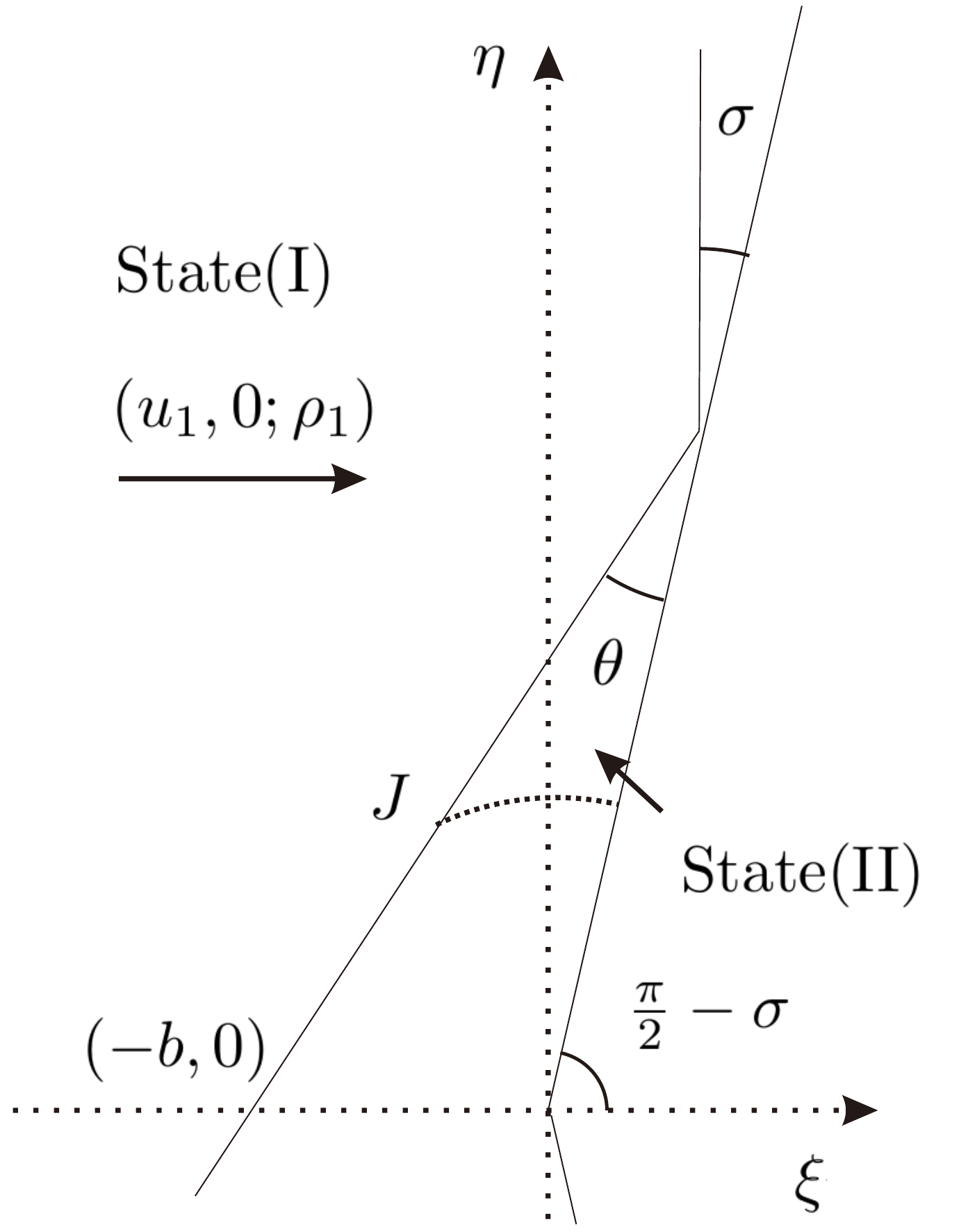}
\caption{Parameters of \Statetwo}
\label{fig:ParametersofStatetwo}
\end{figure}

We denote velocity and density of gas in  \Statetwo\ as $(u_2,v_2)$ and $\rho_2$; the point where  regular reflected shock intersects with sonic circle as $J$; the angle between regular reflected shock and $\gammawedge$ as $\theta$; the point where extension line of regular reflected shock intersects with symmetry axis of wedge as $(-b,0)$, as illustrated in Figure \ref{fig:ParametersofStatetwo}.

We have the following relations for $(v_2,\rho_2,b,\theta)$ and $\sigma$. 

\begin{gather}
\rho_2^{\gamma-1}+\frac{\tan^2\sigma+1}{2}v_2^2+v_2b\tan\sigma=\rho_1^{\gamma-1}+\frac{u_1^2}{2}+u_1 b
\label{symmetricBernoullilawatb}\\
{\centering v_2 \frac{\cos\theta}{\cos\sigma}=u_1\sin(\sigma+\theta)}
\label{nonvorticityalongregularreflectedshock}\\
(\rho_1(u_1+b)-\rho_2(v_2\tan\sigma+b))(u_1-v_2\tan\sigma)\nonumber\\
+\rho_2v_2^2=0
\label{massconservationatb}\\
X\tan(\theta+\sigma)=\tan\sigma(b+X)
\label{geometry}
\end{gather}

(\ref{symmetricBernoullilawatb}) is Bernoulli Law computed at $(-b,0)$, (\ref{nonvorticityalongregularreflectedshock}) means gas in \Stateone\ and \Statetwo\ have same tangential velocity along regularly reflected shock. (\ref{massconservationatb}) is Mass Conservation computed at $(-b,0)$, with $(u_1-u_2,-v_2)$ taken as normal direction of shock. (\ref{geometry}) is a fundamental geometric condition.

Computation shows for $\sigma$ small enough,  system (\ref{symmetricBernoullilawatb}) (\ref{nonvorticityalongregularreflectedshock}) (\ref{massconservationatb}) (\ref{geometry}) has a unique solution, which satisfies 
\[|b-Z|\leq C_p\sigma, |u_2|+|v_2|\leq C_p\sigma, |\rho_2-\overline\rho_2|\leq C_p\sigma, |\theta|\leq C_p\sigma,\] so
$ b,\  \theta,\  v_2,\ \rho_2 \text{ are analytic functions of } \sigma$. And at $\sigma=0$ \[\frac{d \theta}{d \sigma}=\frac{ Z}{X}, \ \frac{db}{d\sigma}=\frac{d\rho}{d\sigma}=0,\ \frac{d v_2}{d \sigma}=u_1(\frac{Z}{X}+1).\]

\subsubsection{Coordinates of the Intersection Point}
Then we compute the coordinate of $J$ where shock intersects with sonic circle, we have:
\begin{align}
(\xi_{J}-u_2)^2+(\eta_{J}-v_2)^2=c_2^2=(\gamma-1)\rho_2^{\gamma-1}\label{circle},\\
(-u_1+u_2)\xi_J+v_2\eta_J+\frac{u_1^2}{2}-\frac{u_2^2+v_2^2}{2}+\rho_1^{\gamma-1}-\rho_2^{\gamma-1}=0\label{section3.2RH}.
\end{align}

From (\ref{circle})(\ref{section3.2RH}), we get, at $\sigma=0$,
\begin{align}
\frac{d\xi_J}{d \sigma}=(\frac{ Z}{ X}+1)\sqrt{\overline c_2^2-{Z}^2},\\
\frac{d\eta_J}{d \sigma}=(\frac{ Z}{ X}+1)(u_1+Z).
\end{align}

\subsection{Non-symmetric: Derivatives w.r.t. $\delta$}
\label{nonsymmetricderivativewrtdelta}
In this subsection we fix the angle between wedge and $\eta$-axis as $\sigma_0$, consider the derivatives of $(u_2^+,v_2^+,\rho_2^+)$ w.r.t. $\delta$ for $0\leq\delta\leq\sigma_0$(here consider $u_2^+,v_2^+,\rho_2^+$ as functions of $\delta$).

\begin{figure} \centering
    \includegraphics[height=5cm ]      
 {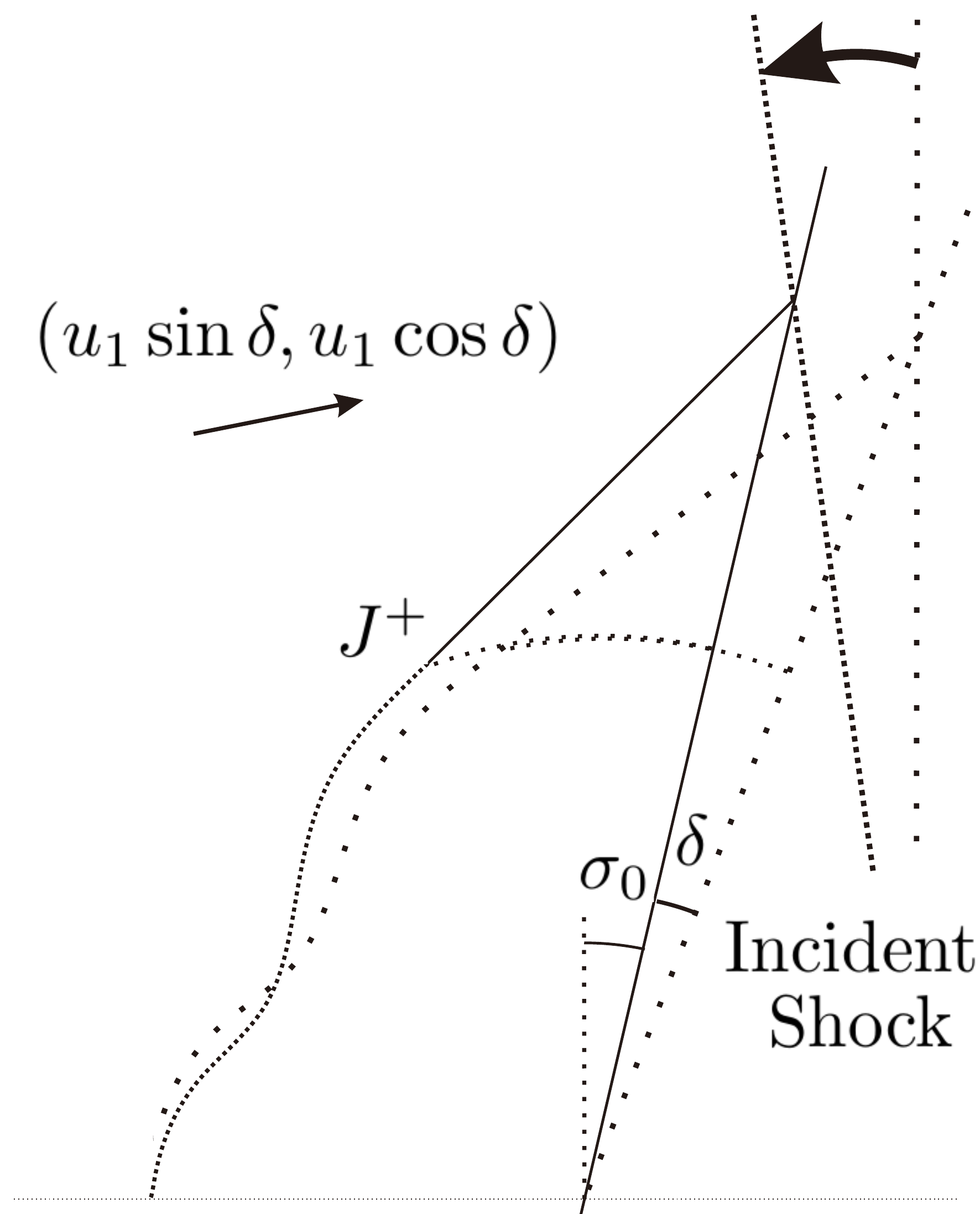}
\caption{Rotation from Symmetric Flow to Symmetric Wedge}
\label{fig:RotationfromSymmetricFlowtoSymmetricWedge}
\end{figure}
With result of last subsection, we get (as illustrated in Figure \ref{fig:RotationfromSymmetricFlowtoSymmetricWedge}):
\[(u_2^+,v_2^+)=(u_2(\sigma_0+\delta),v_2(\sigma_0+\delta))
\left(
\begin{array}{cc}
\cos\delta&\sin\delta\\
-\sin\delta&\cos\delta
\end{array}
\right)\]
\[\Rightarrow \frac{du_2^+}{d\delta}=O(\sigma_0),\ \ \frac{dv_2^+}{d\delta}=u_1({Z\over X}+1)+O(\sigma_0)\]
\[(\xi_J^+,\eta_J^+)=(\xi_J(\sigma_0+\delta),\eta_J(\sigma_0+\delta))
\left(
\begin{array}{cc}
\cos\delta&\sin\delta\\
-\sin\delta&\cos\delta
\end{array}
\right)\]
\[\Rightarrow \frac{d\xi_J^+}{d\delta}=\frac{Z}{X}\sqrt{\overline c_2^2-Z^2}+O(\sigma_0),\ \                     \frac{d\eta_J^+}{d\delta}=u_1(\frac{Z}{X}+1)+\frac{Z^2}{X}+O(\sigma_0)\]

Then we analyze motion of Sonic Circle.

Any point $\vecp$ on \Gammasonicplus\ takes the following form:
\[\vecp=c_2^+(\cos\alpha,\sin\alpha)+(u_2^+,v_2^+)\ ({\pi\over 2}-\sigma_0<\alpha<{\pi\over 2}-\sigma_0+\alpha_0,
\ \alpha_0={\pi\over 2}-{1\over 2}\arccos\frac{Z}{c_2})\]
and \[\frac{d\vecp}{d\delta}=(O(\sigma_0),u_1(\frac{Z}{X}+1)+O(\sigma_0)).\]
So  if we rotate upstream flow counterclockwise($\delta>0$), and $\sigma_0$ small enough s.t. \[\frac{O(\sigma_0)}{u_1(\frac{Z}{X}+1)+O(\sigma_0)}\leq\frac{\sqrt{\overline c_2^2-Z^2}}{2Z},\]
then every point on \Gammasonicplus\ moves up.
If we rotate upstream flow clockwise(corresponds to $\delta<0$), then sonic circle moves down.
\subsection{Comparison of \Statetwoplus\ and \Statetwominus}

Now with computation above we compare physical quantity and geometric structure of \Statetwoplus\ and $\statetwominus$.

\subsubsection{Comparison of Geometric Structures}
\begin{figure}
\centering
    \includegraphics[height=3.5cm ]      
 {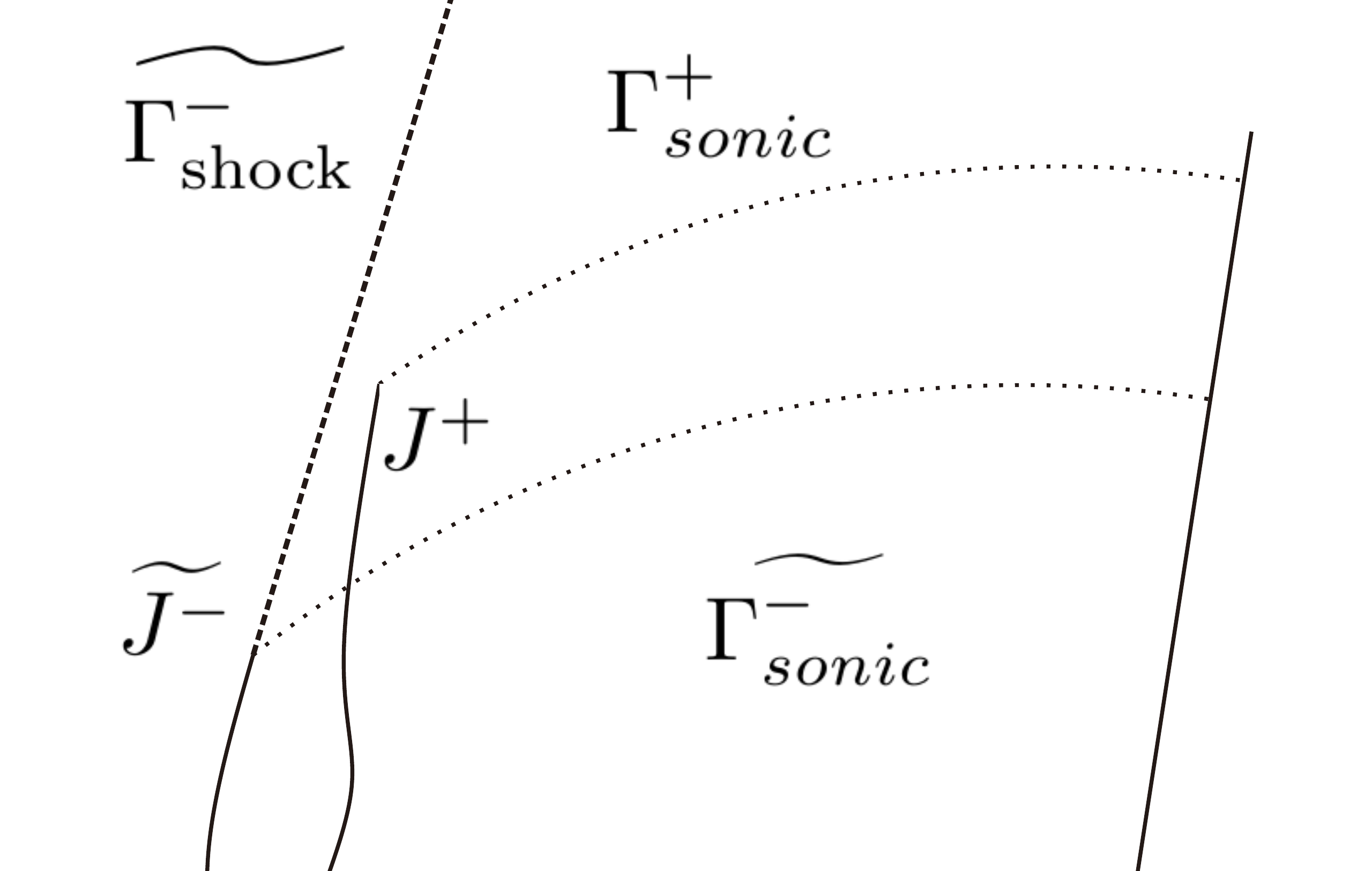}
\caption{Relative Positions of $\gammasonicplus,\ \gammashock,\ \widetilde{\gammasonicminus}$}
\label{fig:relativepositionofboundaryunderperturbation}
\end{figure}

We compare \Statetwoplus\ and State$(\RN{2}_-)$, by reflecting State$(\RN{2}_-)$ across symmetry axis of wedge.
And for any point $p$ (or set $A$), $\tilde{p}$ (and $\tilde{A}$) denotes the reflection of $p$ (and $A$) into symmetry axis of wedge.
So $\widetilde{J^-},\ \widetilde{\gammasonicminus},\ \widetilde{\gammashock}$ are the reflection of $J^-,\ \gammasonicminus,\ \gammashock$, and if we consider them as set valued function of $\delta$, then
\[\widetilde{J^-(\delta)}=J^+(-\delta),\ \widetilde{\gammasonicminus(\delta)}=\gammasonicplus(-\delta).\]
With computation result we have, we know
when $\sigma_0$ small enough and $\delta>0$, $\gammasonic^{+}$ should stay above $\widetilde{\gammasonicminus}$, and $ \widetilde{\Gamma_{\text{shock}}^{-}}$ should not intersect with \Gammasonicplus\ and $\gammashock$. So relative position of $\gammasonicplus,\ \gammashock,\ \widetilde{\gammasonicminus},\ \widetilde{\Gamma_{\text{shock}}^{-}}$ should be as illustrated in  Figure \ref{fig:relativepositionofboundaryunderperturbation}.
\subsubsection{Comparison of $\phi_2^+$ and $\phi_2^-$}\label{comparisonofphi2plusandphi2minus}
With (\ref{sonicspeed}), 
\[\phi_2^+=-{\rho_2^+}^{\gamma-1}+{\rho_0^{\gamma-1}}+u_2^+\xi+v^+_2\eta-\frac{{u_2^+}^2+{v_2^+}^2}{2},\]
\[\phi_2^-=-{\rho_2^-}^{\gamma-1}+{\rho_0^{\gamma-1}}+u_2^-\xi+v^-_2\eta-\frac{{u_2^-}^2+{v_2^-}^2}{2},\]
so on $\{\eta=0\}$, which is the symmetry axis of wedge,\[|\phi_2^+-\phi_2^-|\leq C_p\sigma_0\delta.\]
And $v_2^+-v_2^->\frac{1}{C_p}\delta$, so when $\sigma_0$ small enough
\begin{equation} \phi_2^+>\widetilde{\phi_2^-}+{\delta\over C_p}, \ \ \text{on  } \{\eta>{\sqrt{\overline c_2^2-Z^2}\over 2}\} ,\label{antisymmetricestimateofphi2}
\end{equation}
where, $\widetilde{\phi_2^-}$ is the reflection of $\phi_2^-$, 
\[\widetilde{\phi_2^-}(\xi,\eta)=\phi_2^-(\xi,-\eta).\]
\subsection{Stronger Comparison}\label{strongercomparisonbasedonderivative}
In section 5 we need a more precise estimate, we need to shift $J^{\pm},\ \gammasonic^{\pm}$ by $(\mp u_1\sin\delta\tan\sigma_0,
-u_1\sin\delta)$.
We denote\[\hat J^\pm=J^\pm-(\pm u_1\sin\delta\tan\sigma_0,u_1\sin\delta),\] 
\[ \hat\Gamma_{\text{sonic}}^\pm=\gammasonic^\pm-(\pm u_1\sin\delta\tan\sigma_0,u_1\sin\delta).\]

With previous results, in section \ref{nonsymmetricderivativewrtdelta}, we get
 \[\frac{d\hat J^+}{d\delta}=\frac{Z}{X}(\sqrt{\overline{c}_2^2-Z^2}+O(\sigma_0),u_1+Z+O(\sigma_0)).\]
And  points $\vecq$ on $\hgammasonicplus$ can be parametrized by
\[\vecq=c_2^+(\cos\alpha,\sin\alpha)+(u_2^+,v_2^+)-(u_1\sin\delta\tan\sigma_0,u_1\sin\delta),\]
for
\[ {\pi\over 2}-\sigma_0<\alpha<{\pi\over 2}-\sigma_0+\alpha_0,\ \ \ \alpha_0={\pi\over 2}-{1\over 2}\arccos\frac{Z}{\overline c_2},\]
then,
\[\frac{d\vecq}{d\delta}=(O(\sigma_0),u_1\frac{Z}{X}+O(\sigma_0)).\]
And this shift does not change tangential direction of shock, so after this shift, relative position of $\gammasonicplus,\ \gammashock,\ \widetilde{\gammasonicminus}$ remains, as illustrated in Figure \ref{fig:relativepositionofboundaryunderperturbation}.


\section{Symmetric Estimates}\label{symmetricestimate}

\begin{figure}
       \centering
    \includegraphics[height=9cm]{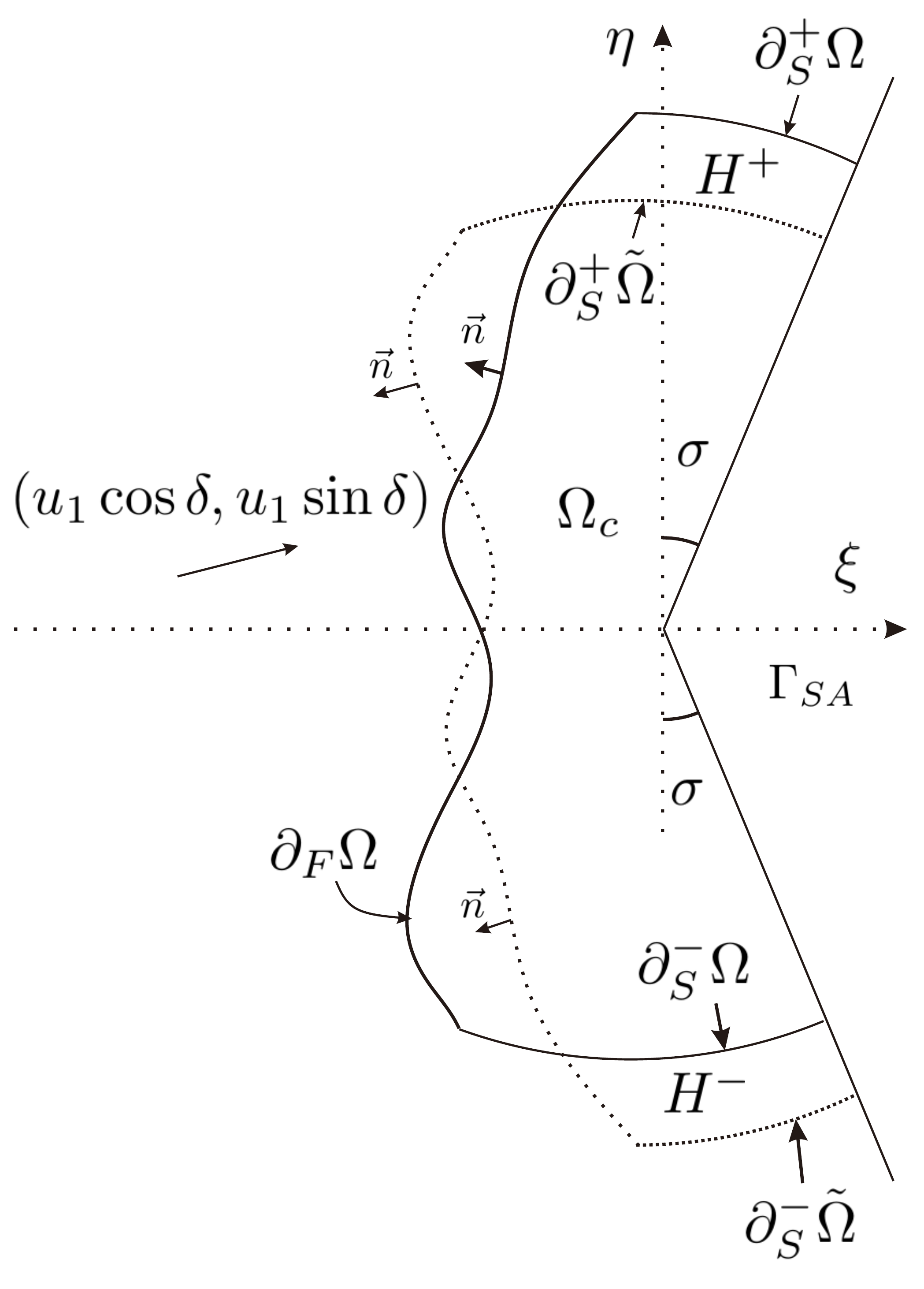}
\caption{Reflective Comparison}
\end{figure}

In this part we show that, given a regular solution to the potential flow equation, with velocity of upstream flow being $(u_1\cos \delta,u_1\sin\delta)$, the solution should be ``$\delta$-symmetric'' w.r.t. $\eta$. Before more precisely present the result of this section, we introduce the following notations:
\begin{notation}
In this section we define:

$\Gamma_{SA}$=Symmetry Axis of wedge.

For any function $f$, $\tilde f(\xi,\eta)=f(\xi,-\eta)$. And for any set $\mathcal{S}$, 
$\tilde {\mathcal{S}}$ is the reflection of $\mathcal{S}$ along $\Gamma_{SA}$,

  $\Omega_{c}=\Omega\cap \tilde\Omega$ (means ``common domain'').

And when $\sigma$ is small, based on estimate of section \ref{gradientestimate}, $\nabla\phi$ is small, so it's clear that we can divide $\partial \Omega$ into``free boundary part'',``sonic circle part" and``upper(or lower) sonic circle part". We denote them by $\partial_F \Omega$, $\partial_S \Omega$ and  $\partial_S^+\Omega(\text{or } \partial_S^-\Omega) $ respectively.
Similarly $\partial_F\Omega_c$ means the free boundary part of  the boundary of ``common domain".

 And the part of $\Omega\backslash\tilde\Omega$ above $\partial_S^+ \tilde\Omega$ is denoted by $H^+$, similarly  the part of  $\tilde\Omega\backslash\Omega$ below $\partial_S^- \Omega$ is denoted by $H^-$.

On $\partial \Omega$ and $\partial\tilde \Omega$,   $\vn$ is unit outer normal direction of $\Omega$ and $\tilde\Omega$.

And in $\Omega\cup\widetilde\Omega$ define:

\begin{minipage}{0.5\linewidth}
\[h=\left\{\begin{array}{cc}
           \phi-\tilde\phi,     &\text{ in } \Omega_c\\
            \phi_1-\tilde\phi, & \text{ in } (\tilde\Omega\backslash\Omega)\backslash H^-\\
              \phi-\widetilde{\phi_1}, &\text{ in } (\Omega\backslash\tilde\Omega)\backslash H^+\\
             \phi-\widetilde{\phi_2^-},&\text{ in } H^+\\
             \phi_2^--\tilde\phi, &   \text{ in } H^-
             \end{array}
     \right. 
\]
\end{minipage}
\begin{minipage}{0.5\linewidth}
\[h^+=\max\{h,0\},\]
\[ w=(h^+)^2.\]
\end{minipage}
\end{notation}

In this section we prove:

\begin{description}
\item [Section  \ref{integralsymmetricestimate}]\ \ \ $\int_{\Omega_c}|\phi-\tilde\phi|^3\leq C_p\cdot\delta^3             $
\item  [Section  \ref{linftyestimatenearcornerofwedge}]\ $ |\phi-\tilde\phi|\leq C_p\delta $,\ \ \ \  in $ B_{Z\over 8}(\MC)$
\item [Section \ref{gradientestimatenearcornerofwedge}]
					$\  |\phi_\eta|\leq C_p\delta r^{{\alpha\over 2}-1},\ \ \   \text{along $\Gamma_{SA}$ near corner of wedge }    $                          
\item [Section \ref{linftyestimatenearfreeboundary}]
					$\ |\phi-\tilde\phi|\leq C_p\delta \text{,\ \ \  in a neighborhood of $\gammashock\cap\Gamma_{SA}$}     $                                                                   
\item [Section \ref{gradientestimatenearfreeboundary}]
                                \ \   $|\phi_\eta|\leq C_p\delta,\ \ \ \  \text{along $\Gamma_{SA}$ near \Gammashock}        $
\end{description}

\subsection{Integral Symmetric Estimates}\label{integralsymmetricestimate}
Now we  consider the following integral:

\begin{align*}
I&=\int_{\Omega_c}\rho\nabla\cdot\varphi\nabla w-\tilde\rho\nabla\tilde\varphi\cdot  \nabla w-2\rho w+2\tilde\rho w\\
&=\int_{\Omega_c}\nabla\cdot[\rho\nabla\varphi w-\tilde\rho\nabla\tilde\varphi w]\ (\text{we used equation }\nabla\cdot[\rho\nabla\varphi]+2\rho=0 ).
\end{align*}
Since we know \Gammashock\ is the graph of a $C^1$ function of $\eta$ (\ref{gammashockisaconefunctionofeta}), $\partial\Omega_c $ should be Lipschitz, so $\vn$ is defined a.e.. With Green formula generalized to domain with Lipschitz boundary (Lemma 14.4 of \cite{LucCarneige}), we  can reduce $I$ to boundary integral,
\begin{align*}
I&=\int_{\partial\Omega_c}\rho D_{\vn}\varphi w-\tilde\rho D_{\vn}\tilde\varphi w\\
&=\int_{\partial_S \Omega_c}\rho D_{\vn}\varphi w-\tilde\rho D_{\vn}\tilde\varphi w
  +  \int_{\partial_F \Omega_c}\rho D_{\vn}\varphi w-\tilde\rho D_{\vn}\tilde\varphi w\triangleq I_S+I_F.
\end{align*}

 Here we don't need  to consider boundary integral on boundary of wedge, because on boundary of wedge  $D_{\vn}\varphi=0$. 


Then we estimate integral on $\partial_S\Omega_c$ and $\partial_F\Omega_c$ separately, first we estimate integral on sonic circle.
\begin{align*}
I_S= &		\int_{\partial_S\Omega_c}\rho D_{\vn}\varphi w-\trho\Dn\tvphi w\\
=&		\int_{\partial_S^+\Omega_c}\rho D_{\vn}\varphi w-\trho\Dn\tvphi w
   		+  \underbrace{\int_{\partial_S^-\Omega_c}\rho\Dn\varphi w-\trho\Dn\tvphi w}_{=0(w=0, (\ref{antisymmetricestimateofphi2}), (\ref{comparisonofphiandphitwoplus}) )}.
\end{align*}
 By symmetry,  (\ref{antisymmetricestimateofphi2}) of section \ref{comparisonofphi2plusandphi2minus} implies, on $\partial_S^-\Omega_c$ $\phi_2^-<\widetilde{\phi^+_2}$; and $\widetilde{\phi^+_2}\leq \tilde\phi$ (\ref{comparisonofphiandphitwoplus}), so $\phi=\phi_2^-<\widetilde{\phi^+_2}\leq \tilde\phi$, $w=0$. And similar argument shows $h>0$ in $H^+$, so 
\begin{align*}
I_S=&         \int_{\partial_S^+\Omega_c}\rho\Dn\varphi w-\widetilde{\rho_2^-}\Dn\widetilde{\varphi_2^- }w\ 
                                (\text{because  }\ \phi=\phi_2^-,\ \nabla\phi=\nabla\phi_2^- \text{ on }\gammasonicminus)\\
=&           \int_{H^+}-\nabla\cdot[(\rho\nabla\varphi-\widetilde{\rho_2^-}\nabla\widetilde{\varphi_2^- })h^2]\\
&	  	+ \int_{\partial_S^+\Omega}\underbrace{(\rho\Dn\varphi-\widetilde{\rho_2^-}\Dn\widetilde{\varphi_2^- })h^2}_{\leq C_p \delta^3(\rho=\rho_2^+,\nabla\varphi=\nabla\varphi_2^+)}
            +\underbrace{\int_{(\partial_F\Omega)\cap\overline{H^+}}(\rho\Dn\varphi-\widetilde{\rho_2^-}\Dn\widetilde{\varphi_2^-} )h^2}_{\leq C_p\delta^3\ (h\leq C_p\delta,\text{ and length of curve}\leq C_p\delta)}\\
\leq&		\int_{H^+}-2h[(\rho\nabla\varphi-\widetilde{\rho_2^-}\nabla\widetilde{\varphi_2^-} )\cdot\nabla h-(\rho-\widetilde{\rho_2^-})h]+C_p\delta^3\\
\leq&             \int_{H^+}-2h[(\rho\nabla\varphi-\rho_2^+\nabla\varphi_2^+ )\cdot\nabla g-(\rho-\rho_2^+)g]+C_p\delta^3\ (\text{here } g\triangleq\phi-\phi_2^+).
\end{align*}
Last inequality follows from 
\begin{itemize}
\item width of $H^+\leq C_p\delta,\ h\leq C_p\sigma$;
\item $|\phi_2^+-\widetilde{\phi_2^-}|\leq C_p\delta, |\nabla\phi_2^+-\nabla\widetilde{\phi_2^-}|\leq C_p\delta,|\rho_2^+-\widetilde{\rho_2^-}|\leq C_p\delta$.
\end{itemize}
Then as in  \cite{CFcomparison}, define\[\ft=\varphi_2^++tg,\ \rt=\left(\rho_0^{\gamma-1}-\ft-\frac{{|\nabla\ft|}^2}{2}\right)^{1\over\gamma-1}.\]
\begin{align*}
   &(\rho\nabla\varphi-\rho_2^+\nabla\varphi_2^+)\cdot\nabla g-(\rho-\rho_2^+)g\\
=&\int_0^1\frac{d}{dt}[\rt\nabla\ft]\cdot\nabla g-(\frac{d}{dt}\rt)g\\
=&\int_0^1\rt|\nabla g|^2+\frac{\rt^{2-\gamma}(-g-\nabla\ft\cdot\nabla g)}{\gamma-1}\nabla\ft\cdot\nabla g+\int_0^1\frac{\rt^{2-\gamma}}{\gamma-1}(g+\nabla \ft \cdot\nabla g)g\\
=&\int_0^1[\rt\delta_{ij}-\frac{\rt^{2-\gamma}}{\gamma-1}\ft_i\ft_j]g_i g_j+\int_0^1\frac{\rt^{2-\gamma}}{\gamma-1}g^2\geq0.
\end{align*}
In  inequality above, 
\begin{equation}\left(\rt\delta_{ij}-\frac{\rt^{2-\gamma}}{\gamma-1}\ft_i\ft_j\right)>0,\ \ (\text{as matrix}).
\label{ellipticityoflinearconnection}\nonumber
\end{equation}
This follows from the fact that, 
\begin{align*}
   &\left(\delta_{ij}-\frac{{r^{(0)}}^{1-\gamma}}{\gamma-1}{f^{(0)}}_i{f^{(0)}}_j\right)
=\left(\delta_{ij}-\frac{\varphi^+_{2,i}\varphi^+_{2,j}}{(\gamma-1){\rho_2^+}^{\gamma-1}}\right)>0,\\
   &\left(\delta_{ij}-\frac{{r^{(1)}}^{1-\gamma}}{\gamma-1}{f^{(1)}}_i{f^{(1)}}_j\right)
=\left(\delta_{ij}-\frac{\varphi_{i}\varphi_{j}}{(\gamma-1){\rho}^{\gamma-1}}\right)>0,
\end{align*} because $H^+$ is the subsonic region for both $\varphi$ and $\varphi_2^+$, and 
\begin{align*}
-(\gamma-1)\rt^{\gamma-1}+|\nabla\ft|^2 \text{ is a convex function of } t.
\end{align*}
 More detail and explanation of this method is contained in \cite{CFcomparison}.
So \[\int_{\partial_S\Omega_c}\rho D_{\vn}\varphi w-\trho\Dn\tvphi w\leq C_p\delta^3.\]
Then we estimate integral on free boundary
\begin{align}
I_F=  &\int_{\partial_F\Omega_c}(\rho\nabla\varphi w-\tilde\rho\nabla\tilde\varphi w)\cdot\vn\nonumber\\
=&\int_{(\partial_F\Omega)\cap \tilde\Omega}( \rho_1\nabla\varphi_1 w-\tilde\rho\nabla\tilde\varphi w)\cdot\vn
   +\int_{(\partial_F\tilde \Omega)\cap \Omega}( \rho\nabla\varphi w-\widetilderhoone\nabla\widetilde{\varphi_1} w)\cdot\vn\nonumber\\
&+\int_{(\partial_F\Omega)\cap {(\partial_F\tilde\Omega)}}(\rho\nabla\varphi w-\tilde\rho\nabla\tilde\varphi w)\cdot\vn\label{symmetricintegralonshockmeetsshock}.
\end{align}
 In (\ref{symmetricintegralonshockmeetsshock}), the third term can be a little complex, since $(\partial_F\Omega)\cap {(\partial_F\tilde\Omega)}$ may be a set of isolated points, but with positive 1-dim Hausdorff measure. And on $(\partial_F\Omega)\cap {(\partial_F\tilde\Omega)}$, $\vn$ is not trivially related to normal of $\Omega$ and $\tilde\Omega$. 

Since we know \Gammashock\ is the graph of a $C^1$ function of $\eta$ (\ref{gammashockisaconefunctionofeta}),   so let \Gammashock\ be the graph of $f_1$, and $\widetilde\gammashock=\partial_F\tilde\Omega$ be the graph of $f_2$, with $f_1$ and $f_2$ both being $C^1$ functions of $\eta$. And let $Proj$ be the projection from ${\mathbb{R}}^2$ to ${\mathbb{R}}=\{\eta\}$. So on $\mathcal{F}\triangleq Proj(\partial_F\Omega_c)\subset \mathbb{R}$, $f_1$ and $f_2$ are both defined, and actually, in the following, we only need to consider $f_1$ and $f_2$ as functions defined  on $\mathcal{F}$
, since $\gammashock\cap\widetilde{\gammashock}\subset \partial_F\Omega_c$.

So $Proj(\gammashock\cap\widetilde{\gammashock})=\{f_1=f_2\}$ is a measurable set in $\mathbb{R}$. And since  we have an estimate of $f_1^\prime$ and $f_2^\prime$ ($|u|\leq C_p\sigma$ (\ref{estimateofu}), $|v|\leq C_p\sigma$ (\ref{estimateofv})), the 1-dim Hausdorff measure on $\gammashock\cap\widetilde{\gammashock}$ is equivalent to 1-dim Lebesgue measure on $\{f_1=f_2\}$. 

On $\{f_1=f_2\}$, except a set of measure zero, any point is a Lebesgue point. For a Lebesgue point of $\{f_1=f_2\}$, say $a$, we can find a sequence $\{a_i\}_{i=0}^\infty$, s.t. 
\begin{equation}
\lim_{i\rightarrow \infty}a_i=a \ (a_i\neq a),\ \ \text{and }\ \ f_1(a_i)=f_2(a_i).
\end{equation}
Since we assumed $f_1$ and $f_2$ are $C^1$ functions of $\eta$,
\begin{equation}
f_1^\prime(a)=\lim_{i\rightarrow \infty}\frac{f_1(a_i)-f_1(a)}{a_i-a}=\lim_{i\rightarrow \infty}\frac{f_2(a_i)-f_2(a)}{a_i-a}
=f_2^\prime(a),
\end{equation}
$f_1^\prime=f_2^\prime$ a.e. on   $\{f_1=f_2\}$. So, on $\gammashock\cap\widetilde{\gammashock}$, \Gammashock\ and $\widetilde{\gammashock}$ have same normal directions a.e.,  w.r.t. the 1-dim Hausdorff measure on $\gammashock\cap\widetilde{\gammashock}$. So, 
\begin{equation}
(\rho\nabla\varphi-\tilde\rho\nabla\tilde\varphi)\cdot{\vn}=(\rho_1\nabla\varphi_1-\widetilderhoone\nabla\widetilde{\varphi_1})\cdot{\vn}\leq C_p\delta \ \  \text{a.e. on $\gammashock\cap\widetilde{\gammashock}$}.\label{boundaryestimateofvectoronshockmeetsshock}
\end{equation}

And on $\gammashock\cap\widetilde{\gammashock}$, $w\leq |h|^2=|\widetilde{\varphi_1}-\varphi_1|^2\leq C_p\delta^2$. Combining this with (\ref{boundaryestimateofvectoronshockmeetsshock}) gives
\[\int_{(\partial_F\Omega)\cap {(\partial_F\tilde\Omega)}}(\rho\nabla\varphi w-\tilde\rho\nabla\tilde\varphi w)\cdot\vn\leq C_p \delta^3.\]
By plugging this into (\ref{symmetricintegralonshockmeetsshock}), we can reduce $I_F$ to
\[I_F\leq \int_{(\partial_F\Omega)\cap \tilde\Omega}( \rho_1\nabla\varphi_1 w-\tilde\rho\nabla\tilde\varphi w)\cdot\vn
   +\int_{(\partial_F\tilde \Omega)\cap \Omega}( \rho\nabla\varphi w-\widetilderhoone\nabla\widetilde{\varphi_1} w)\cdot\vn+C_p\delta^3.\]
Then, we transfer first two terms into integrals out of $\overline\Omega_c$.
{\allowdisplaybreaks
\begin{align*}
I_F=&\int_{(\tilde\Omega\backslash\Omega)\backslash H^-}\nabla\cdot[-\rho_1 \nabla\varphi_1 w+\tilde\rho\nabla\tilde\varphi w ]
  +\int_{(\Omega\backslash\tilde\Omega)\backslash H^+}\nabla\cdot[-\rho\nabla\varphi w+\widetilderhoone\nabla\widetilde{\varphi_1} w]\\
 &+\int_{(\partial_F\tilde\Omega)\setminus(\Omega\cup\overline{H^-})}(\rho_1\nabla\varphi_1-\tilde\rho\nabla\tilde\varphi)\cdot\vn w
   +\int_{(\partial_F \Omega)\setminus(\tilde\Omega\cup\overline{H^+})}(\rho\nabla\varphi -\widetilderhoone\nabla\widetilde{\varphi_1})\cdot\vn w\\
  &+\underbrace{\int_{(\partial_S^+\tilde\Omega)\setminus\Omega}(\rho_1\nabla\varphi_1-\tilde\rho\nabla\tilde\varphi)\cdot\vn w
   +\int_{(\partial_S^-\Omega)\setminus\tilde\Omega}(\rho\nabla\varphi-\widetilderhoone\nabla\widetilde{\varphi_1})\cdot\vn w}_
{\leq C_p\delta^3,\text{ because } w\leq C_p \delta^2, \text{ and length of integral curve} \leq C_p\delta.}
    +C_p\delta^3\\
\leq&\int_{(\tilde\Omega\backslash\Omega)\backslash H^-}\nabla\cdot[-\rho_1 \nabla\varphi_1 w+\tilde\rho\nabla\tilde\varphi w ]
  +\int_{(\Omega\backslash\tilde\Omega)\backslash H^+}\nabla\cdot[-\rho\nabla\varphi w+\widetilderhoone\nabla\widetilde{\varphi_1} w]\\
&+\int_{(\partial_F\tilde\Omega)\setminus(\Omega\cup\overline{H^-})}\underbrace{(\widetilderhoone\nabla\widetilde{\varphi_1}-\tilde\rho\nabla\tilde\varphi)\cdot\vn}_{=0\ (RH\text{ condition})} w
   +\int_{(\partial_F \Omega)\setminus(\tilde\Omega\cup\overline{H^+})}\underbrace{(\rho\nabla\varphi -\rho_1\nabla\varphi_1)\cdot\vn }_{=0\ (RH\text{ condition})}w\\
&+\underbrace{\int_{(\partial_F\tilde\Omega)\setminus(\Omega\cup\overline{H^-})}(\rho_1\nabla\varphi_1-\widetilderhoone\nabla\widetilde{\varphi_1})\cdot \vn w
  +\int_{(\partial_F\Omega)\setminus(\tilde\Omega\cup\overline{H^+})}(\rho_1\nabla\varphi_1-\widetilderhoone\nabla\widetilde{\varphi_1})\cdot \vn w}
                                           _{ \leq C_p\delta^3,\ \left(\text{because } w\leq C_p \delta^2,\ \nabla(\widetilde{\varphi_1}-\varphi_1)\leq 2 u_1\delta,\ \rho_1=\widetilderhoone=\text{constant} \right)}\\
&+C_p \delta^3\\
\leq& C_p\delta^3
+\int_{(\tilde\Omega\backslash\Omega)\backslash H^-}2h^+[(-\rho_1\nabla\varphi_1+\tilde\rho\nabla\tilde\varphi)\cdot\nabla h+(\rho_1- \tilde\rho)h^+]\\
&+\int_{(\Omega\backslash\tilde\Omega)\backslash H^+}2h^+[(-\rho\nabla\varphi+\widetilderhoone\nabla\widetilde{\varphi_1})\cdot\nabla h+(\rho- \widetilde{\rho_1})h^+].
\end{align*}
}
In the following we discuss integrals in $(\tilde\Omega\backslash\Omega)\backslash H^-$ and $(\Omega\backslash\tilde\Omega)\backslash H^+$ separately.

For convenience, we define:
\begin{equation}\label{eq:fourvariablerhfunction}
RH(r_1,f_1,r_2,f_2)=(r_1\nabla f_1-r_2\nabla f_2)\cdot(\nabla f_1-\nabla f_2).
\end{equation}
With this notation, on $\gammashock$, $RH(\rho,\varphi,\rho_1,\varphi_1)=0$ is RH condition; and in $\Omega$, $RH(\rho,\varphi,\rho_1,\varphi_1)$ is the function RH defined and discussed in section \ref{gradientestimate} and \ref{estimateofRHfunction}.

1)In  $(\tilde\Omega\backslash\Omega)\backslash H^-$, by estimate of section \ref{estimateofRHfunction}
\[
 (-\rho_1\nabla\varphi_1+\tilde\rho\nabla\tilde\varphi)\cdot\nabla h=-RH(\rho_1,\varphi_1, \tilde\rho,\tilde\varphi)\leq C_p\delta-RH(\widetilderhoone,\widetilde{\varphi_1}, \tilde\rho,\tilde\varphi)\leq C_p\delta-(\tilde\rho-\rho_1)(\widetilde{\phi_1}-\tilde\phi)\]
 so $-RH(\rho_1,\varphi_1,\tilde\rho,\tilde\varphi)>0$ only in a $C_p\delta$ neighborhood of $\partial_F\tilde\Omega$, and in this neighborhood $-RH(\rho_1,\varphi_1,\tilde\rho,\tilde\varphi)\leq C_p \delta$
then
\[
\int_{ (\tilde\Omega\backslash\Omega)\backslash H^-}2h^+[(-\rho_1\nabla\varphi_1+\tilde\rho\nabla\tilde\varphi)\cdot \nabla h]\leq C_p\delta^3.
\]
And note that $\tilde\rho>\rho_1, \ \text{in }{\widetilde\Omega}$, so
\[\int_{(\tilde\Omega\backslash\Omega)\backslash H^-}(\rho_1-\tilde\rho)(h^+)^2\leq0.\]

2)In $(\Omega\backslash\tilde\Omega)\backslash H^+$, $h=\phi-\widetilde{\phi_1}$, so $h_\xi\leq{-u_1 \over 2}$ (providing  $\sigma$ small enough, s.t. $|\nabla\phi|\leq{u_1\over 2}$) and on $\partial_F(\Omega\cup\tilde\Omega)$ $|h|\leq C_p\delta$, so only in a $C_p\delta$ neighborhood of $\partial_F(\Omega\cup\tilde\Omega)$ $h^+\neq0$, and in this neighborhood $h^+\leq C_p\delta$, so we have:
 \[
\int_{(\Omega\backslash\tilde\Omega)\backslash H^+}2(\rho-\widetilderhoone){(h^+)}^2\leq C_p\delta^3
\]
and in $\{h\neq 0\}\cap(\Omega\backslash\tilde\Omega)\backslash H^+$, which is contained in a strip with width $\leq C_p\delta$:
\begin{align*}
  &(-\rho\nabla\varphi+\widetilderhoone\nabla\widetilde{\varphi_1})\cdot(\nabla\phi-\nabla\widetilde{\phi_1})\\
=&\underbrace{(-\rho\nabla\varphi+\rho_1\nabla\varphi_1)\cdot(\nabla\phi-\nabla\phi_1)}_{=-RH\leq0 \text(\ref{lowerboundofRH} )}+(-\rho\nabla\varphi+\widetilderhoone\nabla\widetilde{\varphi_1})\cdot(-\nabla\widetilde{\phi_1}+\nabla\phi_1)\\
&\ \ \ \ \  \ \ +(-\rho_1\nabla\varphi_1+\widetilderhoone\nabla\widetilde{\varphi_1})\cdot(\nabla\phi-\nabla\phi_1)\\
\leq&C_p\delta
\end{align*}
  so \[\int_{(\Omega\backslash\tilde\Omega)\backslash H^+}2h^+[(-\rho\nabla\varphi+\widetilderhoone\nabla\widetilde{\varphi_1})\cdot\nabla h+(\rho-\widetilderhoone)h^+]\leq C_p\delta^3.\]

Put estimates above together, we get
 $ I\leq C_p \delta^3$.

Then we define:
\begin{equation}\label{connection}
  \varphi^{(t)}=\tilde\varphi+t(\varphi-\tilde\varphi),\ \  \rho^{(t)}=\left(\rho_0^{\gamma-1}-\varphit-\frac{|\nabla \varphit|^2}{2}\right)^{ 1 \over \gamma -1}.
\end{equation}Again with computation in  \cite{CFcomparison},
\begin{align*}
I&\geq\int_{\Omega_c}\int^1_0\frac{d}{dt}[\rhot\nabla \varphit\nabla w-2\rhot w]\\
                        &=\int_{\Omega_c}2(\int^1_0\frac{{\rhot}^{2-\gamma}}{\gamma-1})(h^+)^3+2(\int^1_0 a^{(t)}_{ij})h_i h_j h^+\ \ \ 
													\left(a^{(t)}_{ij} =\rhot\delta_{ij}-\frac{{\varphit}_i{\varphit}_j{\rho^{(t)}}^{2-\gamma}}{\gamma-1}\right)\\
                        &\geq\frac{{\overline{\rho_2}}^{2-\gamma}}{\gamma-1}\int_{\Omega_c}(h^+)^3\ 
                              \left(\text{if $\sigma$ small enough s.t. $\rho^{2-\gamma}>\frac{\overline\rho_2^{2-\gamma}}{2}$}\right).
\end{align*}
So analysis in this section shows
\[\int_{\Omega_c}(h^+)^3\leq C_p I\leq C_p\delta^3.\]
Then, because $\phi-\tilde\phi$ is an odd function of $\eta$, we know :
\begin{equation}
\int_{\Omega_c}|\phi-\tilde\phi|^3\leq C_p\delta^3\label{integralsymmetricestimateinequality}.
\end{equation}
\subsection{Symmetric Estimates near the Corner of the Wedge}
In this section, from the integral estimate of last section we get:
\begin{itemize}

\item near the corner of the wedge, $|\phi-\tilde\phi|\leq C_p\delta$, (Section \ref{linftyestimatenearcornerofwedge});

\item on $\Gamma_{SA}$ near corner of wedge, $ |\phi_\eta|\leq C_p\delta d^{{\alpha\over 2}-1}_c$, (Section \ref{gradientestimatenearcornerofwedge})  .
\end{itemize}
In above $d_\mathcal{C}$ means distance to corner of wedge.
\subsubsection{$L^\infty$ Estimate near Corner of Wedge}\label{linftyestimatenearcornerofwedge}
Take $g\in C_0^{\infty}(B_R(\mathcal{C}))$, $g=1$ in $B_r(\mathcal{C})$ $(r<R\leq { Z \over 4})$ and $|\nabla g|\leq \frac{2}{R-r}$. In this section $\mathcal{B}$ denotes $B_R(\mathcal{C})\backslash W$, and define $k=(\phi-\tilde\phi)^+$.

Consider:($s\geq 2$)
\begin{align}
I=&\int_{\mathcal{B}}\rho\nabla\varphi\cdot\nabla(k^sg^2)-2\rho k^s g^2-\tilde\rho\nabla\tilde\varphi\cdot\nabla(k^s g^2)+2\tilde\rho k^sg^2\\
 =&\int_{\mathcal{B}}\nabla\cdot[\rho\nabla\varphi k^s g^2-\tilde\rho\nabla\tilde\varphi k^s g^2]\\
 =&\int_{\partial \mathcal{B}}\rho\varphi_{\vn}k^s g^2-\tilde\rho\tilde\varphi_{\vn}k^s g^2=0.
\end{align}
On the other hand
\begin{align*}
I&=\int_\mathcal{B}\int^1_0{d \over dt}[\rhot\nabla\varphit\cdot\nabla(k^s g^2)-2\rhot k^s g^2]\\
 &=\int_\mathcal{B}\int^1_0\frac{\rhot^{2-\gamma}}{\gamma-1}(-k-\nabla\varphit\cdot\nabla k)\nabla\varphit\cdot\nabla(k^s g^2)+\rhot\nabla k \cdot\nabla(k^s g^2)\\
&\ \ \ +\frac{2{\rhot}^{2-\gamma}}{\gamma-1}k^{s+1}g^2+\frac{2{\rhot}^{2-\gamma}}{\gamma -1}\nabla\varphit\cdot\nabla k k^s g^2\\
 &=\int_\mathcal{B}\mu_1 k^{s+1} g^2+s k^{s-1}A_{ij}k_ik_j g^2+2\mu_2k^s\nabla k\cdot\nabla g g+(2-s)\vec B\cdot \nabla k k^s g^2\\
&\ \ \ -2\vec B\cdot \nabla g k^{s+1} g-2C_{ij}k_ig_j g k^s,
\end{align*}
where, in above, $\rho^{(t)}$ and $\varphi^{(t)}$ are defined as in (\ref{connection}), and
\[
\begin{array}{cc}
\mu_1=\int^1_0\frac{2{\rhot}^{2-\gamma}}{\gamma-1}  &A_{ij}=\int^1_0\rhot\left(\delta_{ij}-\frac{{\rhot}^{1-\gamma}}{\gamma-1}\varphit_i\varphit_j\right)\\
\mu_2=\int^1_0\rhot                                                             &\vec B=\int_0^1\frac{{\rhot}^{2-\gamma}}{\gamma-1}\nabla\varphit  \ \ \ \ C_{ij}=\int_0^1\frac{{\rhot}^{2-\gamma}}{\gamma-1}\varphit_i\varphit_j
\end{array}.
\]

When $\sigma$ small enough, we have in $\mathcal{B}$
\[\mu_1\geq0, \ \ \ 0<\mu_2\leq 2\overline \rho_2, \ \ |\vec B|\leq\frac{2\overline\rho_2}{\overline c_2}, \ \  0\leq(C_{ij})\leq2 \overline\rho_2,\ \ 
\frac{\overline\rho_2}{C_p}\leq(A_{ij})\leq C_p\overline\rho_2.\].

So:
\begin{align*}
\int_\mathcal{B} sk^{s-1}\frac{\overline\rho_2}{C_p}|\nabla k|^2g^2 
\leq \int_\mathcal{B}\frac{4\overline\rho_2}{\overline c_2}k^{s+1}|\nabla g|g
+(s-2)\frac{2 \overline \rho_2}{\overline c_2}|\nabla k|k^s g^2+8\overline\rho_2|\nabla k||\nabla g| k^sg\\
\int_\mathcal{B} sk^{s-1}|\nabla k|^2g^2\leq     C_p\int_\mathcal{B} k^{s+1}\left(\frac{32}{\overline c_2}|\nabla g|g +\frac{64s}{\overline c_2^2}g^2+256|\nabla g|^2\right)\\
\|k^{s+1 \over 2} g\|^2_{4;\mathcal{B}}\leq C_p s^2\int_\mathcal{B} k^{s+1}\left(|\nabla g|^2+1\right).
\end{align*}
This implies, for $\frac{ Z}{8}\leq r< R\leq \frac{ Z}{4},$
\[\|k^{s+1 \over 2}\|_{4;B_r}\leq \frac{C_p s}{R-r}\|k^{s+1 \over 2}\|_{2;B_R}.\]
Then with Moser Iteration we get:
\[
|k|_{\infty;B_{ Z \over 8}\backslash W}\leq C_p\left(\int_{B_\frac{Z}{4}\backslash W} k^3\right)^{1 \over 3}\leq C_p\delta.
\]
So 
\begin{equation}
|\phi-\tilde\phi|\leq C_p\delta, \text{ in } B_{Z\over 8}\setminus W.
\end{equation}
\subsubsection{Gradient Estimate near Corner of Wedge}\label{gradientestimatenearcornerofwedge}
Here, consider the difference of equations of $\phi$ and $\tilde\phi$:
\begin{align}
\left[c^2-(\phi_\xi-\xi)^2\right]\phi_{\xi\xi}-2(\phi_\xi-\xi)(\phi_\eta-\eta)\phi_{\xi\eta}+\left[c^2-(\phi_\eta-\eta)^2\right]\phi_{\eta\eta}=0\\
\left[\tilde c^2-(\tilde\phi_\xi-\xi)^2\right]\tilde\phi_{\xi\xi}-2(\tilde\phi_\xi-\xi)(\tilde\phi_\eta-\eta)\tilde\phi_{\xi\eta}+\left[\tilde c^2-(\tilde\phi_\eta-\eta)^2\right]\tilde\phi_{\eta\eta}=0,
\end{align}
we get:
\begin{equation}\label{eq:equationofhbytakingsympledifference}
a_{ij}h_{ij}+b_1h_\xi+b_2h_\eta+b_3 h=0,
\end{equation}
where,
\begin{align*}
(a_{ij})&=\left[\begin{array}{cc}
                     c^2-(\phi_\xi-\xi)^2               &                   -(\phi_\xi-\xi)(\phi_\eta-\eta)\\
                      -(\phi_\xi-\xi)(\phi_\eta-\eta)&                   c^2-(\phi_\eta-\eta)^2
                      \end{array}
              \right]\\
b_1&=-{\gamma+1\over 2}(\tilde\varphi_\xi+\varphi_\xi)\tilde\phi_{\xi\xi}-2\tilde\varphi_\eta\tilde\phi_{\xi\eta}-{\gamma-1\over 2}(\tilde\varphi_\xi+\varphi_\xi)\tilde\phi_{\eta\eta}\\
b_2&=-{\gamma+1\over 2}(\tilde\varphi_\eta+\varphi_\eta)\tilde\phi_{\eta\eta}-2\varphi_\xi\tilde\phi_{\xi\eta}-{\gamma-1\over 2}(\tilde\varphi_\eta+\varphi_\eta)\tilde\phi_{\xi\xi}\\
b_3&=-(\gamma-1)\triangle\tilde\phi.
\end{align*}
By apriori estimate in section \ref{holdergradientestimateatcornerofwedge}, there exists $\alpha$ does not depend on $\sigma$ s.t. $|\vec b|\leq C_p r^{\alpha -1}$, and $|a_{ij}-\delta_{ij}|\leq C_pr^\alpha$, then by Lemma \ref{Lemma A2} we have in a small neighborhood of corner, along $\Gamma_{SA}$ :
\begin{align}
|h_\eta|\leq C_p r^{{\alpha \over 2}-1}\delta \Rightarrow |\phi_\eta|\leq C_p r^{{\alpha\over 2}-1}\delta.
\end{align}
\subsection{Symmetric Estimates near the Free Boundary}
In this part from integral estimate:
\[\int_{\Omega_c}|\phi-\tilde\phi|^3\leq C_p\delta^3,
\] 
we will get:
\begin{itemize}
\item near  $\gammashock\cap\Gamma_{SA}$, \ \  $|\phi-\tilde\phi|\leq C_p\delta \ \     $ (Section \ref{linftyestimatenearfreeboundary});
\item  near $\gammashock$ on $\Gamma_{SA}$,  \ \ $|\phi_\eta|\leq C_p\delta    \ \ \ \ $ (Section \ref{gradientestimatenearfreeboundary}).
\end{itemize}

\subsubsection{$L^\infty$ Estimate near Free Boundary}\label{linftyestimatenearfreeboundary}

In this subsection denote
 $P=\gammashock\cap\Gamma_{SA} $ and $Q=Q_{\min\{{Z\over 2},{Y\over 2}\}}(P)$, as illustrated in Figure \ref{fig:Linftyestimatenearfreeboundary}.

We want to estimate $|h|$ in $\Omega_c\cap Q$, and we will use equation of $h$:
\begin{align*}
a_{ij}h_{ij}+b_1h_\xi+b_2h_\eta+b_3h=0, \text{ in } \Omega_c,
\end{align*}
where, $a_{ij},b_i's $ have same expressions as those in Section \ref{gradientestimatenearcornerofwedge}, but here, when it's away from corner of wedge, we have better regularity, $\phi\in C^2(Q\cap \Omega_c)$ ( Proposition \ref{prop:C2chiestimateawayfromsoniccircleandcornerofwedge}, Section \ref{C2chiestimateawayfromsoniccircleandcornerofwedge}), so:
\[
\lambda\leq(a_{ij})\leq\Lambda, \ \ |b_i|\leq C_p.
\]
On $\partial_F(\Omega\cup\tilde\Omega),\ \ |h|=|\phi_1-\widetilde{\phi_1}|\leq2u_1\delta$, and in $(\Omega\setminus\tilde\Omega)\cap Q$, $\partial_\xi h\leq {-u_1 \over 2}<0$(if $\sigma$ small enough), so
\begin{align}h\leq C_p\delta,\text{ on }(\partial_F\tilde\Omega)\cap\overline\Omega.\label{dirichletboundofhonfreeboundarybyderivativepositive}
\end{align}
On $(\partial_F\Omega)\backslash\tilde\Omega, RH(\rho_1,\varphi_1,\rho,\varphi)=0,$ which is the RH condition. A reflection of this gives
 		      \[    \text{  on\ } (\partial_F\tilde\Omega)\backslash\Omega,  \ \ \      RH(\rho_1,\widetilde{ \varphi_1},\tilde\rho,\tilde\varphi)=0,\]
 			so with (\ref{lowerboundofRH}), we get
\[\text{on }(\partial_F\Omega)\cap\tilde\Omega ,          \ \ \     RH(\rho_1,\widetilde{ \varphi_1},\tilde\rho,\tilde\varphi)\geq0.\]
                       Then we change $\varphi_1$ to $\widetilde{\varphi_1}$, and get
\begin{equation}  \label{RHboundofhonfreeboundary}
RH(\rho_1, \varphi_1,\tilde\rho,\tilde\varphi)>-C_p\delta.
\end{equation}

Taking difference of (\ref{RHboundofhonfreeboundary}) and RH condition gives:
\begin{align}
C_p\delta\geq& RH(\rho_1,\varphi_1,\rho,\varphi)-RH(\rho_1,\varphi_1,\tilde\rho,\tilde\varphi)\nonumber\\
                          =&\int_0^1{d\over dt}RH(\rho_1,\varphi_1,\rho^{(t)}, \varphi^{(t)})\nonumber\\
                         \triangleq&\lambda h+D_\vbeta h,\label{Robinconditonofhonfreeboundary}
\end{align}
where,
\begin{align}
\lambda\geq\frac{\overline \rho_2^{2-\gamma} Z u_1}{\gamma-1}-C_p(\sigma+\delta),
\end{align}
and $\vbeta=(\beta_1,\beta_2)$, with
\begin{align}
\beta_1\leq{1\over 2}\overline\rho_2u_1(\frac{ Z^2}{\bar c_2^2}-1)-C_p(\sigma+\delta)\leq \frac{-1}{C_p}\\
|\beta_2|\leq C_p .
\end{align}
Then with Lemma \ref{lem:BoundaryKrylovSafonov}, there exists $r_1, C_p$, s.t. $h\leq C_p \delta$ in $B_{r_1}(P)$, which implies,
\begin{equation}|h|\leq C_p \delta\ \text{,\ in }B_{r_1}(P),
\end{equation}
since $h$ is symmetric function of $\eta$.
\begin{figure}[h] 
\begin{minipage}{0.5\linewidth}
       \centering
    \includegraphics[height=7cm]{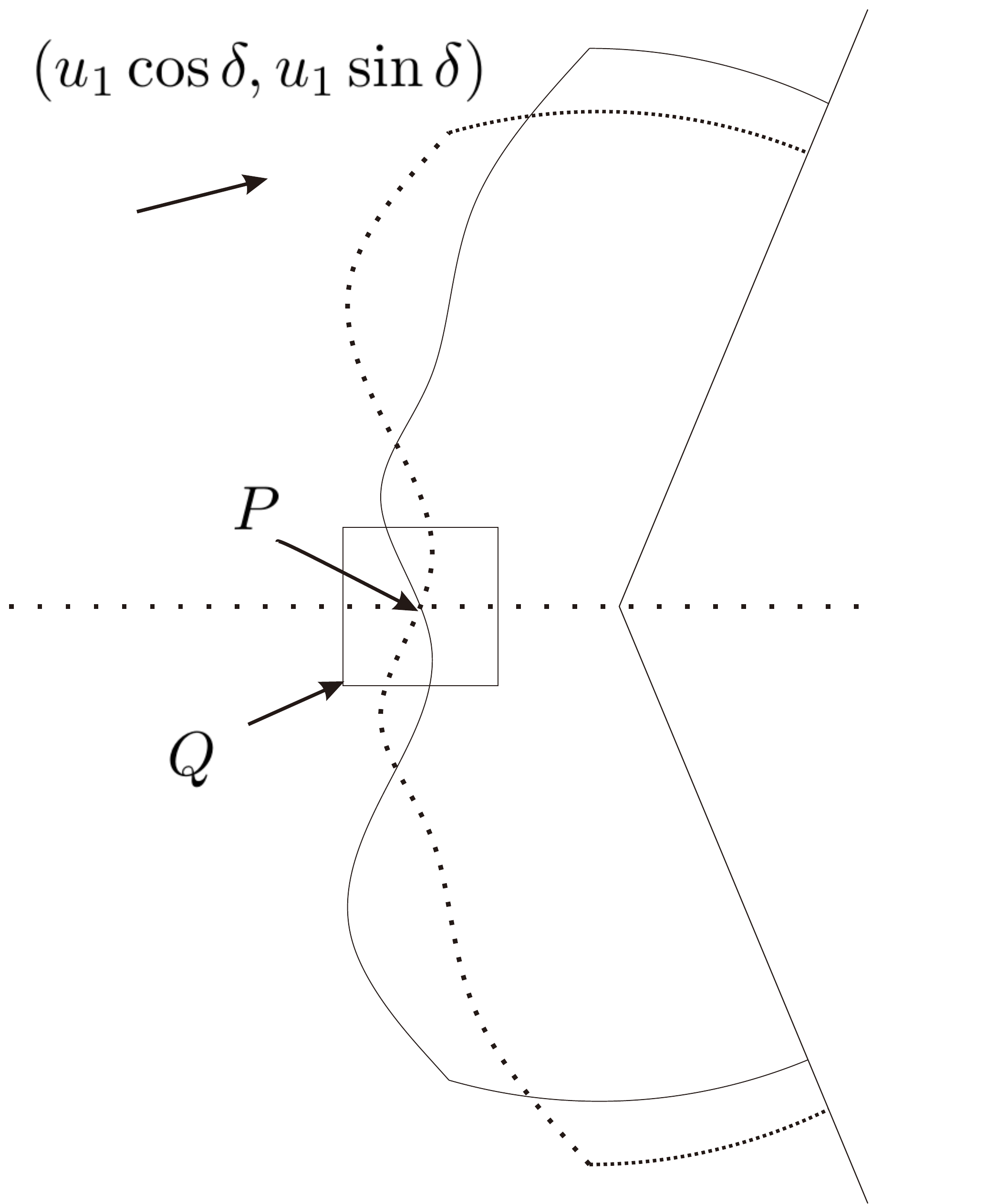}
\caption{$L^\infty$ Estimate near Free Boundary}
\label{fig:Linftyestimatenearfreeboundary}
\end{minipage}
\begin{minipage}{0.5\linewidth} 
       \centering
    \includegraphics[height=7cm]{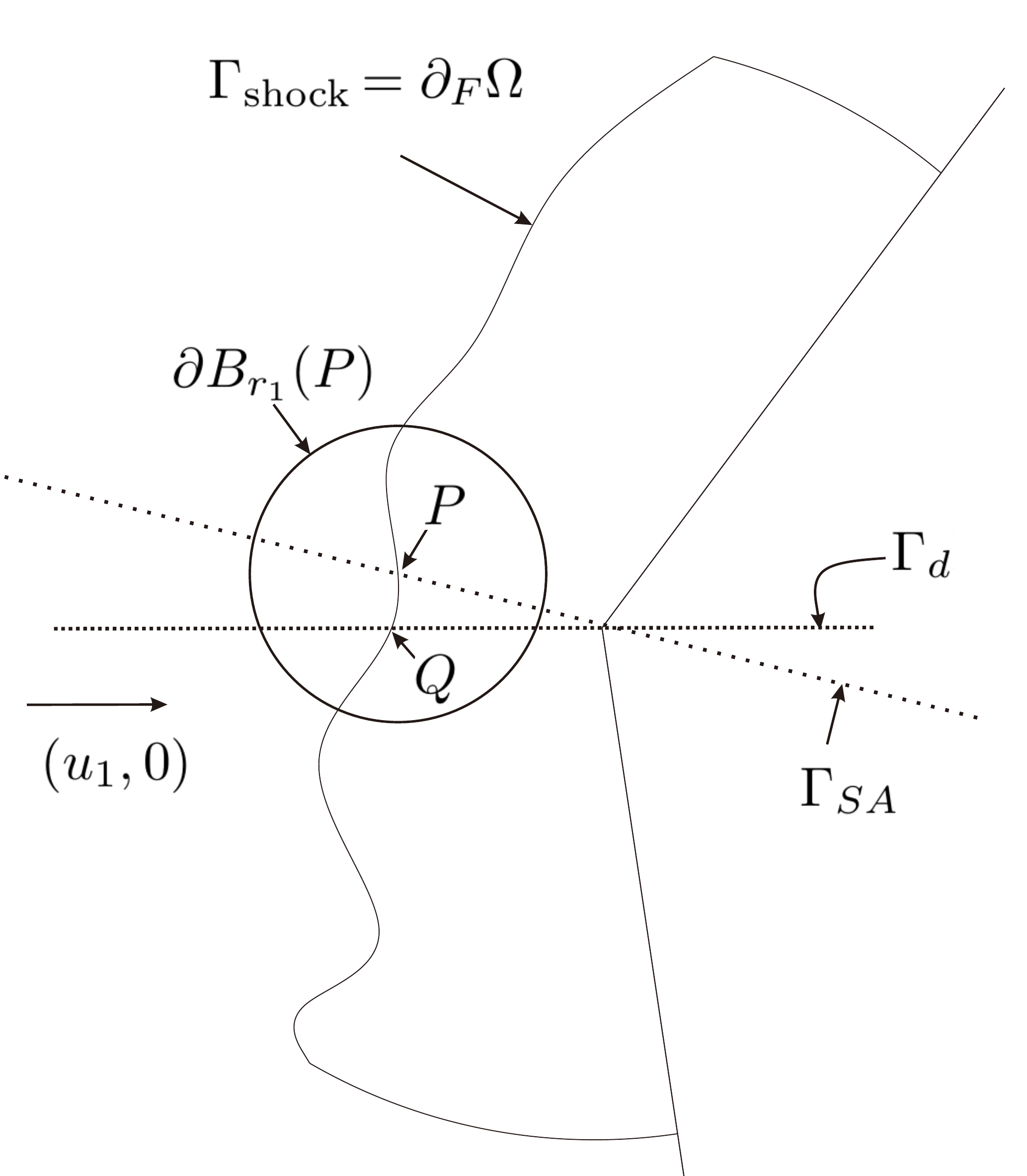}
\caption{Gradient Estimate near Free Boundary}
\label{fig:Gradientestimatenearfreeboundary}
\end{minipage}
\end{figure}
\subsubsection{Gradient Estimate near Free Boundary on $\G_{SA}$}\label{gradientestimatenearfreeboundary}

In this section, with result of last section,
\[|\phi-\tphi|\leq C_p\delta,\text{  in $B_{r_1}(P)$},\]
 we can prove on $B_{r_3}(P)\cap\G_{SA}\, |\phi_\eta|\leq C_p\delta$, where $r_3$ is some constant depends on physical quantities only.

We define $\G_d $ as the line parallels to the direction of upcoming flow, and passes zero. And denote $\G_d \cap \gammashock= Q$.
For any point $p$, reflection of $p$ across $\G_d$ is denoted by $\overline p$, and define $\overline\phi(p)=\phi(\overline p)$.

Since angle between $\G_d$ and $\gammashock$ is $\delta$, we know when $ Z \delta<{r_1 \over 8}$, $|\phi-\overline\phi|\leq C_p \delta$ in $B_{r_1\over 2}(Q)$.

Now we rotate coordinate (with $\MC$ as center) s.t. $\G_d$ is $\xi$-axis, in new coordinate the form of quasi-linear potential flow equation does not change and direction of up-stream flow becomes $(u_1,0)$.

Then we map $B_{r_1 \over 2}(Q)\cap\Omega$ to part of half plane with :
\[\MH(\xi,\eta)=(x,y)=(\phi_1-\phi,\eta)=(u_1\xi +C_Q-\phi,\eta) \ (\text{here we choose } C_Q=\phi_1(Q)-u_1\xi_Q, \text{so } \MH(Q)=(0,0)),
\]
 when $\sigma$ small enough, i.e. $\nabla \phi $ small enough, map above is well defined and is a $C^2$ map. Then we can choose $r_2$ s.t. half of $B_{r_2}(0)$ is contained in $\MH( B_{r_1\over 2}(Q)\cap \Omega)$ .
\begin{figure}[h]
\includegraphics[width=15cm]{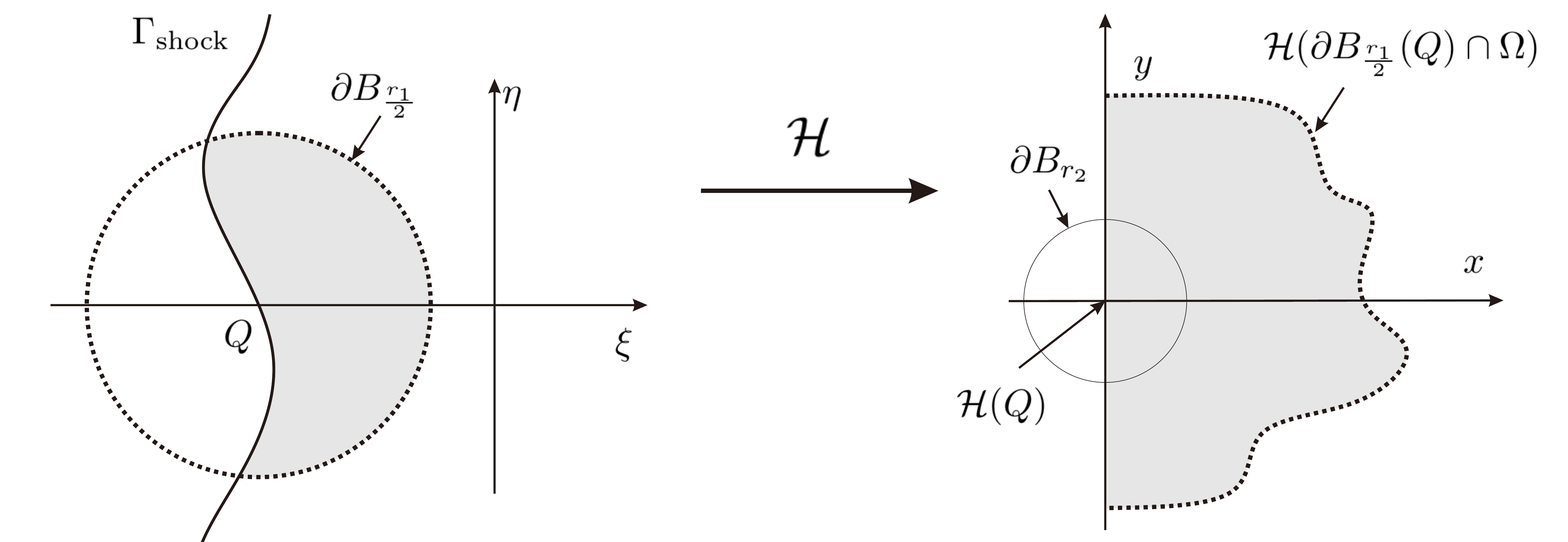}
\caption{Hodograph Map}
\end{figure}

Denote $\MH_*(\phi)$ by $g$, so $g(\MH(\xi,\eta))=\phi(\xi,\eta)$, and $\bar g(x,y)=g(x,-y)$, $\overline{(x,y)}=(x,-y)$, for $(x,y)\in B_{r_2}\cap\{x>0\}$, $\exists\ \xi_1, \xi_2$ 
\begin{align*}
   &|g(x,y)-g(x,-y)|=|g(\MH(\xi_1,y))-g(\MH(\xi_2,-y))|\\
=&|\phi(\xi_1,y)-\phi(\xi_2,y)|+|\phi(\xi_2,y)-\phi(\xi_2,-y)|\\
=&|\phi(\xi_1,y)-\phi(\xi_2,y)|+C_p\delta,
\end{align*}
 we have $u_1\xi_1+C_Q-\phi(\xi_1,y)=u_1\xi_2 +C_Q-\phi(\xi_2,y)$ so:
\[|u_1(\xi_1-\xi_2)|=|\phi(\xi_1, y)-\phi(\xi_2,y)|\leq|\nabla \phi||\xi_1-\xi_2|+C_p\delta
\]
\[\Rightarrow|\xi_1-\xi_2|\leq C_p\delta \ \ (\text{given  } |\nabla\phi|\leq{ u_1\over 2})\]
 then
 \begin{equation}|g-\overline g|\leq C_p\delta.\label{symmetryafterhodographtransform}
\end{equation}
By computation we get:
\[
\begin{array}{cc}
g_x=\frac{\phi_\xi}{u_1-\phi_\xi}             &   g_{xx}=\frac{u_1}{(u_1-\phi_\xi)^3}\phi_{\xi\xi}\\
g_y=\frac{u_1\phi_\eta}{u_1-\phi_\xi}    &  g_{xy}=\frac{u_1\phi_\eta}{(u_1-\phi_\xi)^3}\phi_{\xi\xi}+\frac{u_1}{(u_1-\phi_\xi)^2}\phi_{\xi\eta}\\
                                                                         &g_{yy} =\frac{u_1\phi^2_\eta}{(u_1-\phi_\xi)^3}\phi_{\xi\xi}+\frac{2u_1\phi_\eta}			   {(u_1-\phi_\xi)^2}\phi_{\xi\eta}+\frac{u_1}{u_1-\phi_\xi}\phi_{\eta\eta}
\end{array}
\]
and,
\[
\begin{array}{cc}
\phi_\xi=\frac{u_1g_x}{1+g_x}  &      \phi_{\xi\xi}=\frac{u_1^2}{(1+g_x)^3}g_{xx}\\
\phi_\eta=\frac{g_y}{1+g_x}      &      \phi_{\xi \eta}=\frac{-u_1g_y}{(1+g_x)^3}g_{xx}+\frac{u_1}{(1+g_x)^2}g_{xy}\\
                                                        &       \phi_{\eta\eta}=\frac{g_y^2}{(1+g_x)^3}g_{xx}-\frac{2g_y}{(1+g_x)^2}g_{xy}+\frac{1}{1+g_x}g_{yy}
\end{array}
\]

 \[\frac{1}{C_p}|p_1-p_2|\leq|\MH(p_1)-\MH(p_2)|\leq C_p| p_1-p_2|,\] so, with Proposition \ref{prop:C2chiestimateawayfromsoniccircleandcornerofwedge}, $g $ is still a $C^{2,\chi}$ function on $(x,y)$-plane.

Plug $\phi_\xi,\ \phi_\eta,\ \phi_{\xi\xi},\ \phi_{\xi\eta},\ \phi_{\eta\eta}$ into quasi-linear equation of $\phi$ and $RH$ condition we get, in $B_{r_2}\cap\{x> 0\}$ $g$ satisfies a quasi-linear equation which is elliptic (when equation for $\phi$ is elliptic), and on $B_{r_2}\cap \{x=0\}$ $g$ satisfies a nonlinear boundary condition.

Then we consider $k=g-\overline g$ as in section \ref{gradientestimatenearcornerofwedge}, $k$ satisfies a linear equation, with coefficients depend on $D^2 g,\ \nabla g,\ g$.
 
So, equation of $k$:
    \[a_{ij}k_{ij}+b_ik_i+ck=0\] has $C^\chi$ coefficients,
and on $B_{r_2}\cap\{x=0\}$, there exists $\vbeta$, s.t. $k$ satisfies :
   \[k-D_{\vbeta} k=0,\]
where, $\vbeta $ is oblique(giver $\sigma$ small enough) and ${\vbeta}$ a $ C^{1,\chi}$ function of $y$. Then by Schauder boundary estimate and (\ref{symmetryafterhodographtransform}) \[ |\nabla k|\leq \sup_{B_{r_2}}C_p|k|\leq C_p\delta\ \ \ \text{in }B_{r_2\over 2}(0).\]
Transform back to $(\xi,\eta)$-plane, with expression of gradient $\phi$, we have :
\[\ \ \ |\phi_\eta|\leq C_p\delta,\ \text{on  } \MH^{-1}(\{y=0\}\cap B_{r_2\over 2}(Q)).\]
When $\delta$ small enough there exists $ r_3$, s.t. $\MH^{-1}(B_{r_2 \over 2})\supset B_{r_3}(Q)$. And because $|D^2\phi|\leq C$, when rotate coordinate back, we get:
\[|\phi_\eta|\leq C_p\delta   \    \text{ in }B_{r_3\over 2}(P).\]

\subsection{Summary}
From (\ref{integralsymmetricestimateinequality}), we have derived $L^\infty$ and $\eta-$derivative estimate near conner of wedge and where symmetry axis intersects shock. Actually, when it's away from corner of wedge and shock, the estimate is easier. And as a summary, we will have the following estimate:

\begin{proposition}\label{prop:symmetricestimatesummary}
For $\phi$, a regular solution in the sense of Definition \ref{def:definitionofregularsolution}, there exists $\epsilon_{S}$ and $C_p$, which only depend on physical quantities, i.e. does not depend on $\sigma,\delta$, s.t. for $(\xi,\eta)\in\Omega\cap\tilde\Omega$ and $|\eta|<\epsilon_{S}$,
\[|\phi(\xi,\eta)-\phi(\xi,-\eta)|\leq C_p\delta.\]
And for any point $p$ on symmetry axis of wedge,
there exists, $0<\alpha<1$ and $C_p$ also do not depend on $\sigma, \delta$, s.t.
\[|\partial_\eta\phi|(p)\leq C_p d^{\alpha-1}(p,\MC),\]
where, $d(p,\MC)$ denotes distance of $p$ to corner of wedge.
\end{proposition}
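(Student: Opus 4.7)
The plan is to assemble the proposition from the local bounds already proved in Sections \ref{linftyestimatenearcornerofwedge}--\ref{gradientestimatenearfreeboundary} and to fill in the ``middle'' region of the symmetry axis (uniformly away from both $\MC$ and $\gammashock$) by standard elliptic regularity. First I would fix $\epsilon_{S}$ so small that the strip $\Omega_c\cap\{|\eta|<\epsilon_{S}\}$ is covered by three overlapping pieces: the corner ball $B_{Z/8}(\MC)\cap\Omega_c$, the free-boundary ball $B_{r_1}(P)\cap\Omega_c$ where $P=\gammashock\cap\Gamma_{SA}$, and a compact middle piece $\mathcal{M}$ that stays a uniform positive distance away from both $\MC$ and $\gammashock$. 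The radii $Z/8$ and $r_1$ depend only on physical quantities, and $\epsilon_{S}$ can likewise be chosen depending only on physical quantities.

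For the $L^\infty$ symmetric estimate $|\phi(\xi,\eta)-\phi(\xi,-\eta)|\le C_p\delta$, the corner and free-boundary pieces are already handled in Sections \ref{linftyestimatenearcornerofwedge} and \ref{linftyestimatenearfreeboundary}. On the middle piece $\mathcal{M}$, Proposition \ref{prop:C2chiestimateawayfromsoniccircleandcornerofwedge} gives uniform $C^{2,\chi}$ bounds on $\phi$ and $\tilde\phi$, so the linear equation \eqref{eq:equationofhbytakingsympledifference} satisfied by $h=\phi-\tilde\phi$ is uniformly elliptic on $\mathcal{M}$ with $C^\chi$ coefficients bounded by $C_p$. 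A Moser $L^\infty$--$L^3$ bound, carried out on balls centred in $\mathcal{M}$ exactly as in Section \ref{linftyestimatenearcornerofwedge} but without the corner geometry, combined with the global integral estimate \eqref{integralsymmetricestimateinequality}, gives $\sup_{\mathcal{M}}|h|\le C_p\bigl(\int_{\Omega_c}|h|^3\bigr)^{1/3}\le C_p\delta$. Taking the maximum of the three local bounds yields the claimed symmetric estimate.

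For the pointwise $\eta$-derivative estimate on $\Gamma_{SA}$, Section \ref{gradientestimatenearcornerofwedge} gives $|\phi_\eta|\le C_p\delta\, r^{\alpha/2-1}$ near $\MC$ and Section \ref{gradientestimatenearfreeboundary} gives $|\phi_\eta|\le C_p\delta$ near $P$. On the middle piece $\mathcal{M}$ I would apply Schauder estimates to \eqref{eq:equationofhbytakingsympledifference}: since $h$ is odd in $\eta$ it vanishes on $\Gamma_{SA}$, so interior-boundary $C^{1,\chi}$ regularity gives $|\nabla h|\le C_p\sup|h|\le C_p\delta$ on $\mathcal{M}\cap\Gamma_{SA}$; on the axis $\phi_\eta=\tfrac12 h_\eta$ so $|\phi_\eta|\le C_p\delta$ there. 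Renaming $\alpha/2\mapsto\alpha$ in the corner estimate and observing that $d(p,\MC)$ is bounded below by a physical constant on $\mathcal{M}$ and in the free-boundary ball, all three bounds are dominated by $C_p\delta\, d^{\alpha-1}(p,\MC)$ (and in particular by the stated $C_p d^{\alpha-1}(p,\MC)$, since $\delta\le\sigma\le\epsilon_T$).

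The main obstacle is really just the bookkeeping: one must verify that $\epsilon_{S}$, $r_1$, $Z/8$, and the size of $\mathcal{M}$ can all be chosen depending only on physical quantities, independent of $\sigma$ and $\delta$. A minor subtlety is that boundary Schauder estimates up to $\Gamma_{SA}$ require a Dirichlet condition for $h$ there, which holds trivially since $h(\xi,0)=0$ by the definition of $\tilde\phi$; the coefficients of \eqref{eq:equationofhbytakingsympledifference} extend across $\Gamma_{SA}$ by reflection and inherit $C^\chi$ regularity from Proposition \ref{prop:C2chiestimateawayfromsoniccircleandcornerofwedge} applied to both $\phi$ and its reflection. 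Once these ingredients are in place a finite cover argument completes the proof.
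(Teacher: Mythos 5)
Your proposal is correct and follows essentially the same route as the paper: the paper's ``proof'' of this proposition is nothing more than the assembly of the corner estimates (Sections \ref{linftyestimatenearcornerofwedge}--\ref{gradientestimatenearcornerofwedge}) and the free-boundary estimates (Sections \ref{linftyestimatenearfreeboundary}--\ref{gradientestimatenearfreeboundary}), with the middle portion of the axis dismissed as ``easier'', which is exactly the interior Moser/Schauder argument you supply, together with the same renaming of the exponent and absorption of the $C_p\delta$ bound into $C_p d^{\alpha-1}$. The only small caveat is that Proposition \ref{prop:C2chiestimateawayfromsoniccircleandcornerofwedge} is stated only on $\mathcal{F}=\overline\Omega\cap\{|\eta|<Y/4\}\cap\{\xi+Z<Z/4\}$, i.e.\ only near the shock, so on the part of your middle piece $\mathcal{M}$ with $\xi>-3Z/4$ you should invoke the interior version of the same uniform-ellipticity and quasiconformal/Schauder estimates (valid uniformly away from the sonic circles and the corner) rather than that proposition verbatim.
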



\section{Anti-Symmetric Estimates}\label{antisymmetricestimate}
\subsection{Symmetrization of Upcoming Flow}
 In this section we symmetrize upcoming flow (and so free boundary condition), w.r.t. $\eta$. With this symmetrization, we will be able to do more delicate estimate. 

\begin{figure}
       \centering
    \includegraphics[height=6cm]{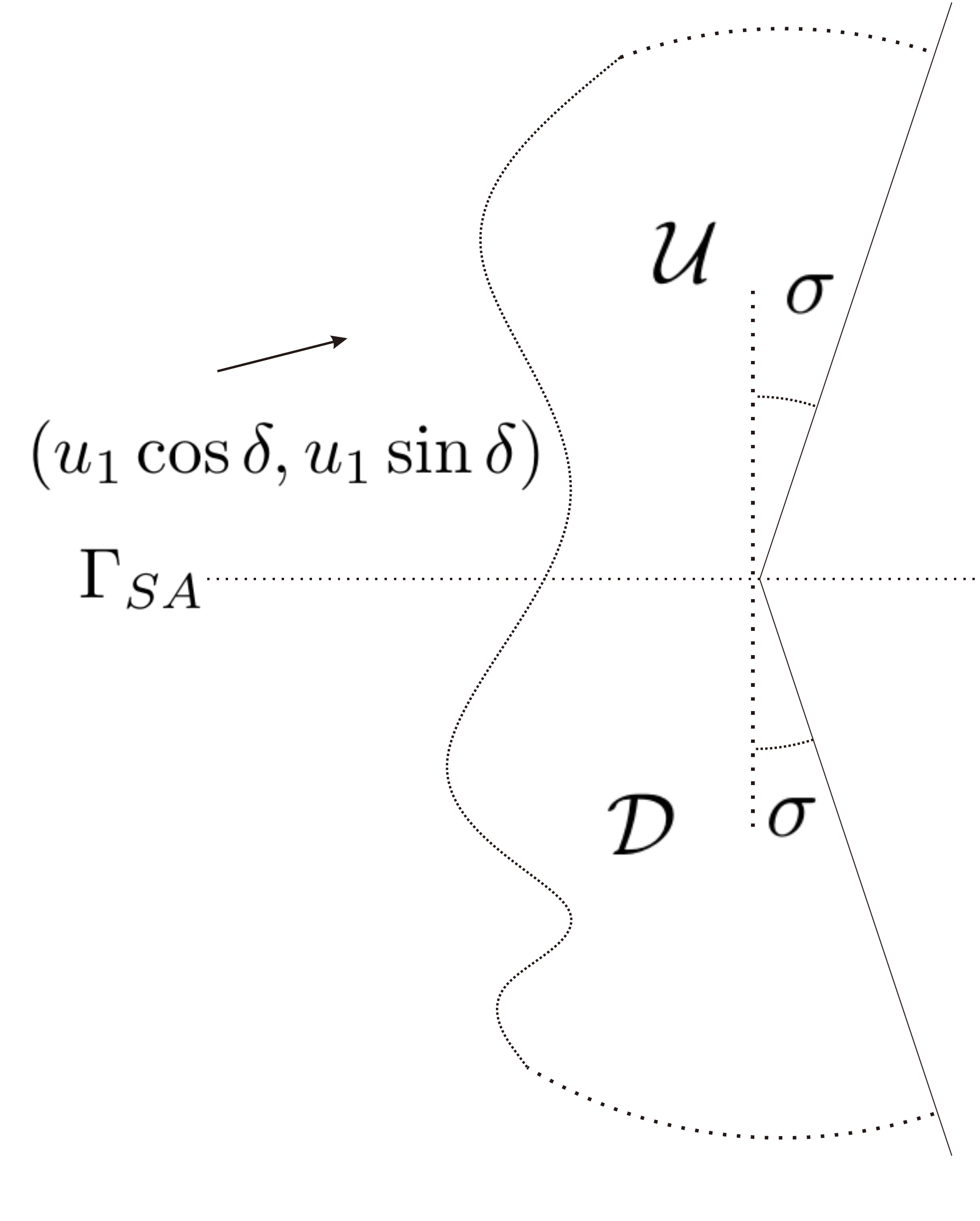}
\end{figure}
\subsubsection{Above Symmetry Axis of Wedge}\label{5.1.1}
In this section denote $\MU=\Omega\cap\{\eta>0\}$, in $\MU$ define:
   \begin{align*}
&\psi=\phi-(u_1\sin\delta\tan\sigma)\xi-(u_1\sin\delta)\eta,\\
&\psi_1=\phi_1-(u_1\sin\delta\tan\sigma)\xi-(u_1\sin\delta)\eta,\\
&\psi_2=\phi_2^+-(u_1\sin\delta\tan\sigma)\xi-(u_1\sin\delta)\eta.\\
\end{align*}
Note that now $\nabla\psi_1=(u_1\cos \delta-u_1\sin\delta\tan \sigma,0),$ so it's parallel to symmetry axis of wedge, and $\psi$, $\psi_2$ still satisfy Neumann boundary condition on $\gammawedge$.

We consider a new coordinate:
\[
x=\xi-u_1\sin\delta\tan\sigma,\ \ y=\eta-u_1\sin\delta.
\]
The physics meaning of this transformation is that, now we observe flow in a reference system moving with velocity $(u_1\sin\delta\tan\sigma,u_1\sin\delta)$, comparing to original coordinate system.

In new coordinate:\begin{align*}
\rho^{\gamma-1}=&\rho_0^{\gamma-1}-\psi-u_1\sdtc(x+u_1\sdtc)-u_1\sin\delta(y+u_1\sin\delta)\\
                               &+(\psi_x+u_1\sdtc)(x+u_1\sdtc) +(\psi_y+u_1\sin\delta)(y+u_1\sin\delta)\\
                               &-{1\over 2}(\psi_x+u_1\sdtc)^2-{1\over 2}(\psi_y+u_1\sin\delta)^2\\
                            =&\rho_0^{\gamma-1}-\frac{u_1^2\sin^2\delta}{2\cos^2\sigma}-\psi+\psi_x x+\psi_y y -{1\over 2}|\nabla\psi|^2\\
                            =&K-\psi+\psi_x x+\psi_y y -{|\nabla\psi|^2\over 2} \ \left(K\triangleq\rho_0^{\gamma-1}-\frac{u_1^2\sin^2\delta}{2\cos^2\sigma}\right),
\end{align*}
\begin{align*}
\psi_1=&K-\rho_1^{\gamma-1}+u_1(\cos\delta-\sdtc)x-{u_1^2(\cos\delta-\sdtc)^2\over 2},\\
\psi_2=&K-{\rho_2^+}^{\gamma-1}+(u_2^+-u_1\sdtc)x+(v_2^+-u_1\sin\delta)y\\
                        &  -\frac{(u_2^+-u_1\sdtc)^2+(v_2^+-u_1\sin\delta)^2}{2}.
\end{align*}

\subsubsection{Below Symmetry Axis of Wedge}\label{5.1.2}
 Then we consider the area below $\Gamma_{SA}$, we denote $\Omega\cap \{\eta<0\}$ by $\MD$, denote its reflection into $\Gamma_{SA}$ by $\tMD$ as in section \ref{symmetricestimate}. And also as in section 4 we define $\tphi(\xi,\eta)=\phi(\xi,-\eta)$. 
In $\tMD$, define:
\begin{align*}
&\tpsi=\tphi+u_1\sin\delta\tan\sigma\xi+u_1\sin\delta\eta,\\
&\tpsi_1=\widetilde{\phi_1}+u_1\sin\delta\tan\sigma\xi+u_1\sin\delta\eta,\\
&\tpsi_2=\widetilde{\phi_2^-}+u_1\sin\delta\tan\sigma\xi+u_1\sin\delta\eta.\\
\end{align*}
As in section \ref{5.1.1}, now $\nabla\tpsi_1=(u_1\cos \delta+u_1\sin\delta\tan \sigma,0)$, i.e. it's parallel to symmetry axis of wedge.
We consider a new coordinate:
\[
 x=\xi+u_1\sin\delta\tan\sigma,\ \  y=\eta+u_1\sin\delta.
\]
The physics meaning of this transformation is that, now we observe flow in a reference system moving with velocity
 $(-u_1\sin\delta\tan\sigma,-u_1\sin\delta)$, comparing to original coordinate system.

Also under new coordinate:
\begin{align*}
\hrho^{\gamma-1} =&K-\tpsi+\tpsi_x x+\tpsi_y y -{|\nabla\tpsi|^2\over 2}\ \ \text{(as in section \ref{5.1.1}, $K=\rho_0^{\gamma-1}-\frac{u^2_1\sin^2\delta}{2\cos^2\sigma}$)}\\
\tpsi_1=&K-\rho_1^{\gamma-1}+u_1(\cos\delta+\sdtc)x-{u_1^2(\cos\delta+\sdtc)^2\over 2}\\
\tpsi_2=&K-{\rho_2^-}^{\gamma-1}+(u_2^-+u_1\sdtc)x+(-v_2^-+u_1\sin\delta)y\\
                        &  -\frac{(u_2^-+u_1\sdtc)^2+(v_2^--u_1\sin\delta)^2}{2}
\end{align*}

\subsubsection{Uniform Coordinate}
Then we let $(x,y)$ coordinate of section \ref{5.1.1} and that of section \ref{5.1.2} coincide. The position of $\MU, \tMD$ will be as illustrated as in Figure \ref{fig:delicateintegralcomparison}, according to the computation in section \ref{strongercomparisonbasedonderivative}.

 And we compare $\tpsi_1$ with $\psi_1$ and $\tpsi_2$ with $\psi_2$ :
\[
|\tpsi_1-\psi_1|=|-2u_1\sdtc x+2u_1^2\cos\delta\sdtc|\leq C_p\sigma\delta
\]
\[
|\nabla\tpsi_1-\nabla\psi_1|\leq C_p\sigma\delta
\]
\begin{equation}
\psi_2-\tpsi_2\geq\frac{u_1 Z}{2X}\delta y-C_p\sigma\delta>0,\label{crucialinequalityofpotential}
\end{equation}
in $\left\{y>{\sqrt{\overline c_2^2- Z^2}\over 2}\right\}, $providing $\sigma$ small enough.

\subsection{Integral Comparison}

\begin{figure}
     \centering
   \includegraphics[height=7cm]{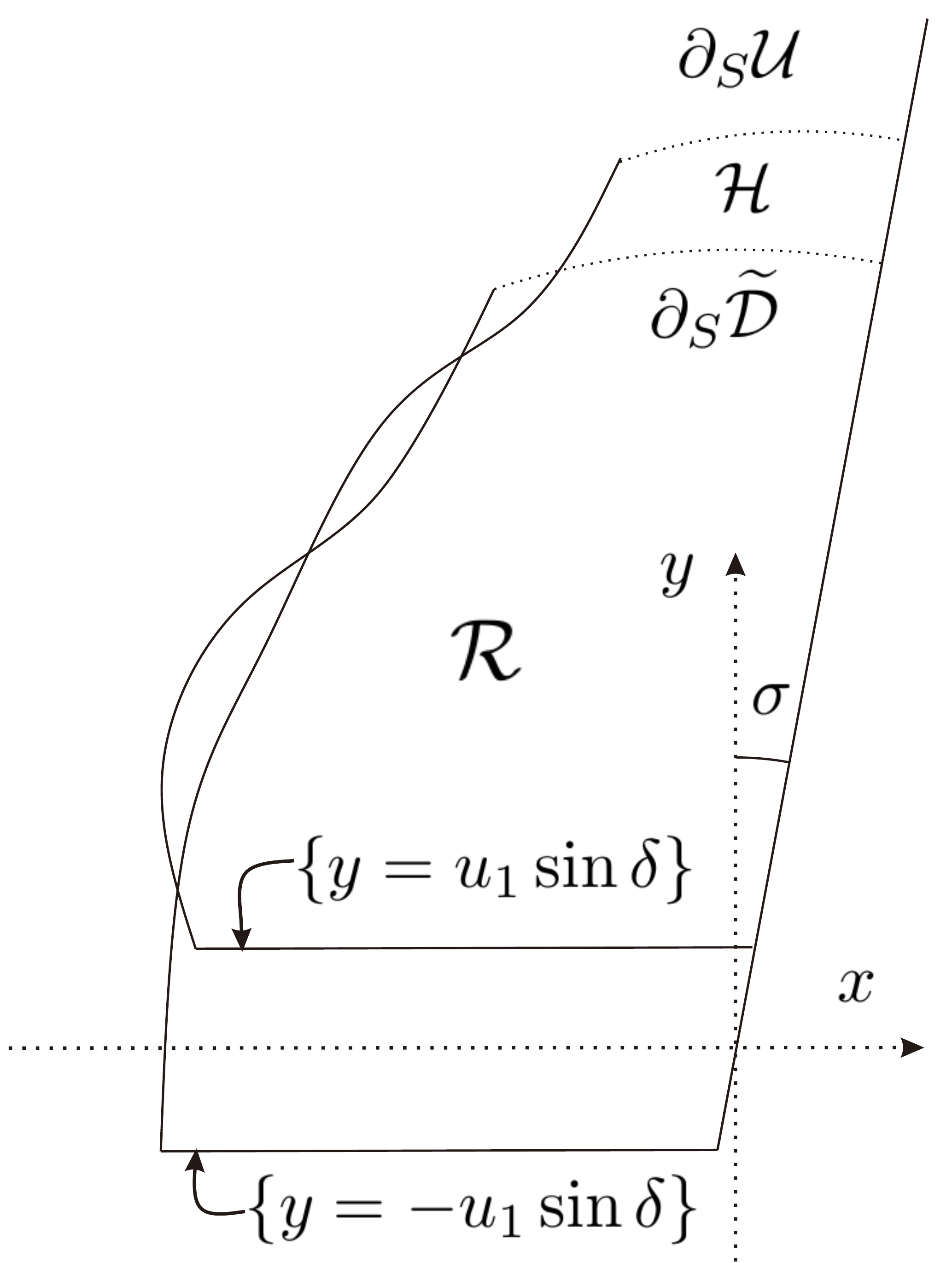}\caption{Integral Comparison}
\label{fig:delicateintegralcomparison}
\end{figure}

Define:
\[ \MR=\MU\cap\tMD .\]
As in section \ref{integralsymmetricestimate}, we can separate $\partial \MR(\partial \MU,\partial \tMD)$ into free boundary part, sonic circle part and symmetry axis part and denote them as $\partial_F\MR(\partial_F \MU,\partial_F \tMD),\partial_S \MR(\partial_S \MU,\partial_S \tMD)$ and $\partial_A \MR(\partial_A \MU,\partial_A \tMD)$ respectively. We will use $\MH$ to denote the part of $\MU\setminus\tMD$ above $\partial_S\tMD$.

In new coordinate, pseudo-potential function and densities are
\[ \ f=\psi-{x^2+y^2\over 2},\ \ \tf=\tpsi-{x^2+y^2\over 2},\]
\[ \rho=(K-f-{|\nabla f|^2\over 2})^{1\over \gamma-1},\ \  \hrho=(K-\tf-{|\nabla \tf|^2\over 2})^{1\over \gamma-1}.
\]
Define:
\[k=\left\{\begin{array}{cc}
                 \tf-f=\tpsi-\psi & \text{in }\MR\\
                \tpsi_1-\psi        &\text{in }((\MU\bs\MR)\bs\MH) \cap\{y>u_1\sin \delta\}\\
                \tpsi-\psi_1        &\text{in }\tMD\setminus\MR
                 \end{array}
       \right.   
\]
\[k=\max\{k,0\},\ \ \ 
w=(k^+)^2.
\]
Now consider the following integral:
\begin{align*}
   &\int_{\MR}\hrho\nabla\tf\cdot\nabla w-\rho\nabla f\cdot\nabla w-2\hrho w+2\rho w\\
=&\int_{\MR}\nabla\cdot[\hrho\nabla\tf w-\rho\nabla f w]\\
=&\int_{\pMR}[\hrho\nabla\tf w-\rho\nabla f w]\cdot\vn\\
\triangleq&\int_{\pFMR}+\int_{\pSMR}+\int_{\pAMR}.
\end{align*}
Here, we don't need to consider integral on $\gammawedge$, since $f_\vn=0$ on $\gammawedge$. And $\int_{\pSMR}=0$, because on $\pSMR$ $\psi\geq\tpsi_2$, $k=0$.

So we only need to consider integration on $\partial_F\MR$ and $\partial_A\MR$.
\subsubsection{Integral on Free Boundary}\label{antisymmetricintegralonfreeboundary}
In this subsubsection, all analysis is in area between $\partial_S\tMD$ and $\{y=u_1\sin\delta\}$, so for convenience of notation, any set $A$ should be understood as  $(A\cap {\{y\geq u_1\sin\delta\}})\setminus\MH$.
\begin{align*}
\int_{\pFMR}=&\int_{(\pFMU)\cap\tMD}\vn \cdot[\hrho\nabla\tf w-\rho\nabla f w]+\int_{(\pFtMD)\cap\MU}\vn\cdot[\hrho\nabla\tf w-\rho\nabla f w]\\
&+\int_{(\pFMU)\cap(\pFtMD)}[\hrho\nabla\tf w-\rho\nabla f w]\cdot\vn.\end{align*}
With the same argument in section \ref{integralsymmetricestimate} from  (\ref{symmetricintegralonshockmeetsshock}) to (\ref{boundaryestimateofvectoronshockmeetsshock}), we know 
\[\int_{(\pFMU)\cap(\pFtMD)}[\hrho\nabla\tf w-\rho\nabla f w]\cdot\vn=\int_{(\pFMU)\cap(\pFtMD)}[\hrhoone\nabla\tf_1 w-\rho_1\nabla f_1 w]\cdot\vn\leq C_p\delta^3\sigma.\]

\begin{remark}
Note, here, we can control integral by $C_p\delta^3\sigma$, while in section \ref{integralsymmetricestimate}, our control can only be as small as $C_p\delta^3$. The reason is that, in this section, we symmetrized upcoming flow, s.t. 
\[|f_1-\tf_1|+|\nabla f_1-\nabla\tf_1|\leq C_p\sigma\delta.\]
\end{remark}
So,
\begin{align*}
\int_{\pFMR}   =&\int_{(\pFMU)\cap\tMD}\vn \cdot[\hrho\nabla\tf w-\rho_1\nabla f_1 w]+\int_{(\pFtMD)\cap\MU}\vn\cdot[\hrhoone\nabla{\tf}_1 w-\rho\nabla f w]+C_p\delta^3\sigma\\
                       =&\int_{\tMD\setminus\MU}-\nabla\cdot[\hrho\nabla\tf w-\rho_1\nabla f_1 w]-\int_{\MU\setminus\tMD}\nabla\cdot[\rho_1\nabla\tf_1 w-\rho\nabla f w]\\
                         &+\int_{(\pFtMD)\setminus\MU}\underbrace{\vn\cdot[\hrho\nabla \tf -\rho_1\nabla f_1 ]}_{\leq C_p \sigma\delta}w
                          + \int_{(\pFMU)\setminus\tMD}\underbrace{\vn\cdot[\rho_1\nabla\tf_1-\rho\nabla f ]}_{\leq C_p\sigma\delta}w\\
                        &+\underbrace{\int_{(\pStMD)\setminus\MU}\vn\cdot[\hrho\nabla\tf w-\rho_1\nabla f_1 w]}
                             _{\leq C_p \sigma^2\delta^3(k^+\leq C_p\sigma\delta, \ |(\pStMD)\setminus\MU|\leq C_p\delta)}
                              +\underbrace{\int_{I_\triangle}\vn\cdot[\rho_1\nabla\tf_1 w-\rho \nabla f w]}
                                  _{\leq C_p\sigma^3\delta^3(k^+\leq C_p\sigma\delta,|I_\triangle|\leq C_p\sigma\delta)}+C_p\delta^3\sigma
\end{align*}
 In above $I_\triangle$ is an interval (may degenerate to be empty set), 
\[I_\triangle=\left\{\begin{array}{cc}
                    (\{y=u_1\sin\delta\}\cap\MU)\bs \pAtMD & \text{if }(\{y=u_1\sin\delta\}\cap\overline\MU)\supset \pAtMD\\
                     \pAtMD\bs(\{y=u_1\sin\delta\}\cap\MU) & \text{if }(\{y=u_1\sin\delta\}\cap\overline\MU)\subset \pAtMD
                    \end{array}         
           \right..
\]
In each case length of $I_{\triangle}\leq C_p\sigma\delta$, since ${\left|1\over \text{slope of } \gammashock\right|}\leq C_p\sigma$ (\ref{estimateofslopeofshock}). So,
\begin{align*}
          \int_{\pFMR}        \leq&\int_{\tMD\setminus\MU}2k^+[(\hrho-\rho_1)k^+-(\hrho\nabla\tf-\rho_1\nabla f_1)\cdot\nabla k]\\
                         &+\int_{\MU\setminus\tMD}2k^+[(\rho_1-\rho)k^+-(\rho_1\nabla\tf_1-\rho\nabla f)\cdot\nabla k]
                                 +C_p\sigma\delta^3
\end{align*}

Then we consider integrals in $\tMD\setminus\MU$ and $\MU\setminus\tMD$ separately.

1)In $\tMD\setminus\MU$, \[k=\tpsi-\psi_1, \  k_\xi<-{u_1\over 2}<0\  (\text{providing } \sigma \text{ small enough}),\]
   and on $\partial(\tMD\cup\MU)$ $|k|\leq C_p \sigma\delta$, so $k^+\neq 0$ only in a $C_p\sigma\delta$ neighborhood of $\partial(\tMD\cup \MU)$, and in this neighborhood $k^+\leq C_p\sigma\delta$, which implies
\[
\int_{\tMD\setminus\MU}2(k^+)^2(\hrho-\rho_1)\leq C_p\sigma^3\delta^3.
\]
And 
\[
-(\hrho\nabla\tf-\rho_1\nabla f_1)\cdot\nabla k\leq-(\hrho\nabla\tf-\hrhoone\nabla\tf_1)\cdot(\nabla\tf-\nabla\tf_1)+C_p\sigma\delta\leq C_p\sigma\delta
\]so integral on $\tMD\setminus\MU$ is controlled by $C_p\sigma^3\delta^3$.

2)In $\MU\setminus\tMD$ 
\begin{align*}
       &-(\rho_1\nabla\tf_1-\rho\nabla f)\cdot(\nabla\tf_1-\nabla f)\\
\leq&-(\rho_1\nabla f_1-\rho\nabla f)\cdot(\nabla f_1-\nabla f)+C_p\sigma\delta\\
\leq&C_p\sigma\delta-(f_1- f)(\rho-\rho_1)
\end{align*}so $-(\rho_1\nabla\tf_1-\rho\nabla f)\cdot(\nabla \tf_1-\nabla f)>0$ only in a $C_p\sigma\delta$ neighborhood of $(\pFMU)\setminus\tMD$, and in this neighborhood it's controlled by $C_p\sigma\delta$, also $k\leq C_p\sigma\delta$, which implies
\[
\ \ \int_{\MU\setminus\tMD}-2k^+(\rho_1\nabla\tf_1-\rho\nabla f)\cdot\nabla k\leq C_p\sigma^3\delta^3.
\]
So we get
\[\int_{\pFMR}\leq C_p\sigma\delta^3.\]
\subsubsection{Integral on Symmetry axis}
To estimate $\int_{\pAMR}$ we need to estimate $|\nabla(\psi-\tpsi)|$ along $\pAMR=\{y=\sin\delta u_1\}\cap\partial\MR.$

First, denote length of $\Omega\cap\Gamma_{SA}$ by $T$ ,so:
\[
\{(x,y)=(t-u_1\sdtc,-u_1\sin\delta)\mid t\in(-T,0)\}=\pAMU,
\]
and\[
\{(x,y)=(t+u_1\sdtc,+u_1\sin\delta)\mid t\in(-T,0)\}=\pAtMD.
\]
By definition of $\tpsi$, for $t\in (-T,0)$
\begin{align*}
    \tpsi_x(t+u_1\sdtc,u_1\sin\delta)
= \psi_x(t-u_1\sdtc,-u_1\sin\delta)\ 
\end{align*}
So,
\begin{align*}
  |  \tpsi_x(t+u_1\sdtc,u_1\sin\delta)-\psi_x(t+u_1\sdtc,u_1\sin\delta)|\leq C_p\delta|t|^{\alpha-1},
\end{align*}
because $|D^2\psi|\leq C_p d_{\MC}^{\alpha-1},$ where, $ d_{\MC}$ is distance to corner of wedge.
And also by definition,
\[
\psi_y(t-u_1\sdtc,-u_1\sin\delta)=-\tpsi_y(t+u_1\sdtc,u_1\sin\delta),
\]
by estimate in section \ref{gradientestimatenearcornerofwedge} and \ref{gradientestimatenearfreeboundary}, we have:
\[
|\psi_y(t-u_1\sdtc,-u_1\sin\delta)|\leq C_p\delta|t|^{{\alpha\over 2}-1}\ (-T<t<0)
\]
so,
\[|\psi_y(t+u_1\sdtc,u_1\sin\delta)|\leq C_p\delta|t|^{{\alpha\over 2}-1}+C_p\delta|t|^{\alpha-1}.
\]
Now, on $\pAMR$ we have
\begin{itemize}
\item
$|\nabla \psi-\nabla\tpsi|\leq C_p\delta |t|^{\frac{\alpha}{2}-1}$
                      \item  $ |\rho-\hrho|\leq C_p\delta |t|^{\frac{\alpha}{2}-1}       $     
                                                                        \item     $     |\psi-\tpsi|\leq C_p\delta\sigma$.
\end{itemize}
Plug above into $\int_{\pAMR}$, we get
\begin{equation}
 \int_{\pAMR}[\hrho\nabla\tf w-\rho\nabla f w]\cdot\vn\leq C_p(\sigma\delta)^2\int^T_0\delta|t|^{{\alpha\over 2}-1}dt\leq C_p\sigma^2\delta^3\label{antisymmetricboundaryintegralonsymmetricaxis}.
\end{equation}
Combine this with estimate of section \ref{antisymmetricintegralonfreeboundary} we get
\[\int_{\MR}(\hrho\nabla\tf-\rho\nabla f)\cdot\nabla w-2(\hrho-\rho)w\leq C_p\sigma\delta^3.
\]
Repeat what we did in section \ref{integralsymmetricestimate} from (\ref{connection}) to (\ref{integralsymmetricestimateinequality}), we get
\[\int_{\MR}(k^+)^3\leq C_p\int_{\MR}\hrho\nabla\tf\cdot\nabla w-\rho\nabla f\cdot\nabla w-2\hrho w+2\rho w.\]

\subsection{Contradiction}\label{5.3}
Take $r_M=\min{\{{ \epsilon_S},\epsilon_{A1}\}}$  (where, $\epsilon_S$ is the small constant in Proposition \ref{prop:symmetricestimatesummary}  and $\epsilon_{A1}$ is based on Lemma \ref{lem:singularityestimate} and equation of $k$), with Moser iteration, we have:
\[k^+\leq C_p\sigma^{1\over 3}\delta\text{ on } B_{r_M}\cap\{y\geq u_1\sin\delta\}\]

Now we go back to original $(\xi,\eta)$-coordinate, above implies on $( B_{r_M\over 2})\cap\MU$:
\[
\psi(\xi,-\eta)-\psi(\xi+2u_1\sdtc,\eta+2u_1\sin\delta)\leq C_p\sigma^{1\over 3}\delta,\]
so
\[
\psi(\xi,-\eta)-\psi(\xi,\eta)\leq C_p\sigma^{1\over 3}\delta, \text{ on } \partial B_{r_M\over 2}.
\]
Then by definition of $\psi$,
\[
 \ \ h= \phi(\xi,\eta)-\phi(\xi,-\eta)\geq 2u_1(\sin\delta)\eta-C_p\sigma^{1\over 3}\delta
                                                                                 \geq \delta({\eta \over C_p}-C_p\sigma^{1\over 3}).
\]
So $C_ph/ \delta$ satisfies the condition of Lemma {\ref{lem:singularityestimate}}, then when $\sigma$ small enough, such that
$C_p^2\sigma^{1\over 3}<\epsilon_{A3}$,
we find $\phi \not\in C^{1,\alpha}(\Omega\cup\MC) $,which contradicts with estimate in Proposition \ref{prop:gradientestimateatcornerofwedge}. 
So we proved our main theorem.
\

\appendix
\section{Estimates for Linear Equations with Singular Coefficients}

\begin{figure}
       \centering
    \includegraphics[width=6cm]{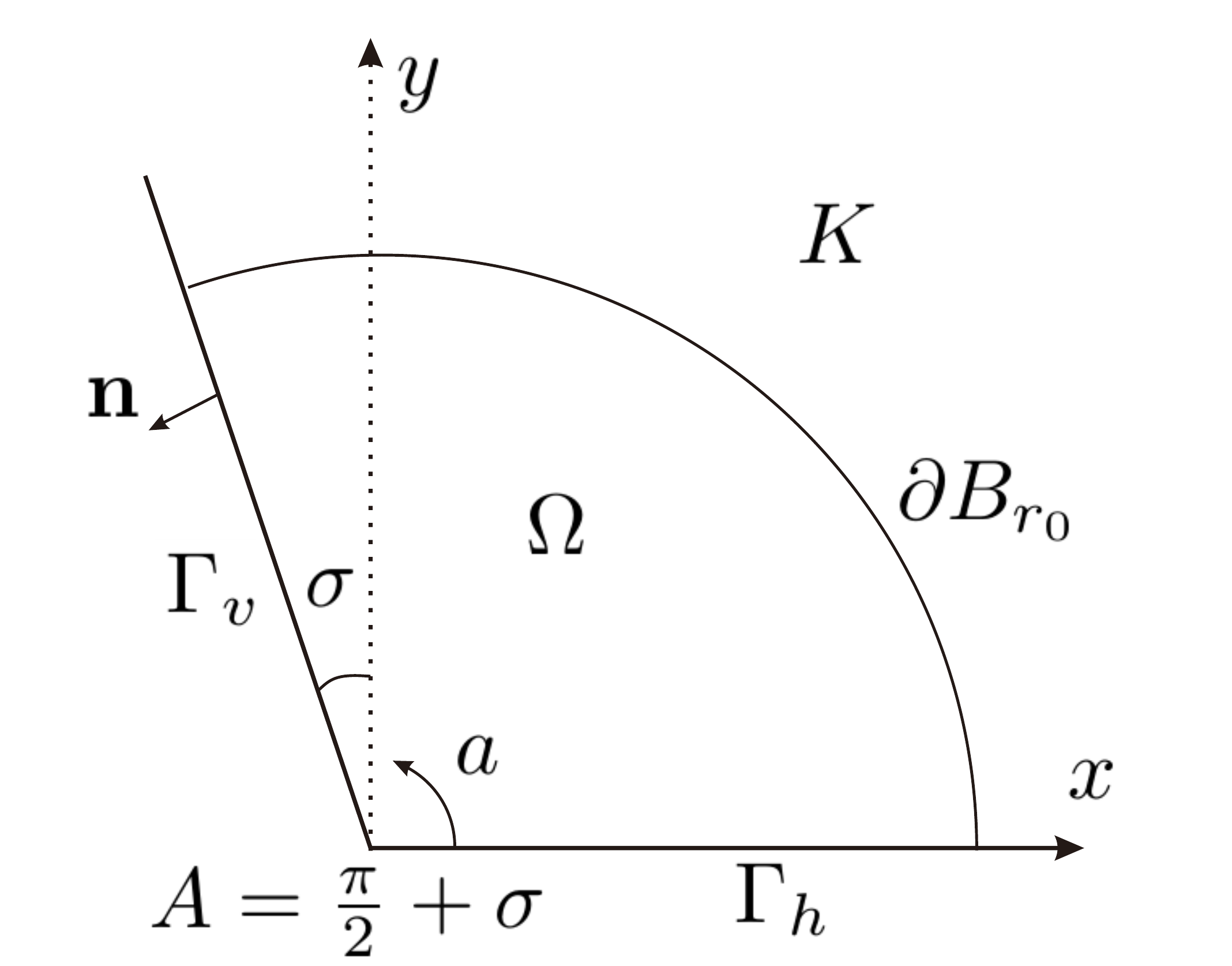}
\caption{Notation of Appendix A}
\label{fig:NotationofAppendixA}
\end{figure}

Lemma \ref{lem:maximumprinciple},\ref{Lemma A2},\ref{lem:singularityestimate} are estimates around the corner of a cone $K$(with angle $\sigma+{\pi \over 2},\ 0<\sigma<{\pi \over 2}$), in the cone $K$ we use coordinate $(x,y)=r(\cos a,\sin a)$, so $K=\{0\leq a\leq \frac{\pi}{2}+\sigma\}$.

In this appendix we define region $\Omega=(K \cap B_{r_0})^\circ$. And 
$(a_{ij}),\ b_k\ (1\leq i,j\leq2,\ 1\leq k\leq 3)$ will be coefficients of some equations in $\Omega$, satisfying:
\begin{eqnarray}
\lambda                \leq&(a_{ij})&\leq \Lambda,\label{cc}\\  
 |a_{ij}-\delta_{ij}| &\leq & C_E r^\alpha ,       \label{cc2}\\ 
|\vec{b}|      &\leq & C_E r^{\alpha-1}      \label{cc3}.
\end{eqnarray}\label{cc}

We denote $\Gamma_h=\{y=0\}\cap \partial K\cap B_{r_0},$ $\ \Gamma_v=\{a=\sigma+\pi/2\}\cap \partial K\cap B_{r_0}$, and $A={\pi \over 2}+\sigma$. $\vn$ is outer normal direction of $\Omega$ on $\Gamma_v$.

\subsection{Maximum Principle with Singular Coefficients}

\label{MaximumPrinciplewithSingularCoefficients}
\begin{lemma}\label{lem:maximumprinciple}

Given $w\in  C^0(\overline\Omega)\cap C^1(\Omega\cup\Gamma_v\setminus(0,0))\cap C^2(\Omega)$,
\begin{align}
&a_{ij}w_{ij}+b_1w_x+b_2w_y+b_3w\geq0\ \ \   &  &\text{ in } \Omega,\\
&w\leq 0  &  &\text{ on }\Gamma_h\cup((\partial B_{r_0})\cap K),\\ 
&D_{\vn}w\leq0\ \ &  &\text{ on }\Gamma_v.
\end{align}
There exists $\epsilon_{A1}$ depends on $C_E,\  \alpha,\ \lambda,$ if $r_0<\epsilon_{A1}(C_E,\alpha,\lambda)$ ,
then in $\Omega$, $w\leq 0.$   
\end{lemma}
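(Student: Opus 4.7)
The plan is to construct a positive supersolution $\Phi$ (a barrier) and then transfer the estimate to the quotient $v := w/\Phi$, whose associated operator has no zero-order term and is therefore amenable to the classical maximum principle and Hopf lemma. Let $A = \tfrac{\pi}{2}+\sigma$ and choose exponents $0 < \beta < \omega < \pi/(2A)$ together with a constant $K > 0$; the candidate barrier is
\[
\Phi(r,a) = r^{\beta}\cos\bigl(\omega(A-a)\bigr) + K.
\]
The three properties required are: $\Phi > 0$ throughout $\overline\Omega$ (automatic since $\omega A < \pi/2$ makes $\cos(\omega(A-a))\ge 0$ and $K>0$); the Neumann identity $D_{\vn}\Phi = 0$ on $\Gamma_v$ (automatic from $\partial_a\Phi|_{a=A}=0$); and the strict supersolution property $L\Phi < 0$ in $\Omega$.

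The key computation is that in polar coordinates $\Delta\bigl[r^{\beta}\cos(\omega(A-a))\bigr] = (\beta^2-\omega^2)r^{\beta-2}\cos(\omega(A-a))$, which is bounded above by $-c\,r^{\beta-2}$ with $c = (\omega^2-\beta^2)\cos(\omega A)>0$, precisely because $\omega > \beta$. The perturbations from $(a_{ij}-\delta_{ij})$ and the first-order drift $\vec b$ contribute terms of order $r^{\alpha+\beta-2}$ by (\ref{cc2}) and (\ref{cc3}), while the zero-order term $b_3\Phi$ contributes at worst $r^{\alpha-1}$ (from the $K$ piece) and $r^{\alpha+\beta-1}$ (from the $r^\beta$ piece). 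Since $\alpha>0$ and $\beta<1$, each perturbation is strictly subcritical with respect to the leading $r^{\beta-2}$ term, so the bound $L\Phi \le -\tfrac{c}{2}r^{\beta-2}$ holds on $\Omega$ once $r_0$ is smaller than a constant depending only on $C_E, \alpha, \lambda, \beta, \omega, K$.

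Next I would substitute $w = v\Phi$ and expand to get $Lw = \Phi\,\tilde L v + v\,L\Phi$, where $\tilde L v := a_{ij}v_{ij} + \tilde b_i v_i$ with $\tilde b_i := b_i + 2a_{ij}\Phi_j/\Phi$ has no zero-order term. The boundary data translate cleanly: $v\le 0$ on $\Gamma_h$, on $(\partial B_{r_0})\cap K$, and at the origin (using $w(0)\le 0$ and $\Phi(0)=K>0$); and on $\Gamma_v$ the identity $D_{\vn}w = \Phi\,D_{\vn}v + v\,D_{\vn}\Phi$ together with $D_{\vn}\Phi = 0$ yields $D_{\vn}v\le 0$. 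Moreover, wherever $v>0$, the identity rearranges to $\tilde L v = (Lw - v\,L\Phi)/\Phi \ge -v\,L\Phi/\Phi > 0$ strictly.

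I would finish by contradiction: if $\max_{\overline\Omega} v > 0$, this maximum must be attained in $\Omega$ or in the relative interior of $\Gamma_v$, since the rest of $\partial\Omega$ (and the origin) carries $v\le 0$. An interior maximum forces $a_{ij}v_{ij}(p_0)\le 0$ and $Dv(p_0)=0$, hence $\tilde L v(p_0)\le 0$, contradicting the strict positivity above. A maximum on $\Gamma_v^{\circ}$ triggers Hopf's lemma, applicable because $\tilde L$ has no zero-order term, $\tilde L v \ge 0$ near $p_0$, and the interior sphere condition holds on the smooth segment $\Gamma_v$ away from its endpoints (the endpoints lie on the outer arc or at the origin, both disqualified); this yields $D_{\vn}v(p_0)>0$, contradicting $D_{\vn}v\le 0$. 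The main obstacle is the simultaneous tuning of $\beta$, $\omega$, and $K$ so that $\Phi$ is positive on all of $\overline\Omega$ (needed so $v$ is defined and continuous up to the corner), satisfies the zero Neumann condition on $\Gamma_v$ (needed for the Hopf step), and remains a strict supersolution down to the singular corner where the drift blows up like $r^{\alpha-1}$; the ansatz above is designed precisely to meet all three constraints at once.
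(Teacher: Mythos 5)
Your argument is correct, but it follows a genuinely different route from the paper's. The paper proves this lemma by an Alexandrov--Bakelman--Pucci argument in the style of Section 9.1 of \cite{GT}: it restricts the upper contact set to supporting planes whose slopes satisfy $\nabla L\cdot(\cos\sigma,\sin\sigma)\le 0$, so that the Neumann condition $D_{\vn}w\le 0$ forces the contact set off $\Gamma_v$, and then runs the normal-map estimate; the singular coefficients enter only through $\int_\Omega|\vec b|^2$ and $\int_\Omega b_3^2$, which are finite and $O(r_0^{2\alpha})$ by (\ref{cc3}), and the resulting bound on $\sup w^+$ is absorbed once $r_0$ is small, yielding the explicit formula for $\epsilon_{A1}$. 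You instead build a positive supersolution $\Phi$ with exact zero Neumann data on $\Gamma_v$ and reduce to the classical strong maximum principle and Hopf lemma for the quotient $v=w/\Phi$, whose operator has no zero-order term; the computation $\Delta(r^\beta\cos(\omega(A-a)))=(\beta^2-\omega^2)r^{\beta-2}\cos(\omega(A-a))$ and the subcriticality of all perturbation terms are right. Two points deserve explicit care: (i) to obtain an $\epsilon_{A1}$ depending only on $C_E,\alpha,\lambda$ as stated, you must fix $\beta,\omega$ uniformly over the admissible angles $A\in(\tfrac{\pi}{2},\pi)$ --- e.g.\ $\omega=\tfrac14$, $\beta=\tfrac18$, so that $\cos(\omega A)\ge\cos(\tfrac{\pi}{4})$ and the constant $c$ does not degenerate as $\sigma\to\tfrac{\pi}{2}$; (ii) you only know $\tilde L v\ge 0$ where $v>0$, so the Hopf ball at $p_0\in\Gamma_v^{\circ}$ must be taken inside the neighborhood where $v>\tfrac12 v(p_0)$ (and away from the origin, where $\tilde b_i$ becomes bounded), and the strict inequality $v<v(p_0)$ inside that ball follows because an interior point with $v=v(p_0)$ is excluded by your first case. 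With those details supplied the proof closes. As for what each approach buys: the paper's contact-set method gives a quantitative $\epsilon_{A1}$ and is the same machinery reused in Lemma \ref{lem:BoundaryKrylovSafonov}, while your barrier argument is more elementary and is very much in the spirit of the explicit sub/supersolutions the paper itself constructs in Lemmas \ref{Lemma A2} and \ref{lem:singularityestimate}.
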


\begin{proof}

Like in section 9.1 of \cite{GT}, we define upper contact set $\Gamma^+$ and normal map $\chi$, with more restriction:
\begin{align*}
\Gamma^+=&\{(x,y)\in B_1(0,H)|\exists L: \text{linear function, graph of $L$ upper contacts with}\\ &\text{graph of $w^+$ at $(x,y)$}, 
                        \nabla L\cdot(\cos\sigma,\sin\sigma)\leq0\}\\
\chi(x,y)=&\{\nabla L\mid \text{graph of $L$ upper contacts with graph of $w^+$ at $(x,y)$},\nabla L\cdot(\cos\sigma,\sin\sigma)<0\}
\end{align*}

 It means  $\Gamma^+\cap\Gamma_{v}=\O $.
 We have:\\
\begin{align}
\sup w^+\leq2r_0k\left(\exp^{\frac{4\int_{\Omega} |\vec b|^2+\frac{c^2(w^+)^2}{k^2}}{\lambda^2 \pi}}-1\right)^{1 \over 2},
\end{align}
in above we denote:\[ k=(2\int_{ \Omega} c^2(w^+)^2)^{1 \over 2}.\]
Then if 
\[
2r_0\left(\exp^{\frac{(4\int_{\Omega} |\vec b|^2)+2}{\lambda^2 \pi}}-1\right)^{1 \over 2}(2\int_{\Omega} c^2)^{1 \over 2}<{1 \over 2},
\]
we have $w\leq 0$.

So we can choose:
\[
\epsilon_{A1}=\min\left\{\frac{\sqrt{\alpha}}{8C_E\sqrt{\pi}}\left(\exp^{\frac{4}{\pi\lambda^2}\left(1+\frac{\pi}{\alpha}C_E^2\right)}-1\right)^{-\frac{1}{2}},{1 \over e^4}\right\}.
\]
\end{proof}

\subsection{Gradient Estimates near the Corner}

\begin{lemma}\label{Lemma A2}
There exists $\epsilon_{A2}$ depends on $C_E, \Lambda, \lambda,\alpha$.
Given $r_0< \min\{\epsilon_{A1},\epsilon_{A2}\}$, and $h$ satisfies:
\begin{align}
h\in C^0(\overline \Omega)\cap C^1(\Omega \cup \Gamma_v\setminus(0,0))\cap C^2( \Omega),\\
a_{ij}h_{ij}+b_1h_x+b_2 h_y +b_3h=0, \text{ in }\Omega,\\
h=0,                  \text{ on }\Gamma_h,\\
h_{\vn}=0,\text{ on }  \Gamma_v .
\end{align}

Then there exists an $r_{A2}<r_0$, which depends on $C_E, \Lambda,\lambda,r_0, \alpha$, s.t. on $\{\eta=0\}\cap B_{r_{A2}}$
\begin{equation}
|h_y|\leq C(C_E,\alpha,\lambda,\Lambda,r_0)\cdot r^{\frac{\alpha}{2}-1}|h|_{0;B_{r_0}}
\end{equation}\end{lemma}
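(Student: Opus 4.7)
The plan is to (i) bound $|h|$ near the corner by $C|h|_{0;B_{r_0}}\,r^{\alpha/2}$ via an explicit power-type barrier together with the maximum principle established in Lemma~\ref{lem:maximumprinciple}, and then (ii) convert this $L^\infty$ decay into the claimed pointwise bound on $h_y$ along $\Gamma_h$ by a standard rescaling and boundary gradient estimate.

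For step (i) I will take the barrier $\phi(r,a)=M r^{\alpha/2}\cos(\nu(A-a))$ with $\nu=\alpha/2+\varepsilon$ for a small $\varepsilon>0$ chosen (possibly after shrinking $r_0$) so that $\nu A<\pi/2$, and set $M=C_0|h|_{0;B_{r_0}}$ with $C_0$ large so that $\phi\ge |h|_{0;B_{r_0}}$ on $(\partial B_{r_0})\cap K$.  Then $\phi>0$ in $\overline\Omega$, $\phi|_{a=0}=M r^{\alpha/2}\cos(\nu A)\ge 0 =h|_{\Gamma_h}$, and $\partial_a\phi|_{a=A}=0$ so $D_{\vn}\phi=0$ on $\Gamma_v$.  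A direct computation in polar coordinates gives $\Delta\phi=r^{\alpha/2-2}\bigl[(\alpha/2)^2-\nu^2\bigr]\cos(\nu(A-a))$, which is strictly negative of order $r^{\alpha/2-2}$, while the perturbation terms obey $|(a_{ij}-\delta_{ij})\phi_{ij}|+|\vec b\cdot\nabla\phi|+|b_3\phi|\le C\,M\,r^{\alpha/2+\alpha-2}$.  Shrinking $r_{A2}$ to absorb the perturbation makes $L\phi\le 0$ in $\Omega\cap B_{r_{A2}}$.  Applying Lemma~\ref{lem:maximumprinciple} to $\pm h-\phi$ then yields $|h(r,a)|\le \phi \le C|h|_{0;B_{r_0}}\,r^{\alpha/2}$ in $\Omega\cap B_{r_{A2}}$.

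For step (ii) I will fix a point $p=(x_0,0)\in\Gamma_h$ with $0<x_0<r_{A2}/2$ and work on the half-disk $B_{x_0/4}(p)\cap\{y\ge 0\}$, which stays away from both $\Gamma_v$ and the corner.  Rescale by $\tilde h(X,Y)=h\bigl(x_0+(x_0/4)X,(x_0/4)Y\bigr)$; the rescaled equation is uniformly elliptic in $B_1(0)\cap\{Y\ge0\}$ with coefficients satisfying $|\tilde a_{ij}-\delta_{ij}|+|\tilde b_i|\le C_E x_0^\alpha$ (in particular uniformly bounded), and $\tilde h=0$ on the flat part of $\partial B_1$.  A standard boundary gradient estimate (a local version of Theorem~6.6 of \cite{GT}, or perturbation of the explicit Poisson-kernel bound for harmonic functions in the half-plane) gives $|\nabla\tilde h(0,0)|\le C\,\|\tilde h\|_{L^\infty(B_1)}\le C|h|_{0;B_{r_0}}\,x_0^{\alpha/2}$ by the bound of step (i).  Undoing the scaling produces $|h_y(p)|\le C|h|_{0;B_{r_0}}\,x_0^{\alpha/2-1}$, and since $r=x_0$ on $\Gamma_h$ this is exactly the claimed estimate on $\Gamma_h\cap B_{r_{A2}/2}$.

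The main obstacle is calibrating the barrier: the target exponent $\alpha/2$ is generally strictly smaller than the Laplacian's natural critical exponent $\pi/(2A)$ for this mixed problem, so the principal part of $L\phi$ can be made strictly negative by a margin independent of $\sigma$ (by picking $\nu$ just above $\alpha/2$), but one must still verify that this margin dominates the singular lower-order perturbation uniformly in $r$, which is what drives the choice of $r_{A2}$ and the final constant.  Once the $L^\infty$ decay of step (i) is in hand, the gradient step is essentially interior Schauder for a flat-boundary problem.
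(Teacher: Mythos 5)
Your argument is correct, but it takes a genuinely different route from the paper's. The paper first \emph{desingularizes the coefficients} by the conformal map $z\mapsto z^{\Theta/A}$ with $\Theta=\frac{\alpha}{2}A$, which converts the singular bounds $|b_k|\le C_E r^{\alpha-1}$ into bounded coefficients on a narrower wedge; it then slides an explicit exponential bump supersolution $w^t$ along the image of $\Gamma_h$, applies Lemma \ref{lem:maximumprinciple} to conclude $h\le \Phi^*(w^t)$, and reads the bound $t^{\Theta/A-1}=t^{\alpha/2-1}$ for $h_y$ directly off $\nabla w^t$ at the contact point, with no separate gradient step. You instead stay in the original wedge, use the separated-variables barrier $Mr^{\alpha/2}\cos(\nu(A-a))$ together with Lemma \ref{lem:maximumprinciple} to get the $L^\infty$ decay $|h|\le C|h|_{0;B_{r_0}}r^{\alpha/2}$, and then convert decay into a derivative bound by rescaling to a unit half-disk centered at each point of $\Gamma_h$. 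Both proofs rest on Lemma \ref{lem:maximumprinciple} and produce the same exponent; yours avoids verifying the transformation formulas for $\tilde a_{ij},\tilde b_k$, at the cost of a second (rescaling) step. The calibration you worry about is fine: one needs $\nu A<\frac{\pi}{2}$ with $\nu$ slightly above $\frac{\alpha}{2}$, and since $A=\frac{\pi}{2}+\sigma<\pi$ and $\alpha<1$ one has $\frac{\alpha}{2}A<\frac{\pi}{2}$ with a margin depending only on $\alpha$, so $\varepsilon$ and the constants do not degenerate (this is the exact analogue of the paper's choice $\Theta=\frac{\alpha}{2}A$); note that shrinking $r_0$ plays no role in this choice, only in absorbing the lower-order perturbations. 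Two points to tighten: in step (ii) the rescaled coefficients are only bounded and $L^\infty$-close to $\delta_{ij}$, not H\"older continuous, so Theorem 6.6 of \cite{GT} does not literally apply; what you actually need is the elementary one-sided barrier estimate $|\tilde h(X,Y)|\le CY\,\|\tilde h\|_{\infty}$ for uniformly elliptic equations with bounded coefficients and zero Dirichlet data on a flat boundary portion, which suffices and controls the difference quotient defining $h_y$ on $\Gamma_h$ (the hypotheses only give $h\in C^1$ up to $\Gamma_v$, not up to $\Gamma_h$ --- an issue the paper's own proof shares).
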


\begin{proof}
\begin{figure}[h]
       \centering
    \includegraphics[width=15cm]{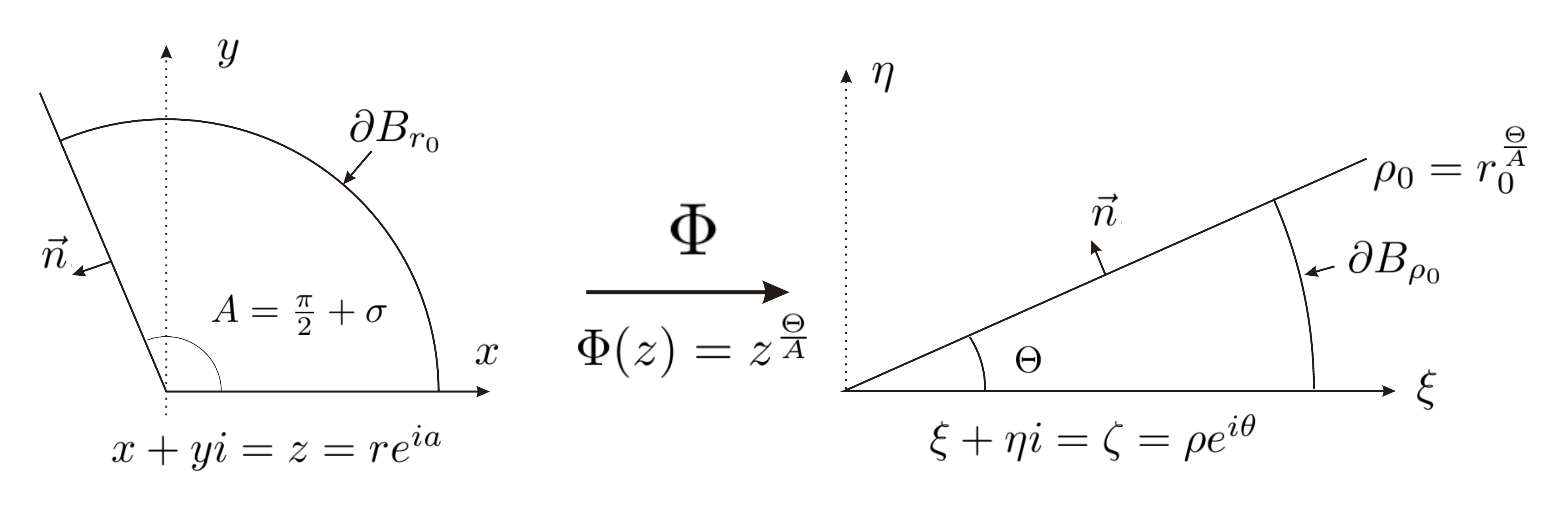}
\caption{Desingularization}
\end{figure}
Conformally map original wedge to a narrow wedge with top angle $\Theta$(to be determined), in new cone, we use coordinate $(\xi,\eta)=\rho(\cos \theta,\sin \theta)$. 

\begin{remark}
It's convenient to consider $a,r,\theta,\rho,a_{ij},b_k$ as functions on both sides, it means, for example on right-hand side $r(\xi,\eta)\triangleq r(\Phi^{-1}(\xi,\eta))$, and on left-hand side $\theta(x,y)\triangleq\theta(\Phi(x,y))$. So  on right-hand side we can still say $|b_i|\leq C_E r^{\alpha-1}$ and we have $r(\xi,\eta)=\rho^{A \over \Theta}(\xi,\eta)$.
\end{remark}

Under new coordinate, on right-hand side (we still denote $\Phi_* h$ by $h$) $h$ satisfies:
\begin{align}
\tilde a_{ij}h_{ij}+\tilde b_1 h_\xi +\tilde b_2h_\eta+\tilde b_3h=0,\text{ in } \Phi_*(\Omega),\\
h_{\vn}=0, \text{ on } \{\theta=\Theta\}\cap\overline{\Phi_*(\Omega)},\\
h=0, \text{ on } \{\eta=0\}\cap\overline{\Phi_*(\Omega)}.
\end{align}
To describe $\tilde a_{ij}$, $\tilde b_k$ we define:
\[
P=\left(\begin{array}{cc}
         \cos[a({\Theta\over A}-1)]&\sin[a({\Theta \over A}-1)]\\
         -\sin[a({\Theta\over A}-1)]&\cos[a({\Theta \over A}-1)]
        \end{array}      
\right)\]
then:
\begin{align}
(\tilde a_{ij})=({\Theta \over A})^2P^T(a_{ij})P\\
\tilde b_1={\Theta \over A}({\Theta\over A}-1)\{\cos[a({\Theta\over A}-2)]a_{11}-2\sin[a({\Theta\over A}-2)]a_{12}-\cos[a({\Theta\over A}-2)]a_{22}\}r^{-\frac{\Theta}{A}}\nonumber\\
 +{\Theta \over A}r^{1-{\Theta\over A}}(b_1 \cos[a({\Theta \over A}-1)]-b_2\sin[a({\Theta \over A}-1)])\\
\tilde b_2={\Theta \over A}({\Theta\over A}-1)\{\sin[a({\Theta\over A}-2)]a_{11}+2\cos[a({\Theta\over A}-2)]a_{12}-\sin[a({\Theta\over A}-2)]a_{22}\}r^{-\frac{\Theta}{A}}\nonumber\\
 +{\Theta \over A}r^{1-{\Theta\over A}}(b_1 \sin[a({\Theta \over A}-1)]+b_2\cos[a({\Theta \over A}-1)])\\
\tilde b_3=b_3\cdot r^{2-{2\Theta\over A}}.
\end{align}
So if we choose $\Theta={\alpha \over 2}A$, then $\tilde b_k$ are bounded, specifically:
\begin{align}
{\lambda \alpha^2 \over 4}\leq(\tilde a_{ij})\leq{\Lambda \alpha^2\over 4},\\
|\tilde b_i|\leq C(C_E,\alpha).
\end{align}
For any $0<t<{\rho_0\over 2}$, construct:
\[
w^t=M[1-e^{-\gamma{[(\xi-t)^2+(\eta+1)^2-1}]}]\cdot \sup|h|
\] 
In $\Phi(\Omega)$:
\begin{align*}
          &\tilde a_{ij}w^t_{ij}+\tilde b_1w^t_\xi+\tilde b_2w^t_\eta+\tilde b_3w^t\\
\leq&M\sup|h|e^{-\gamma[((\xi-t)^2+(\eta+1)^2)-1]}[-4\gamma^2\lambda+4\Lambda\gamma+4\gamma C(C_E,\alpha)+12 \gamma C(C_E,\alpha)e^{9\gamma}r_0]\\
\leq&0,
\end{align*}
 if we choose  $  r_0\leq \frac{e^{-9\gamma}}{12\gamma C(C_E,\alpha)}\triangleq\epsilon_{A2},$ and $\lambda$ large enough s.t. $ -4\gamma^2\lambda+4\Lambda\gamma+4 C(C_E,\alpha)\gamma+1<0$.

So if $M, \ \gamma$ big enough(depending on $C_E,\ \Lambda,\ \lambda,\ \rho_0$ and $\alpha$) :
\begin{align}
\tilde a_{ij}w^t_{ij}+\tilde b_1w^t_x+\tilde b_2w^t_y+\tilde b_3w^t\leq 0 \text{ in }\Phi(\Omega),\\
\partial_{\vn}w^t\geq 0 \text{, on } \{ \theta= \Theta\}\\
w^t\geq h \text{, on } \partial B_{\rho_0}\cup\{\eta=0\}.
\end{align}
And,
\[|\nabla w^t|(t,0)\leq C(C_E,\lambda,\Lambda,\alpha,\rho_0)|h|_{0;\Phi(\Omega)},\,\, \text{for $0<t<{\rho_0\over 2}$}.
\]
Transforming above back to original domain $\Omega$, gives
\[
|\partial_y \Phi^*(w^t)|(t,0)\leq t^{{\Theta\over A}-1}C(C_E,\lambda,\Lambda,\alpha,r_0)|h|_{0;\Omega},\,\, \text{for $0<t<{2^{-{A \over\Theta}}r_0 }$}.
\]
Then by Lemma \ref{lem:maximumprinciple},  $h\leq \Phi^*(w^t)$ for any $t$($\ 0<t<{ 2^{-{A \over\Theta}}r_0}$),  on $\{0<x<{ 2^{-{A \over\Theta} }r_0},y=0\}$:
  \begin{equation}
 h_y(t,0)\leq t^{{\Theta\over A}-1}C(C_E,\lambda,\Lambda,\alpha,r_0)|h|_{0;\Omega}.
   \end{equation}
Apply process above to $-h$, we get Lemma \ref{Lemma A2}
\end{proof}

\subsection{Singularity Estimates}
\label{SingularityEstimate}

\begin{lemma}\label{lem:singularityestimate}
Suppose $r_0\leq\epsilon_{A1}(\lambda,C_E,\alpha)$, and given $h\in C^1(\overline\Omega )\cap C^2(\Omega\cup\Gamma_v\setminus(0,0)),$
\begin{align}
a_{ij}h_{ij}+b_1h_x+b_2h_y+b_3h=0,&\text{ in }\Omega{\label{equationofhsigularitylemma}},\\
h=0              ,         &\text{ on }\Gamma_h,{\label{dirichletconditionofhsingularitylemma}}\\
h_{\vn}=0        ,      &\text{ on }  \Gamma_v,{\label{neumannconditionofhsingularitylemma}}\\
h\geq y-\varepsilon      ,    &\text{ on }\partial B_{r_0}\cap K{\label{supportconditionofhsingularitylemma}},
\end{align}
then exists $\epsilon_{A3}(r_0,\lambda,\Lambda,C_E,\alpha)$, if $\varepsilon\leq\epsilon_{A3}$, $h$ cannot be in $C^{1,\alpha}(\Omega\cup(0,0))$.
\end{lemma}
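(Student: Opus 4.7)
The key structural fact is that a linear second-order elliptic operator on a convex sector $K$ of opening angle $A = \pi/2 + \sigma > \pi/2$ with Dirichlet data on $\Gamma_h$ and Neumann data on $\Gamma_v$ admits a singular separated-variables eigenmode $r^{\nu}\sin(\nu a)$ with critical exponent $\nu = \pi/(\pi + 2\sigma) \in (1/2, 1)$. Since $\nu < 1$, this mode is incompatible with any $C^{1,\alpha}$ regularity at the vertex, because the two boundary conditions together will force the Taylor expansion of an allegedly $C^{1,\alpha}$ solution to start at order $r^{1+\alpha}$. The plan is therefore to produce a subsolution barrier $\underline v$ capturing this singular behaviour, combine it with the hypothesis $h \ge y - \varepsilon$ via Lemma \ref{lem:maximumprinciple} to obtain $h \gtrsim r^{\nu+\tau}$ on some interior ray for a small $\tau > 0$, and compare to the $O(r^{1+\alpha})$ upper bound forced by the assumed regularity.

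I would pick small $\tau > 0$ and try the candidate $\underline v(r,a) = c\, r^{\nu+\tau}\sin(\nu a)$. Since $\nu(\pi/2+\sigma) = \pi/2$, this satisfies $\underline v = 0$ on $\Gamma_h$ and $D_{\vn}\underline v = 0$ on $\Gamma_v$, and a direct computation gives $\Delta\underline v = c\,\tau(2\nu+\tau)\, r^{\nu+\tau-2}\sin(\nu a)$. The perturbation controlled by (\ref{cc2}) and (\ref{cc3}) satisfies $|L\underline v - \Delta\underline v| \le C(\lambda,\Lambda,C_E)\, r^{\nu+\tau+\alpha - 2}$, so for $r_0$ below a threshold depending on $\tau, \alpha, \lambda, \Lambda, C_E$ the positive main term dominates, at least away from $\Gamma_h$. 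I would then calibrate $c$ so that $\underline v \le y/2$ on $\partial B_{r_0}\cap K$ wherever $y \ge 2\varepsilon$; on the remaining strip near $(r_0,0)$, an additive shift of size $C\varepsilon$ compensates for the possible negativity of $y - \varepsilon$. Applying Lemma \ref{lem:maximumprinciple} to $\underline v - h - C\varepsilon$ then yields $h \ge \underline v - C\varepsilon$ throughout $\Omega$.

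If $h$ were in $C^{1,\alpha}(\Omega \cup \{(0,0)\})$, the identity $h(0,0) = 0$ would follow from continuity together with (\ref{dirichletconditionofhsingularitylemma}); the Dirichlet condition on $\Gamma_h$ would force $\partial_x h(0,0) = 0$, and the oblique Neumann condition (\ref{neumannconditionofhsingularitylemma}) together with $\sin\sigma > 0$ would force $\partial_y h(0,0) = 0$. Hence $|h(x,y)| \le C\, r^{1+\alpha}$ near the vertex. On the ray $a = a_* \in (0, \pi/2 + \sigma)$, however, the lower bound gives $h(r,a_*) \ge (c/2)\sin(\nu a_*)\, r^{\nu+\tau} - C\varepsilon$. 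Picking $\tau$ with $\nu + \tau < 1 + \alpha$ (possible because $\nu < 1$) and then $\varepsilon < \epsilon_{A3}$ small enough that the $\varepsilon$-correction is absorbed at some $r \in (0, r_0/2)$ yields $C r^{1+\alpha} \ge (c/4)\, r^{\nu+\tau}$, which fails as $r \to 0$: the required contradiction.

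The principal obstacle is enforcing $L\underline v \ge 0$ uniformly up to the Dirichlet face $\Gamma_h$, where $\sin(\nu a)$ degenerates linearly in $a$ and is thus of the same order as the coefficient errors appearing in (\ref{cc2}), (\ref{cc3}). My plan is to resolve this either by adding an angular correction of the form $-Mc\, r^{\nu+\tau+\alpha/2}\, p(a)$ respecting both boundary conditions, or by splitting $\Omega$ into a thin collar $\{a \le c_1\}$ (where a Hopf-type comparison using the Dirichlet condition directly produces the needed bound $h \ge c_2 \, y$) and its complement (where the main barrier applies). This calibration, together with the requirement that $\underline v$ lie below $y - \varepsilon$ on the outer arc, is exactly what dictates the dependence of $\epsilon_{A3}$ on $r_0, \lambda, \Lambda, C_E, \alpha$ stated in the lemma.
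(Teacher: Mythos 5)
Your overall architecture coincides with the paper's: build an explicit subsolution that grows faster than $r^{1+\alpha}$ at the vertex, push it under $h$ with Lemma \ref{lem:maximumprinciple}, and contradict the bound $|h|\le \frac{1}{\alpha}|h|_{1,\alpha}r^{1+\alpha}$ that follows from $h=\nabla h=0$ at the corner. The difference is the barrier, and that is exactly where your proposal has a genuine gap. With $\underline v=c\,r^{\nu+\tau}\sin(\nu a)$ the forcing $c\,\tau(2\nu+\tau)r^{\nu+\tau-2}\sin(\nu a)$ degenerates linearly in $a$ on approach to $\Gamma_h$, while the errors produced by $|a_{ij}-\delta_{ij}|\le C_Er^{\alpha}$ and $|\vec b|\le C_Er^{\alpha-1}$ are of size $c\,r^{\nu+\tau+\alpha-2}$ uniformly in $a$ (the angular derivative of $\underline v$ does \emph{not} vanish at $a=0$). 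So $L\underline v\ge 0$ fails on the cusp region $\{a\lesssim r^{\alpha}/\tau\}$, and the comparison argument cannot be run there. You correctly identify this as the principal obstacle, but neither of your fixes is carried out, and neither is routine as stated: for the additive correction $-Mc\,r^{\nu+\tau+\alpha/2}p(a)$ one needs a profile with $p(0)>0$, $p'(A)\ge 0$ (to keep $D_{\vn}w\le 0$ on $\Gamma_v$), $(\nu+\tau+\alpha/2)^2p+p''\le -c_0<0$, and enough control on $\min p$ to preserve $w\le y-\varepsilon$ on the outer arc; the obvious quadratic or cosine choices fail these sign constraints simultaneously because $(\nu+\tau+\alpha/2)A>\pi/2$. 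The collar alternative is circular as written: a ``Hopf-type'' bound $h\ge c_2y$ valid up to the corner is stronger than what the eigenfunction barrier delivers and would itself require a barrier construction of exactly the kind at issue.

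For comparison, the paper sidesteps the degeneracy entirely by taking $w=\frac{y}{(-\log y)^{\gamma}}-\mu x^{2}$. Its second $y$-derivative, $\frac{\gamma}{y(-\log y)^{\gamma+1}}+\frac{\gamma(\gamma+1)}{y(-\log y)^{\gamma+2}}$, \emph{blows up} as $y\to 0^+$, so the positive main term strengthens rather than weakens near $\Gamma_h$; after bounding $r^{\alpha-1}\le y^{\alpha-1}$ and $r^{\alpha-1}\le |x|^{\alpha-1}$, all coefficient singularities are absorbed by choosing $\gamma$ larger than the constants $C_1,C_2$ and $\mu$ small, with no angular analysis at all. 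The resulting lower bound $h\ge \frac{y}{(-\log y)^{\gamma}}-\mu x^2$ defeats $Cr^{1+\alpha}$ for every $\alpha>0$ at once. If you want to salvage the eigenfunction route you must actually exhibit the corrector $p$ (or an equivalent angular ODE barrier) and verify the four constraints above; as submitted, the subsolution property of your barrier is unproved precisely where the hypothesis $|\vec b|\le C_Er^{\alpha-1}$ bites hardest.
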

\begin{proof}
In the following, we construct a subsolution, $w$, to system (\ref{equationofhsigularitylemma}) (\ref{dirichletconditionofhsingularitylemma})
(\ref{neumannconditionofhsingularitylemma}) and (\ref{supportconditionofhsingularitylemma}), which has growth rate $\frac{r}{(-\log r)^{\gamma}}$ along some direction near corner of wedge.

Since $h$ satisfies $h=\nabla h=0$ at corner of wedge, so if $h\in C^{1,\alpha}(\Omega\cup(0,0))$, we have in $\Omega$,
\[|\nabla h|\leq |h|_{1,\alpha}r^{\alpha}, \ |h|\leq\frac{1}{\alpha}|h|_{1,\alpha}r^{\alpha+1},\]
which contradicts with $h\geq w$.

We define
\[w\triangleq \frac{y}{(-\log y)^{\gamma}}-\mu x^2.\]
In $\Omega$,
\begin{align*}
        &a_{ij}w_{ij}+b_1w_x+b_2w_y+b_3w\\
\geq&\lambda\left(\frac{y}{(-\log y)^\gamma}\right)_{yy}-2\Lambda\mu-2C_Er^{\alpha-1}\mu|x|-C_Er^{\alpha-1}\left(\frac{y}{(-\log y)^\gamma}\right)_{y}-C_Er^{\alpha-1}\left(\frac{y}{(-\log y)^\gamma}+\mu x^2\right).
\end{align*}
Plug in the following
\[\left(\frac{y}{(-\log y)^\gamma}\right)_{y}=\frac{1}{(-\log y)^\gamma}+\frac{\gamma}{(-\log y)^{\gamma+1}},\]
\[\left(\frac{y}{(-\log y)^\gamma}\right)_{yy}=\frac{\gamma}{y(-\log y)^{\gamma+1}}+\frac{\gamma(\gamma+1)}{y(-\log y)^{\gamma+2}},\]
\[-r^{\alpha-1}\geq -y^{\alpha-1},\ -r^{\alpha-1}\geq-|x|^{\alpha-1},\ |x|\leq r_0, y\leq r_0,\]
we get 
\begin{align*}
	&a_{ij}w_{ij}+b_1w_x+b_2w_y+b_3w\\
\geq &\frac{\lambda\gamma}{y(-\log y)^{\gamma+1}}-2\Lambda\mu-2C_Er^\alpha_0\mu-C_E\mu r_0^{\alpha+1}\\
	&+\frac{1}{y(-\log y)^{\gamma+2}}\left(\lambda\gamma(\gamma+1)-C_E y^{\alpha}(-\log y)^2-C_E\gamma y^\alpha(-\log y)-C_E y^{\alpha+1}(-\log y)^2\right).
\end{align*}
Define 
\[C_1={1\over \lambda}\sup\{{2C_Ey^\alpha(-\log y)}\mid 0<y<r_0\},\]
\[C_2={1\over \lambda}\sup\{C_E(y^\alpha(-\log y)^2+y^{\alpha+1}(-\log y)^2)\mid 0<y<r_0\}.\]
And let
\[\gamma=\max\{C_1,C_2\}+1,\]
\[\mu=\frac{1}{2\Lambda+3C_E+1}\inf\{\frac{\lambda \gamma}{y(-\log y)^{\gamma+1}}\mid 0<y<r_0\}.\]
Then
\[a_{ij}w_{ij}+b_1w_x+b_2w_y+b_3w\geq0.\]

Recall that $r_0\leq e^{1\over 4}$, and note that $\gamma>1$, so
\[\frac{1}{(-\log r_0)^{\gamma}}\leq {1\over 4}.\]

On $\{\xi\geq\frac{r_0}{2}\},$
\[w\leq y-\mu\xi^2\leq y-\frac{r_0^2}{4}\mu,\]
 if $\varepsilon\leq \frac{r_0^2}{4}\mu$, then $w\leq y-\varepsilon$.

On $\{\xi\leq\frac{r_0}{2}\},$
\[w\leq {y\over 2}-\mu\xi^2\leq {y\over 2}=y-{y\over 2}\leq y-\frac{r_0}{4},\]
if $\varepsilon\leq \frac{r_0}{4}$, then $w\leq y-\varepsilon$.

And since $\frac{y}{(-\log y)^\gamma}$ is an increasing function of $y$ and note that $\vn$ is outer normal direction of $\Omega$, we have
\[w_{\vn}\leq 0 \text{, on }\Gamma_v.\]

So, we define $\epsilon_3=\min\{\frac{r_0^2\mu}{4}, \frac{r_0}{2}\}$. When $\varepsilon\leq \epsilon_3$, $w<h$, which gives a contradiction.

\end{proof}


\section{Boundary Krylov-Safonov Estimates}
\label{sec:boundaryKrylovSafonov}

In this section we prove the following generalization of Krylov-Safonov estimates:

\begin{lemma}
\label{lem:BoundaryKrylovSafonov}


Consider part of a ball $\Omega=B_2((0,0))\cap\{y>f(x)\}$, where $f$ is a Lipschitz function of $x$, and $|f|_{Lip}\leq\epsilon$ (we denote  $\overline\Omega\cap\{f(x)=y\}$ as $L$).Given $h\in C^1(\Omega\cup L)\cap C^2(\Omega)$, satisfying
\begin{itemize}
\item
$a_{ij}h_{ij}+b_1h_x+b_2h_y+b_3h=0\text{, in }\Omega, 
   \text{where, } \lambda\leq (a_{ij})\leq\Lambda,   |b_i|\leq C_E;$
\item    $   \int_{\Omega}{h^+}^2\leq\delta^2;$

\item
and $\exists\, C_\beta\geq 1$, at every point of $L$, one of the following two things happens:\\
1)$h\leq\delta$;\\
2)$\exists \vbeta=(\beta_1,\beta_2)$, $|\beta_1|\leq C_\beta$, $\beta_2\geq {1 \over C_\beta}$,
  $-D_\beta h+h\leq \delta$.

Then we have, there exist $\epsilon_K(C_\beta)$ and $r_K(C_\beta)$, if $\epsilon\leq\epsilon_K$ :
\[
h\leq C(\lambda,\Lambda,C_E,C_\beta)\delta \text{ in }B_{r_K}.
\]
\end{itemize}

\begin{figure} [h]
\centering
    \includegraphics[height=5cm]{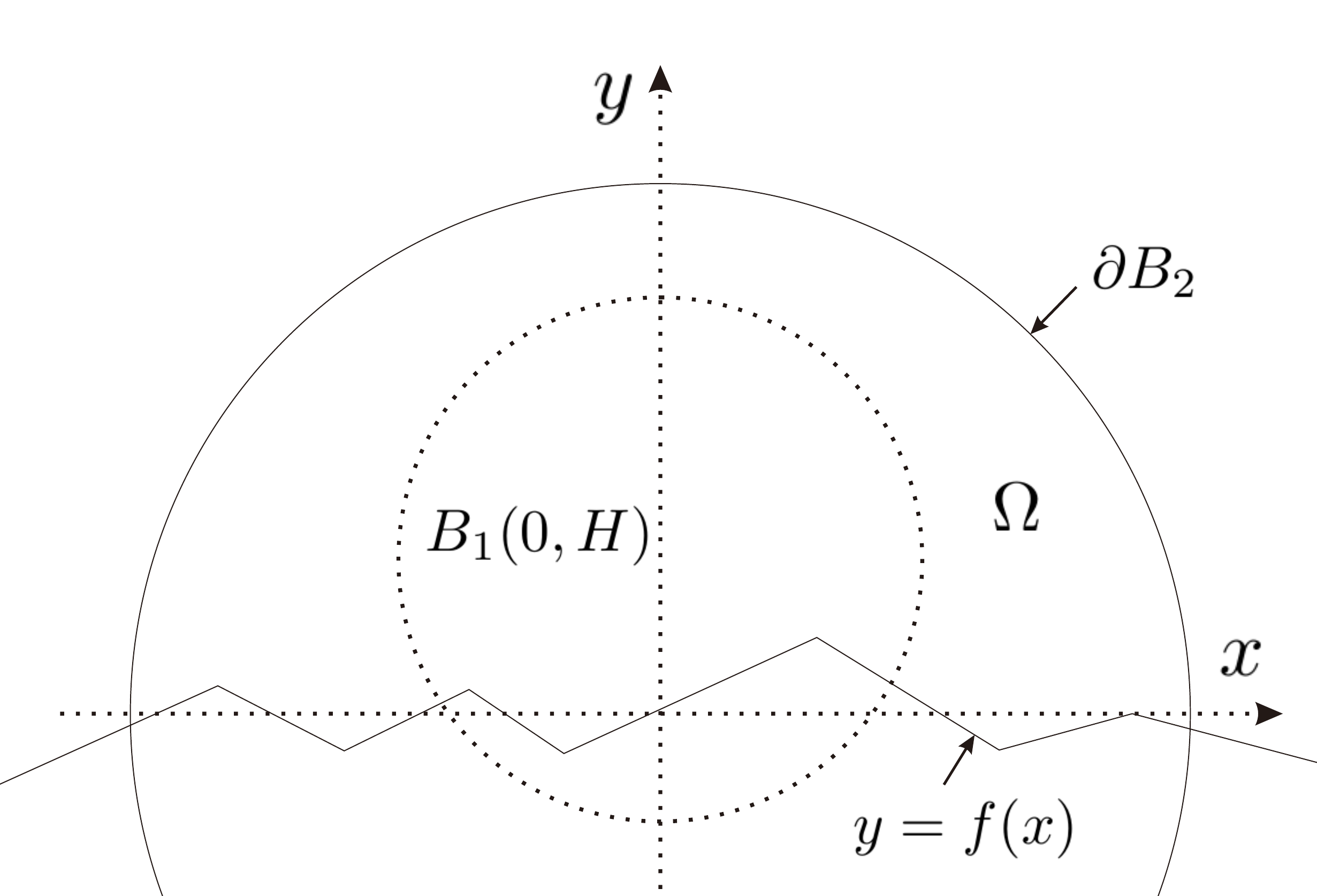}
\caption{Local Maximum Principle}
\end{figure}

\begin{proof}
Consider $g=1-x^2-(y-H)^2$, $w=(h-\delta)^+g^\gamma$, $H,\gamma$ to be determined later. Then, on $L$,
\begin{align*} 
D_\vbeta g&=D_\vbeta(-x^2-(y-H)^2)\\
                  &=-2x\beta_1+2(H-y)\beta_2\\
                  &\geq-2|x|C_\beta+2(H-y){1 \over C_\beta}\\    
                  &\geq{1\over 2 C_\beta}-2|x|C_\beta   \ \ \ \ (\text{here we need } H>{1\over 2} ,\epsilon <{1 \over 4})\\
                 &\geq 0\  (\text{     choose } H=1-{1 \over 64 C^4_\beta},\, \epsilon_K={1\over 64C_\beta^4})
\end{align*}
So on $\{h>\delta\}\cap L$,
\begin{align*}
w-D_\vbeta w&=(h-\delta)g^\gamma-D_\vbeta hg^\gamma-\gamma g^{\gamma-1}D_\vbeta g(h-\delta)\\
                     &=(h-D_\vbeta h-\delta)g^\gamma-\gamma g^{\gamma-1}D_\vbeta g(h-\delta)\\
                      &=-\gamma g^{\gamma-1}D_\vbeta g(h-\delta)\\
                     &\leq0
\end{align*}

Like section 9.1 of \cite{GT} we define upper contact set $\Gamma^+$ and normal map $\chi$, with more restriction:
\begin{align*}
\Gamma^+=&\{(x,y)\in B_1(0,H)|\exists L: \text{linear function, graph of $L$ upper contacts with graph of $w^+$ at $(x,y)$}, \\
                 &\nabla L\cdot(C_\beta,{1\over C_\beta})\leq0, \nabla L\cdot(-C_\beta,{1\over C_\beta})\leq0\},\\
\chi(x,y)=&\{\nabla L\mid \text{graph of $L$ upper contacts with graph of $w^+$ at $(x,y)$},\\
                  &\nabla L\cdot(C_\beta,{1\over C_\beta})\leq0, \nabla L\cdot(-C_\beta,{1\over C_\beta})\leq0\}.
\end{align*}

So at $p\in\Gamma^+\cap L $ if $w(p)>0$ by computation above, $D_\vbeta w>0$, $p\notin \Gamma^+$. It means if $w$ does not identically equal to zero, $\Gamma^+\cap L=\O $.
Then we have:
\begin{align}\label{KS}
\left(\arctan{1 \over C_\beta^2}\right){(\sup w^+)^2\over 2}\leq|\chi_{w^+}(\Gamma^+)|\leq\int_{\Gamma^+}\det D^2 w\leq\int_{\Gamma^+}
\frac{(a_{ij}w_{ij})^2}{\lambda^2}.
\end{align}

On $\Gamma^+,$
\begin{align*}
        a_{ij}w_{ij}
      =&a_{ij}h_{ij}g^\gamma+a_{ij}(g^\gamma)_{ij}(h-\delta)+a_{ij}h_i(g^\gamma)_j+a_{ij}h_j(g^\gamma)_i\\
        &+\sum_{k=1,2}(b_kh_k g^\gamma      +b_k(g^\gamma)_k(h-\delta)-b_kw_k)\\
\geq&-C(C_E,\Lambda,\gamma)(h^++\delta)-\Lambda\gamma|\nabla h|g^{\gamma-1}|\nabla g|-C_E|\nabla w|
\end{align*}

and 
  \[
|\nabla w|\leq \frac{w}{1-\sqrt{|x|^2+(y-H)^2}}\leq\frac{2w}{1-x^2-(y-H)^2}\leq2h g^{\gamma-1},
\]
which implies:
  \[
|\nabla h|\leq \frac{2hg^{\gamma-1}+(h-\delta)\gamma g^{\gamma-1}|\nabla g|}{g^{\gamma}}
,\]
so,
\[
|\nabla h||\nabla g|g^{\gamma-1}\leq2h|\nabla g|g^{\gamma-2}+(h-\delta)\gamma g^{\gamma-2}|\nabla g|^2.
\] 
Take $\gamma=2$ we get:
\[
a_{ij}w_{ij}\geq-C(C_E,\Lambda)(h^++\delta), \text{ on } \Gamma^+.
\]
Plug above into (\ref{KS}) we get:
  \[
(\sup w^+)^2\leq C(\lambda,\Lambda,C_E,C_\beta)\left(\int_\Omega h^2+\delta^2\right),
\]
so,
\[
\sup_{B_{r_K}}h\leq  C(\lambda,\Lambda,C_E,C_\beta)\delta,
\]
where we can take $r_K= \frac{1}{4 C_\beta^2}$.
\end{proof}
\end{lemma}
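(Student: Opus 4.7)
The plan is to prove this via an Alexandrov–Bakelman–Pucci (ABP) style argument adapted from Chapter 9 of Gilbarg–Trudinger, with a restricted upper contact set designed to handle the mixed Dirichlet/oblique boundary alternative on $L$. The starting point is the test function $g(x,y)=1-x^2-(y-H)^2$ with center $(0,H)$, $H<1$ chosen so that $B_1((0,H))$ lies inside $\Omega$ (which uses the Lipschitz bound $\epsilon\leq\epsilon_K$ on $f$) and, more importantly, so that $D_\vbeta g = -2x\beta_1+2(H-y)\beta_2\geq 0$ at every point $(x,y)\in L\cap\overline{B_1((0,H))}$ for every admissible $\vbeta=(\beta_1,\beta_2)$ with $|\beta_1|\leq C_\beta$, $\beta_2\geq 1/C_\beta$. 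A direct computation shows $H=1-(64C_\beta^4)^{-1}$ and $\epsilon_K=(64C_\beta^4)^{-1}$ work. Then set $w=(h-\delta)^+ g^\gamma$ for a $\gamma$ to be chosen.

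Next I would verify the boundary inequality $w - D_\vbeta w\leq 0$ on $L\cap\{h>\delta\}$: expanding,
\[
w-D_\vbeta w = (h-D_\vbeta h-\delta)g^\gamma - \gamma g^{\gamma-1}(h-\delta)D_\vbeta g,
\]
and both terms are $\leq 0$ by hypothesis and the choice of $H$. On $L\cap\{h\leq\delta\}$ we simply have $w=0$. Now define a restricted upper contact set $\Gamma^+$ and normal map $\chi$ as in Section 9.1 of Gilbarg–Trudinger but with the extra requirement that the supporting affine function $L$ satisfies $\nabla L\cdot(\pm C_\beta,1/C_\beta)\leq 0$. The geometric point is that any admissible $\vbeta$ lies strictly inside the open cone whose boundary rays are $(\pm C_\beta, 1/C_\beta)$, so a contact gradient in $\chi$ must have nonpositive inner product with every such $\vbeta$. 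Combined with $w-D_\vbeta w\leq 0$ and $w>0$ at any putative contact point on $L$, this yields a contradiction; hence $\Gamma^+\cap L=\emptyset$ unless $w\equiv 0$.

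With $\Gamma^+$ contained in the interior of $\Omega$, the ABP-type inequality reads
\[
\bigl(\arctan C_\beta^{-2}\bigr)\frac{(\sup w^+)^2}{2} \leq |\chi_{w^+}(\Gamma^+)| \leq \int_{\Gamma^+}\det D^2 w \leq \int_{\Gamma^+}\frac{(a_{ij}w_{ij})^2}{\lambda^2},
\]
where the leading factor $\arctan C_\beta^{-2}$ quantifies the measure of the cone of admissible gradients lying below height $\sup w^+$. Next I would expand $a_{ij}w_{ij}$ via Leibniz: writing $w=(h-\delta)g^\gamma$, the term $a_{ij}h_{ij}g^\gamma$ is rewritten using the PDE as $-(b_1 h_x+b_2 h_y+b_3 h)g^\gamma$, while the cross terms are controlled by $|\nabla h|\,|\nabla g|\,g^{\gamma-1}$ and $|\nabla g|^2 g^{\gamma-2}$. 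The observation $|\nabla w|\leq 2hg^{\gamma-1}/(1-|x|^2-(y-H)^2) = 2hg^{\gamma-2}$ on $\Gamma^+$ (a consequence of $w^+$ sitting below a linear upper contact plane) then gives $|\nabla h|\,|\nabla g|\,g^{\gamma-1}\leq C(h g^{\gamma-2}|\nabla g| + (h-\delta)\gamma g^{\gamma-2}|\nabla g|^2)$. Choosing $\gamma=2$ cancels the singular $g^{\gamma-2}$ factor and yields $a_{ij}w_{ij}\geq -C(\Lambda,C_E)(h^+ +\delta)$ on $\Gamma^+$. Plugging back:
\[
(\sup w^+)^2 \leq C(\lambda,\Lambda,C_E,C_\beta)\Bigl(\int_\Omega (h^+)^2 + \delta^2\Bigr)\leq C\delta^2,
\]
and taking $r_K=1/(4C_\beta^2)$ ensures $g\geq 1/2$ on $B_{r_K}$, delivering the conclusion $h\leq C\delta$.

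The main obstacle is calibrating the restricted contact set so that two competing demands are met simultaneously: (i) the restriction on contact-gradient directions must force $\Gamma^+\cap L=\emptyset$ using only the obliqueness data $(|\beta_1|\leq C_\beta,\beta_2\geq 1/C_\beta)$; (ii) the set of admissible gradients must still carry a definite two-dimensional measure, quantified by $\arctan C_\beta^{-2}$, so that the inverse ABP inequality $|\chi_{w^+}(\Gamma^+)|\gtrsim (\sup w^+)^2$ remains nondegenerate. The pair of rays $(\pm C_\beta, 1/C_\beta)$ is exactly the two-sided cone that contains every admissible $\vbeta$ and whose complement has the required angular measure; any less symmetric choice either fails to block $L$ or collapses the measure bound.
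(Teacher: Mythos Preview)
Your proposal is correct and follows essentially the same argument as the paper: the same cutoff $g=1-x^2-(y-H)^2$ with $H=1-(64C_\beta^4)^{-1}$, the same auxiliary function $w=(h-\delta)^+g^\gamma$, the same restricted upper contact set with gradient constraints $\nabla L\cdot(\pm C_\beta,1/C_\beta)\leq 0$, and the same ABP bound combined with $\gamma=2$ and $r_K=1/(4C_\beta^2)$. One small inaccuracy: you do not need (and do not have) $B_1((0,H))\subset\Omega$; what matters is only that $\Gamma^+\cap L=\emptyset$, which you correctly establish.
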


\section*{Acknowledgements}
The author would like to thank Mikhail Feldman for his patient tutorship in the past many years. He is also grateful to Guiqiang Chen, Xiuxiong Chen, Wei Xiang, Bin Xu, Bing Wang, Guohuan Qiu, Jiyuan Han, Jingrui Cheng for the supports and very helpful discussions.


\end{document}